\newtheorem*{rep@theorem}{\rep@title}
\newcommand{\newreptheorem}[2]{%
\newenvironment{rep#1}[1]{%
 \def\rep@title{#2 \ref{##1}}%
 \begin{rep@theorem}}%
 {\end{rep@theorem}}}
\newtheorem{thm}{Theorem}[section]
\newtheorem{Lemma}[thm]{Lemma}
\newtheorem{Proposition}[thm]{Proposition}
\newtheorem{Corollary}[thm]{Corollary}
\newtheorem*{genas}{General Assumption}
\theoremstyle{definition}
\newtheorem{Definition}[thm]{Definition}
\newtheorem{Remark}[thm]{Remark}
\newtheorem{Example}[thm]{Example}
\newtheorem{Construction}[thm]{Construction}
\title{Mustafin varieties, moduli spaces and tropical geometry}
\author{Marvin Anas Hahn}
\author{Binglin Li}
\newcommand{\Addresses}{{
  \bigskip
  \footnotesize

  Marvin Anas Hahn, \textsc{Department of Mathematics, University of T\"ubingen, 72076 Tübingen, Germany}\par\nopagebreak
  \textit{E-mail address}: \texttt{marvin-anas.hahn@uni-tuebingen.de}

  \medskip

  Binglin Li, \textsc{Department of Statistics, University of Georgia, GA 30602, USA}\par\nopagebreak
  \textit{E-mail address}: \texttt{binglinligeometry@uga.edu}

}}
\date{}
\keywords{Degenerations of projective spaces, tropical convexity, linked Grassmannians, tropical intersection theory, tropical linear spaces}
\subjclass[2010]{Primary: 14T05, 14D06, 14D20; Secondary: 20E42, 20G25, 52B99, 14G35}
\begin{document}
\begin{abstract}
Mustafin varieties are flat degenerations of projective spaces, induced by a choice of an $n-$tuple of lattices in a vector space over a non-archimedean field. They were introduced by Mustafin \cite{mustafin1978nonarchimedean} in the 70s in order to generalise Mumford's groundbreaking work on the unformisation of curves to higher dimension. These varieties have a rich combinatorial structure as can be seen in pioneering work of Cartwright, Häbich, Sturmfels and Werner \cite{cartwright2011mustafin}. In this paper, we introduce a new approach to Mustafin varieties in terms of images of rational maps, which were studied in \cite{Li13}. Applying tropical intersection theory and tropical convex hull computations, we use this method to give a new combinatorial description of the irreducible components of the special fibers of Mustafin varieties. This enables connections to various topics. In particular, we see that any multiview variety in the sense of \cite{zbMATH06249021} appears as an irreducible component of the special fiber of some Mustafin variety. Furthermore, we use an interpretation of Mustafin varieties as a moduli functor introduced by Faltings\cite{faltings2001toroidal} to relate them to certain moduli functors, called linked Grassmannians \cite{osserman2006limit}. These objects are featured in limit linear series theory. The focal point of study regarding linked Grassmannians are so-called \textit{simple points}. As a direct consequence of the new combinatorial description of Mustafin varieties, we prove that the simple points of linked Grassmannians are dense in every fiber. Finally, we use the connection to linked Grassmannians, to relate the special fibers of Mustafin varieties to certain local models of unitary Shimura varieties appearing in \cite{gortz2001flatness}.
\end{abstract}
\maketitle
\tableofcontents

\section{Introduction}
Mustafin varieties are flat degenerations of projective spaces induced by choosing an $n$-tuple of lattices in the Bruhat-Tits building $\mathfrak{B}_d$ associated to $\mathrm{PGL}(V)$ over a non-archimedean field $K$. These objects were introduced by Mustafin in \cite{mustafin1978nonarchimedean} in order to generalise Mumford's groundbreaking work on uniformisation of curves to higher dimensions \cite{zbMATH03362020}. Since then they have been repeatedly studied under the name \textit{Deligne schemes} (see e.g. \cite{faltings2001toroidal}, \cite{keel2006geometry}, \cite{cartwright2010hilbert}). By studying degenerations of projective spaces, we give a framework for the study of degeneations of projective subvarieties. In his original work, Mustafin studied the case of so-called \textit{convex point configurations} in $\mathfrak{B}_d$ as defined in Definition \ref{def:conv}. An approach to study arbitrary point configurations was developed in \cite{cartwright2011mustafin}, where the total space of this type of degenerations was named \textit{Mustafin variety} for the first time. There it was proved that if the lattices in the point configuration have diagonal form with respect to a common basis (i.e. they lie in the same apartment), the corresponding Mustafin variety is essentially a toric degeneration given by mixed subdivisions of a scaled simplex. These mixed subdivisions are beautiful combinatorial objects that are known to be equivalent to tropical polytopes and triangulations of products of simplices. For point configurations that do not obey this property some first structural results were proved.
In this paper, we give a new combinatorial description of the special fibers of Mustafin varieties, which yields a complete classification of the irreducible 
components of special fibers of Mustafin varieties. This allows a connection to multiview geometry. Moreover, we uncover a link between Mustafin varieties and so-called (pre)linked Grassmannians --- objects that show up in the theory of limit linear series. Using this connection, we further relate Mustafin varieties to the standard local model of Shimura varieties (as studied e.g. in \cite{gortz2001flatness}).\vspace{\baselineskip}

Let $R$ be a discrete valuation ring, $K$ the quotient field and $k$ the residue field. We fix a uniformiser $\pi$. As an example take $\mathbb{K}=\mathbb{C}((\pi))$ as the ring of formal Laurent series over $\mathbb{C}$ with discrete valuation $v(\sum_{n\ge l}a_n\pi^n)=l$ for $l\in\mathbb{Z}$ and $a_n\in\mathbb{C}$ with $a_l\neq0$. Then $R=\{\sum_{n\ge l}a_n\pi^n:k\in\mathbb{Z}_{\ge0}\}$ and $k=\mathbb{C}$. Moreover, let $V$ be vector space of dimension $d$ over $K$. We define $\mathbb{P}(V)=\mathrm{Proj}\mathrm{Sym}(V^*)$ as paramatrising lines through $V$. We call free $R-$modules $L\subset V$ of rank $d$ \textit{lattices} and define $\mathbb{P}(L)=\mathrm{Proj}\mathrm{Sym}(L^*)$, where $L^*=\mathrm{Hom}_R(L,R)$. Note, that we will only consider lattices up to homothety, i.e. $L\backsim L'$ if $L=c\cdot L'$ for some $c\in K^{\times}$.
\begin{Definition}[Mustafin varieties]
\label{def:musta}
 Let $\Gamma=\{L_1,\dots,L_n\}$ be a set of rank $d$ lattices in $V$. Then $\mathbb{P}(L_1),\dots,\mathbb{P}(L_n)$ are projective spaces over $R$ whose generic fibers are canonically isomorphic to $\mathbb{P}(V)\simeq\mathbb{P}^{d-1}_{\mathbb{K}}$. The open immersions $\mathbb{P}(V)\hookrightarrow\mathbb{P}(L_i)$ give rise to a map \[\mathbb{P}(V)\longrightarrow\mathbb{P}(L_1)\times_R\dots\times_R\mathbb{P}(L_n).\] We denote the closure of the image endowed with the reduced scheme structure by $\mathcal{M}(\Gamma)$. We call $\mathcal{M}(\Gamma)$ the \textit{associated Mustafin variety}. Its special fiber $\mathcal{M}(\Gamma)_k$ is a scheme over $k$.
\end{Definition}
While the generic fiber of such a scheme is isomorphic to $\mathbb{P}^{d-1}$, the special fiber has many interesting properties.\\
The main tool in this paper is the study of closures images of rational maps of the form \[f:\mathbb{P}(W)\dashrightarrow\mathbb{P}\left(\faktor{W}{W_1}\right)\times\cdots\times\mathbb{P}\left(\faktor{W}{W_n}\right),\] where $W$ is a vector space over $k$ of dimension $d$ and $(W_i)_{i\in[n]}$ is a tuple of sub-vector spaces $W_i\subset W$, such that $\bigcap W_i=\langle0\rangle$\cite{Li13}. We denote the closure of the above map by $X(W,W_1,\dots,W_n)$. \vspace{\baselineskip}

We make the following assumption for the rest of the paper.

\begin{genas}
The residue field $k$ is algebraically closed.
\end{genas}

We proceed as follows: Let $\Gamma$ be a point configuration in the Bruhat-Tits building $\mathfrak{B}_d$ associated to $\mathrm{PGL}(V)$. We consider the convex hull $\mathrm{conv}(\Gamma)$ (see Definition \ref{def:conv}), which is a set of lattices. To each lattice class $[L]\in\mathrm{conv}(\Gamma)$, we associate a variety $X_{\Gamma,[L]}$ of the form $X(k^d,W_1,\dots,W_n)$ for some $W_i$ depending on $[L]$ and $\Gamma$ (see Construction \ref{con:var}). Then we define two varieties in $\left(\mathbb{P}_{k}^{d-1}\right)^n$ as follows \[\widetilde{\mathcal{M}}(\Gamma)=\bigcup_{[L]\in\mathrm{conv}(\Gamma)}X_{[L]}\textrm{ and }\widetilde{\mathcal{M}}^r(\Gamma)=\bigcup_{[L]\in V(\mathrm{conv}(\Gamma))}X_{[L]},\]
where $V(\Gamma)$ is the set of polyhedral vertices of $\mathrm{conv}(\Gamma)$.
The following is one of the main results of this paper.
\begin{thm}
\label{thm:equ}
The irreducible components of Mustafin varieties are related to images of rational maps as follows:
\begin{enumerate}
\item If $\Gamma$ is a an arbitrary point configuration, we have \[\mathcal{M}(\Gamma)_k=\widetilde{\mathcal{M}}(\Gamma).\]
\item If $\Gamma$ is a point configuration in one apartment, we have \[\mathcal{M}(\Gamma)_k=\widetilde{\mathcal{M}}^r(\Gamma).\]
\end{enumerate}
\end{thm}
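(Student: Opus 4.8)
The plan is to prove both equalities as equalities of the underlying reduced closed subschemes of $\left(\mathbb{P}^{d-1}_k\right)^n$, by establishing the two inclusions, after recording two preliminary facts. First, $\mathcal{M}(\Gamma)$ is flat over $R$ with integral generic fibre $\mathbb{P}^{d-1}_K$ (this is classical, and also follows from the description of $\mathcal{M}(\Gamma)$ as the closure of $\mathbb{P}(V)$), so $\mathcal{M}(\Gamma)_k$ is pure of dimension $d-1$; since $\mathcal{M}(\Gamma)_k$ is reduced by definition, the local ring of $\mathcal{M}(\Gamma)$ at the generic point of any component of $\mathcal{M}(\Gamma)_k$ is a discrete valuation ring with uniformiser $\pi$. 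Dually, for $[L]\in\mathrm{conv}(\Gamma)$ the variety $X_{\Gamma,[L]}$ is the closure of the image of a rational map out of $\mathbb{P}^{d-1}_k$, hence irreducible, and of dimension $d-1$ because $\bigcap_i W_i=\langle0\rangle$ by Construction \ref{con:var}; thus $\widetilde{\mathcal{M}}(\Gamma)$ and $\widetilde{\mathcal{M}}^r(\Gamma)$ are pure of dimension $d-1$ as well. Second — and this is the computational heart — for any lattice $L$, choosing for each $i$ bases of $L$ and of the minimal lattice $\widehat{L_i}$ of the class $[L_i]$ containing $L$ that are adapted by the elementary divisor theorem, one reads off that the rational map $\mathbb{P}(L)_k\dashrightarrow\prod_i\mathbb{P}(L_i)_k$ obtained by reducing the open immersions $\mathbb{P}(V)\hookrightarrow\mathbb{P}(L_i)$ is, up to the induced linear embeddings $\mathbb{P}(L\otimes k/W_i)\hookrightarrow\mathbb{P}(L_i)_k$ with $W_i=\ker(L\otimes k\to\widehat{L_i}\otimes k)$, exactly the map defining $X(L\otimes k,W_1,\dots,W_n)$; consequently the closure of its image is $X_{\Gamma,[L]}$, which is how Construction \ref{con:var} gets matched to the geometry.

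\textbf{The inclusion $\widetilde{\mathcal{M}}(\Gamma)\subseteq\mathcal{M}(\Gamma)_k$.} Fix $[L]\in\mathrm{conv}(\Gamma)$. Resolving the rational map $\mathbb{P}(L)\dashrightarrow\prod_R\mathbb{P}(L_i)$ produces a proper birational $R$-model of $\mathbb{P}(L)$ with a morphism $g$ to $\prod_R\mathbb{P}(L_i)$; since this map is already a morphism on the generic fibre $\mathbb{P}(V)$, the image of $g$ is the closed, irreducible set whose generic point is that of $\mathbb{P}(V)$, namely $\mathcal{M}(\Gamma)$. Restricting to special fibres, the image of the strict transform of $\mathbb{P}(L)_k$ lies in $\mathcal{M}(\Gamma)_k$; but by the computational fact of the first paragraph that image is exactly $X_{\Gamma,[L]}$, so $X_{\Gamma,[L]}\subseteq\mathcal{M}(\Gamma)_k$. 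Letting $[L]$ range over $\mathrm{conv}(\Gamma)$ gives $\widetilde{\mathcal{M}}(\Gamma)\subseteq\mathcal{M}(\Gamma)_k$, and a fortiori $\widetilde{\mathcal{M}}^r(\Gamma)\subseteq\mathcal{M}(\Gamma)_k$.

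\textbf{The inclusion $\mathcal{M}(\Gamma)_k\subseteq\widetilde{\mathcal{M}}(\Gamma)$.} It suffices to produce, for each irreducible component $C$ of $\mathcal{M}(\Gamma)_k$ with generic point $\xi$, a lattice $[L_0]\in\mathrm{conv}(\Gamma)$ with $\xi\in X_{\Gamma,[L_0]}$: then $C=\overline{\{\xi\}}\subseteq X_{\Gamma,[L_0]}\subseteq\widetilde{\mathcal{M}}(\Gamma)$, and comparing dimensions one gets $C=X_{\Gamma,[L_0]}$, so en route every component is seen to be of this form. Put $R'=\mathcal{O}_{\mathcal{M}(\Gamma),\xi}$, a DVR with uniformiser $\pi$, fraction field $K'$ the function field of $\mathbb{P}^{d-1}_K$ and residue field $\kappa(\xi)$; the tautological point of $\mathcal{M}(\Gamma)$ over $\mathrm{Spec}\,R'$ is a line $\ell\subset V\otimes_K K'$ whose reductions in the $\mathbb{P}(L_i\otimes R')$ are the coordinates of $\xi$. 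For each $i$ let $m_i\in\mathbb{Z}$ be the unique integer with $\ell$ meeting $\pi^{m_i}(L_i\otimes R')$ primitively in the fixed rank-one lattice $\ell_0:=\ell\cap(L_1\otimes R')$, and set $L_0:=\bigcap_i\pi^{m_i}L_i$, a lattice over $R$ (intersection commutes with the flat extension $R\to R'$). Then $[L_0]\in\mathrm{conv}(\Gamma)$ because $\mathrm{conv}(\Gamma)$ is stable under intersections of homothety representatives (Definition \ref{def:conv}); and since each $\pi^{m_i}(L_i\otimes R')$ is by construction the minimal member of its class containing $L_0\otimes R'$, the reduction of $\ell$ in $\mathbb{P}(L_0\otimes\kappa(\xi))$ is primitive and avoids every $\mathbb{P}(W_i)$, so by the computational fact of the first paragraph $\xi\in X_{\Gamma,[L_0]}$. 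This proves part (1).

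\textbf{Part (2) and the main obstacle.} Once (1) is known, part (2) amounts to the combinatorial statement that, for $\Gamma$ in a single apartment, the lattices $[L_0]$ produced above — equivalently, the $[L]$ for which $X_{\Gamma,[L]}$ is a component of $\mathcal{M}(\Gamma)_k$ — are exactly the polyhedral vertices of $\mathrm{conv}(\Gamma)$, and $X_{\Gamma,[L]}\subseteq\widetilde{\mathcal{M}}^r(\Gamma)$ for the remaining $[L]\in\mathrm{conv}(\Gamma)$. In the apartment case all lattices are simultaneously diagonal, $\mathrm{conv}(\Gamma)$ is the tropical polytope spanned by the exponent vectors, and the subspaces $W_i=\ker(L\otimes k\to\widehat{L_i}\otimes k)$ are constant on the relative interiors of the cells of the associated mixed subdivision of a dilated simplex; matching this with the toric description of $\mathcal{M}(\Gamma)_k$ due to Cartwright, H\"abich, Sturmfels and Werner identifies the top-dimensional cells with the components and shows the lower cells are redundant. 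I expect the main obstacle to be the reverse inclusion in part (1) — precisely, showing that the lattice $L_0$ manufactured from the generic line $\ell$ really lies in $\mathrm{conv}(\Gamma)$ and that the reduction of $\ell$ lands in the open stratum of $X_{\Gamma,[L_0]}$ on which the defining map is an isomorphism; this is exactly where the interplay between the valuations of the coordinates of $\ell$ and the tropical-convexity combinatorics of $\mathrm{conv}(\Gamma)$ must be controlled, and where the tropical convex hull computations of the paper are indispensable. The bookkeeping in part (2) — translating between tropical polytopes, mixed subdivisions and the vanishing loci $W_i$ — is the secondary technical point.
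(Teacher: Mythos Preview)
Your argument for part~(1) is \emph{correct and genuinely different} from the paper's. The paper proceeds in two stages: first it proves the convex case by invoking Faltings' result that $\mathcal{M}(\Gamma)_k$ has exactly $|\Gamma|$ components together with Corollary~\ref{cor:bir} (each $[L]\in\Gamma$ contributes a primary component), then it deduces the general case by projecting from $\mathcal{M}(\mathrm{conv}(\Gamma))$ via Lemma~2.4 of \cite{cartwright2011mustafin}. Your route is more self-contained: from the DVR $R'=\mathcal{O}_{\mathcal{M}(\Gamma),\xi}$ you read off the lattice $L_0=\bigcap_i\pi^{m_i}L_i$ directly and land in $\mathrm{conv}(\Gamma)$ by iterating the closure property of Definition~\ref{def:conv}. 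This avoids the Faltings component count entirely. Two small corrections: the special fiber being reduced is \emph{not} by definition but is the content of Proposition~\ref{prop:must}(1), and your claim that $\bigcap_iW_i=\langle0\rangle$ for every $[L]\in\mathrm{conv}(\Gamma)$ is false in general (the paper notes this explicitly after Construction~\ref{con:var}); however it \emph{does} hold for the specific $L_0$ you build, since any $v\in L_0\setminus\pi L_0$ lying in every $\pi^{m_i+1}L_i$ would lie in $\pi L_0$. You also misidentify the obstacle: your verification that $[L_0]\in\mathrm{conv}(\Gamma)$ and that the reduced line avoids each $W_i$ is already complete --- no tropical convex hull computation is needed here.

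Your treatment of part~(2), by contrast, is only a sketch and does not reflect how the paper proceeds. The paper does \emph{not} match against the Cartwright--H\"abich--Sturmfels--Werner mixed-subdivision description; rather, it compares multidegrees. One knows $\mathrm{multDeg}(\mathcal{M}(\Gamma)_k)=\{(m_1,\dots,m_n):\sum m_i=d-1\}$ with all values $1$, and the substance is to show the same for $\widetilde{\mathcal{M}}^r(\Gamma)$. This is done by tropical intersection theory: for each such tuple, the stable intersection $H_1^{m_1}\cap_{st}\cdots\cap_{st}H_n^{m_n}$ of self-intersected tropical hyperplanes at the $v_i$ is a single point, which is a polyhedral vertex of $\mathrm{tconv}(\Gamma)$; Lemmas~\ref{lem:ker} and~\ref{lem:int} then compute the $d_I$ for that vertex and feed them into Theorem~\ref{thm:rat} to show its $X_{\Gamma,[L]}$ has multidegree exactly $\{(m_1,\dots,m_n)\}$. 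The non-generic case is handled by perturbation and limits of stable intersections. This bijection between polyhedral vertices and multidegrees is the technical core of part~(2), and your outline does not supply it.
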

In each case, it is easy to see that the right hand side is contained in the special fiber of the Mustafin variety. For the other direction, we have to identify those lattice points that actually contribute an irreducible component. This is done by means of tropical intersection theory and multidegrees. Using this description we get a complete classification of the irreducible components of special fibers of Mustafin varieties as each variety of the form $X(k^d,W_1,\dots,W_n)$ occurs as an irreducible component. Thus, we obtain the following application of Theorem \ref{thm:equ}:
\begin{thm}
\label{thm:class}
The varieties $X(k^d;W_1,\dots,W_d)$ classify all irreducible components of special fibers of Mustafin varieties, i.e.
\begin{enumerate}
\item Any irreducible component of the special fibers of a Mustafin varieties is a variety of the form $X(k^d;W_1,\dots,W_d)$, and
\item every variety $X(k^d;W_1,\dots,W_n)$ appears as an irreducible component of $\mathcal{M}(\Gamma)_k$ for some $\Gamma$.
\end{enumerate}
\end{thm}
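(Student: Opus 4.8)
The plan is to derive both statements from Theorem~\ref{thm:equ} together with the explicit description of the pieces $X_{\Gamma,[L]}$ supplied by Construction~\ref{con:var}. For part~(1), Theorem~\ref{thm:equ}(1) gives $\mathcal{M}(\Gamma)_k=\widetilde{\mathcal{M}}(\Gamma)=\bigcup_{[L]\in\mathrm{conv}(\Gamma)}X_{\Gamma,[L]}$, and by Construction~\ref{con:var} each $X_{\Gamma,[L]}$ is by definition a variety of the form $X(k^d;W_1,\dots,W_n)$. Any such variety is irreducible, being the closure of the image of the irreducible scheme $\mathbb{P}(W)$ (more precisely of the dense open locus on which the defining rational map is regular) under a morphism. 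Since $\mathcal{M}(\Gamma)_k$ is a closed subscheme of $(\mathbb{P}^{d-1}_k)^n$, it is Noetherian and has finitely many irreducible components $Z_1,\dots,Z_r$. If $Z_j$ is such a component, its generic point lies in some $X_{\Gamma,[L]}$, so $Z_j\subseteq X_{\Gamma,[L]}$; conversely $X_{\Gamma,[L]}$ is irreducible and contained in $\mathcal{M}(\Gamma)_k$, hence in some $Z_{j'}$, and the chain $Z_j\subseteq X_{\Gamma,[L]}\subseteq Z_{j'}$ forces $Z_j=X_{\Gamma,[L]}$. Thus every irreducible component of $\mathcal{M}(\Gamma)_k$ is one of the $X_{\Gamma,[L]}$, hence of the claimed form.

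For part~(2) I would realize a given tuple by reverse-engineering a configuration. Let $W=k^d$ and let $W_1,\dots,W_n\subsetneq W$ be subspaces with $\bigcap_iW_i=\langle 0\rangle$; discarding the indices with $W_i=\langle 0\rangle$ (which only contribute a trivial factor $\mathbb{P}(W)$) we may assume every $W_i$ is nonzero. Fix a lattice $\Lambda\subset V=K^d$ whose reduction $\overline{\Lambda}=\Lambda/\pi\Lambda$ is identified with $W$, and for each $i$ let $L_i\subset\Lambda$ be the preimage of $W_i$ under $\Lambda\twoheadrightarrow\overline{\Lambda}\cong W$. Then $\pi\Lambda\subsetneq L_i\subsetneq\Lambda$, so $L_i$ is again a rank-$d$ lattice, it is not contained in $\pi\Lambda$, and its image in $\overline{\Lambda}$ equals $W_i$ on the nose. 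Put $\Gamma:=\{\Lambda,L_1,\dots,L_n\}$, so that $[\Lambda]\in\Gamma\subset\mathrm{conv}(\Gamma)$. Running Construction~\ref{con:var} at the lattice $[\Lambda]$ — where the subspace attached to the pair $(\Lambda,L_i)$ is precisely the reduction $W_i$, and the pair $(\Lambda,\Lambda)$ contributes at most a trivial factor — yields an isomorphism $X_{\Gamma,[\Lambda]}\cong X(k^d;W_1,\dots,W_n)$.

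It then remains to see that $X_{\Gamma,[\Lambda]}$ is genuinely an irreducible component of $\mathcal{M}(\Gamma)_k$, not merely a lower-dimensional constituent. Here I would use that $\mathcal{M}(\Gamma)$ is irreducible of dimension $d$ — it is the closure of $\mathbb{P}(V)\cong\mathbb{P}^{d-1}_K$ — and flat over $R$, so that $\mathcal{M}(\Gamma)_k=V(\pi)$ is pure of codimension one in $\mathcal{M}(\Gamma)$, hence equidimensional of dimension $d-1$. Whenever $X(k^d;W_1,\dots,W_n)$ has dimension $d-1$ (equivalently, the defining rational map is generically finite — the relevant case, since a lower-dimensional $X$ cannot occur as a component of any special fiber), the irreducible closed subset $X_{\Gamma,[\Lambda]}\subseteq\mathcal{M}(\Gamma)_k$ attains the maximal possible dimension $d-1$ and is therefore automatically an irreducible component.

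The main obstacle is part~(2): one must carefully match the backward assignment $W_i\mapsto L_i$ with the forward recipe of Construction~\ref{con:var}, so that the subspaces read off at $[\Lambda]$ are exactly the prescribed $W_i$ (and the pair $(\Lambda,\Lambda)$ really does contribute only a harmless factor), and then argue — via the dimension count against the equidimensional special fiber — that $[\Lambda]$ contributes a top-dimensional component rather than being absorbed into another $X_{\Gamma,[L]}$. Part~(1), by contrast, is essentially formal once Theorem~\ref{thm:equ} and the irreducibility of the varieties $X(k^d;W_1,\dots,W_n)$ are in place.
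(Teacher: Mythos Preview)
Your argument for part~(1) is correct and essentially the paper's (which simply cites Theorem~\ref{thm:equ} and the factorisation in Equation~(\ref{equ:rat})); you just spell out the standard irreducibility/component bookkeeping.

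Part~(2), however, has a genuine gap. By taking $\Gamma=\{\Lambda,L_1,\dots,L_n\}$ you produce a configuration of size $n+1$, and the variety $X_{\Gamma,[\Lambda]}$ is the closure of the image of
\[
\mathbb{P}(k^d)\;\longrightarrow\;\mathbb{P}(k^d)\times\prod_{i=1}^n\mathbb{P}\!\left(k^d/W_i\right),
\]
the first factor being the identity. This is $X(k^d;0,W_1,\dots,W_n)$, i.e.\ the \emph{graph closure} of the rational map to $\prod_i\mathbb{P}(k^d/W_i)$, not its \emph{image closure} $X(k^d;W_1,\dots,W_n)$. The claim that the pair $(\Lambda,\Lambda)$ ``contributes only a harmless factor'' is exactly where this breaks: inserting an identity factor turns the image closure into the graph closure, and these differ in general. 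Concretely, for $d=3$, $W_1=\langle e_1\rangle$, $W_2=\langle e_2\rangle$ one has $X(k^3;W_1,W_2)=\mathbb{P}^1\times\mathbb{P}^1$, whereas $X(k^3;0,W_1,W_2)$ is the blow-up of $\mathbb{P}^2$ at two points; their Picard numbers are $2$ and $3$, so they are not isomorphic. (The same misconception underlies your ``discard the indices with $W_i=0$'' reduction.)

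The paper's fix is simply not to put the reference lattice into $\Gamma$: choose any $g_i\in\mathrm{End}(k^d)$ with $\ker g_i=W_i$, lift each $g_i$ to an invertible matrix $h_i$ over $K$ reducing to $g_i$, and set $\Gamma=\{h_1^{-1}L,\dots,h_n^{-1}L\}$. Then $\widetilde{f_{\Gamma,[L]}}$ has exactly the prescribed kernels $(W_1,\dots,W_n)$, so $X_{\Gamma,[L]}=X(k^d;W_1,\dots,W_n)$ on the nose; Lemma~\ref{lem:con} (whose proof does not actually use $[L]\in\mathrm{conv}(\Gamma)$) gives the containment in $\mathcal{M}(\Gamma)_k$, and your dimension argument then shows it is a component. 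Your construction of the $L_i$ as preimages of the $W_i$ is a perfectly good specific lift---you just must not add $\Lambda$ itself to the configuration.
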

As mentioned before, we show that Mustafin varieties are closely related to so-called (pre)linked Grassmannians. \textit{Linked Grassmannians} were introduced in \cite{osserman2006limit} in the context of limit linear series. Osserman introduced a new theory of limit linear series that in a certain sense compactifies the Eisenbud-Harris limit linear series theory. A generalisation of this notion was introduced in \cite{osserman2014limit}, as so-called prelinked Grassmannians. (Pre)linked Grassmannians are degenerations of Grassmannians induced by a graph with additional data at the vertices at edges. If this graph is just a path, we call these objects linked Grassmannians and they were proved to be flat with reduced fibers in \cite{helm2008flatness}. The focal objects in studying (pre)linked Grassmannians are so-called \textit{simple points}. For a convex point configuration $\Gamma$ we associate a linked Grassmannian as a scheme $LG(1,\Gamma)$ over $R$. In \cite{faltings2001toroidal}, Faltings introduced a moduli functor for Mustafin varieties. Using this result, we can interpret Mustafin varieties as the moduli space for the linked Grassmannian problem. Furthermore, this proves that the class of linked Grassmannians induced by the data $\Gamma$ is flat with reduced fibers. Moreover, using a connection between Mustafin varieties and the simple points of a linked Grassmannian, the following theorem is a direct application of Theorem \ref{thm:equ}.
\begin{thm}
\label{thm:link}
The locus of simple points of $\mathrm{LG}(1,\Gamma)$ is dense in every fiber over $R$.
\end{thm}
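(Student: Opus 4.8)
The plan is to deduce Theorem~\ref{thm:link} from Theorem~\ref{thm:equ} together with Faltings's moduli-theoretic description of Mustafin varieties. Since $\mathrm{Spec}(R)$ has only a generic and a closed point, it suffices to treat the generic fiber and the special fiber separately. Over the generic point $\pi$ is invertible, all linking maps become isomorphisms, and $\mathrm{LG}(1,\Gamma)_K$ is the Grassmannian $\mathrm{Gr}(1,V)=\mathbb{P}^{d-1}_K$; this fiber is irreducible and its (open) locus of simple points is nonempty, hence dense. Thus everything reduces to the special fiber.

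For the special fiber, the first step is to transport the statement to the language of images of rational maps. By Faltings's interpretation of $\mathcal{M}(\Gamma)$ as a moduli functor \cite{faltings2001toroidal}, the scheme $\mathrm{LG}(1,\Gamma)$ represents the linked Grassmannian functor attached to $\Gamma$ and is identified with $\mathcal{M}(\Gamma)$; in particular $\mathrm{LG}(1,\Gamma)_k\cong\mathcal{M}(\Gamma)_k$, and by Theorem~\ref{thm:equ}(1) the latter equals $\widetilde{\mathcal{M}}(\Gamma)=\bigcup_{[L]\in\mathrm{conv}(\Gamma)}X_{[L]}$. By Theorem~\ref{thm:class} each irreducible component of this scheme is a variety $X(k^d;W_1,\dots,W_n)$, and under the identification above the tautological linked data on a component $X_{[L]}$ is read off from the coordinate quotients $\mathbb{P}(k^d)\dashrightarrow\mathbb{P}(k^d/W_i)$ determined by $[L]$ and $\Gamma$ in Construction~\ref{con:var}.

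The key step is to show that a general point of each irreducible component $X_{[L]}$ of the special fiber is a simple point of $\mathrm{LG}(1,\Gamma)$. Let $U_{[L]}\subseteq X_{[L]}$ be the dense open subset of points $x$ that lie on no other component and near which the rational parametrisation $f\colon\mathbb{P}(k^d)\dashrightarrow\prod_i\mathbb{P}(k^d/W_i)$ is a local isomorphism, so that $x=f(\ell)$ for a unique line $\ell\subseteq k^d$. This presentation of $x$ as the image of a single line forces the linking maps between consecutive spaces $W_i\to W_{i+1}$ to attain maximal rank on the tautological line at $x$, which by the rank (equivalently, kernel-dimension) characterisation of simple points in \cite{osserman2006limit,helm2008flatness} is exactly the condition that $x$ be simple. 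As $[L]$ ranges over $\mathrm{conv}(\Gamma)$, the open sets $U_{[L]}$ meet every irreducible component of $\widetilde{\mathcal{M}}(\Gamma)$ in a dense open subset; hence the open simple locus is dense in the special fiber, and together with the generic-fiber case this gives density in every fiber over $R$.

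The main obstacle is the key step: reconciling the intrinsic, moduli-theoretic notion of a simple point with the explicit geometry of $X(k^d;W_1,\dots,W_n)$. This requires choosing a basis of $k^d$ adapted to the lattice configuration so that each linking map $W_i\to W_{i+1}$ becomes an explicit monomial map, and then checking that the rank conditions cutting out the simple locus are satisfied at the generic point of each $X_{[L]}$ — i.e. that the non-simple locus is swept out by the proper closed subsets where $f$ fails to be an immersion and where distinct components of $\widetilde{\mathcal{M}}(\Gamma)$ intersect. Once this local analysis is in place, the density statement follows formally.
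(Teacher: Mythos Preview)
Your overall architecture agrees with the paper's: reduce to the special fiber, identify $\mathrm{LG}(1,\Gamma)_k$ with $\mathcal{M}(\Gamma)_k$ via Faltings, and invoke Theorem~\ref{thm:equ}(1). The gap is in the key step, and it is not just an unfinished calculation but a misapplied criterion. You appeal to ``the rank (equivalently, kernel-dimension) characterisation of simple points in \cite{osserman2006limit,helm2008flatness}''. Those references supply a rank criterion for \emph{exact} points of a \emph{linked} Grassmannian, i.e.\ when the underlying graph is a path (Proposition~\ref{prop:facts} here); simple and exact points are only known to coincide in the $s$-linked chain case. For a general convex $\Gamma$ the graph $G(\Gamma)$ is not a path, and no such rank description of the simple locus is available to invoke. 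Your phrase ``linking maps between consecutive spaces $W_i\to W_{i+1}$'' reflects the same confusion: the $W_i$ are the kernels $\ker(\widetilde{g_i^{-1}})\subset k^d$, not the ambient bundles, and the linking maps run along the edges of $G(\Gamma)$, which carries no canonical linear ordering $i\to i+1$.

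The paper bypasses this entirely by returning to the \emph{definition} of a simple point. In rank~$1$, a $k$-point $(F_v)_{v\in V(G)}$ is simple exactly when there is a vertex $v'$ and a nonzero $s\in F_{v'}$ such that for every $v$ some path from $v'$ to $v$ carries $s$ to a generator of $F_v$. But the map $\widetilde{f_{\Gamma,[L']}}$ of Construction~\ref{con:var} is precisely ``push a line at $[L']$ along minimal paths to each $[L_i]\in\Gamma$'', so every point of $\mathrm{Im}(\widetilde{f_{\Gamma,[L']}})$ is tautologically simple, rooted at $[L']$, and conversely any simple point rooted at $[L']$ lies in this image. Hence $\mathrm{Simp}(\Gamma)=\bigcup_{[L]\in\Gamma}\mathrm{Im}(\widetilde{f_{\Gamma,[L]}})$, and taking closures gives $\overline{\mathrm{Simp}(\Gamma)}=\widetilde{\mathcal{M}}(\Gamma)=\mathcal{M}(\Gamma)_k=\mathrm{LG}(1,\Gamma)_k$ by Theorem~\ref{thm:equ}. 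The ``main obstacle'' you identify dissolves once one uses the definition directly rather than a rank test that does not exist in this generality.
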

Finally, we use our moduli space interpretation of Mustafin varieties to give a connection between Mustafin varieties and so-called \textit{local models of Shimura varieties associated to an (EL)-datum} as studied in \cite{gortz2001flatness}. Shimura varieties can be thought of as modular curves in higher dimension and are of importance in Langlands program.\\
This paper is structured as follows: In section \ref{sec:pre} we give a quick review of the tools needed to prove our theorems. In particular, we will give a summary for the notion of tropical convexity and the relation between Bruhat-Tits Buildings and tropical convexity. We state that for a fixed point configuration $\Gamma$ in one apartment, there is a bijection between the vertices in the convex hull of $\Gamma$ and the lattice points in the tropical hull of certain points in the tropical torus associated to $\Gamma$. We summarize some of the structural results for Mustafin varieties and introduce the necessary basis for (pre)linked Grassmannians in \ref{sec:link}. We finish the Preliminaries with a brief introduction to local models of Shimura varieties. Section \ref{sec:const} consists of the proof of Theorem \ref{thm:equ} and Theorem \ref{thm:class}. In subsection \ref{subsec:const}, we construct the varieties $\widetilde{\mathcal{M}}(\Gamma)$ and $\widetilde{\mathcal{M}}^r(\Gamma)$. We prove Theorem \ref{thm:equ} for the special case of $|\Gamma|=2$ in subsection \ref{sub:twola}. Moreover, we prove Theorem \ref{thm:equ} (1) in subsection \ref{sub:proof2}, Theorem \ref{thm:equ} (2) in subsection \ref{sec:proof1} and Theorem \ref{thm:class} in subsection \ref{sub:class}. In section \ref{sec:mulink}, we connect Mustafin varieties to prelinked Grassmannians by means of a moduli functor introduced by Faltings and prove Theorem \ref{thm:falt}. Moreover, we give an explicit description of the locus of simple points of the linked Grassmannians in terms of the special fibers of Mustafin varieties and prove Theorem \ref{thm:link}, Finally in subsection \ref{sec:shi} we interpret Mustafin varieties as a global realisation of the standard local model for Shimura varieties associtaed to an (EL)-datum.

\textbf{Acknowledgements.} We thank Hannah Markwig for her guidance and proof-reading throughout the preparation of this paper. Furthermore, we would like to thank Sarah Brodsky, Michael Joswig, Brian Osserman, Johannes Rau, Frank-Olaf Schreyer, Kristin Shaw and Bernd Sturmfels for many interesting discussions, comments and suggestions concerning this topic. Finally, we thank Annette Werner for pointing out a gap in Theorem \ref{thm:equ} (2) in an earlier version of this paper. The first author acknowledges partial support by the DFG collaborative research center TRR 195, project A11 (INST 248/235-1).
\section{Preliminaries}
\label{sec:pre}
\subsection{Tropical Geometry}
In this subsection, we recall some basics of tropical geometry required for this paper. Our main combinatorial tool in this paper is the notion of tropical convexity. We restrict ourselves to basic notions and results and refer to \cite{MR3287221} Chapter 5.2 for a more detailed introduction. Our proof of Theorem \ref{thm:equ} involves the identification of certain lattice points in so-called \textit{tropical convex hulls}. We achieve this by means of tropical intersection theory of tropical linear spaces (i.e. tropical varieties of degree $1$). Tropical intersection theory is a well-developed theory, for more details see e.g. \cite{allermann2010first} or \cite{MR3287221}.
\subsubsection{Tropical Convexity}
\label{sec:trop}
In a sense tropical convexity is the notion of convexity over the tropical semiring $(\overline{\mathbb{R}},\oplus,\odot)$, where $\overline{\mathbb{R}}=\mathbb{R}\cup\{\infty\}$, $a\oplus b=\mathrm{min}(a,b)$ and $a\odot b=a+b$. We make this more precise in the following definition:
\begin{Definition}
Let $S$ be a subset of $\mathbb{R}^n$. We call $S$ \textit{tropically convex}, if for any choice $x,y\in S$ and $a,b\in\mathbb{R}$ we get $a\odot x\oplus b\odot y\in S$.\\
The tropical convex hull of a given subset $V$ of $\mathbb{R}^n$ is given as the intersection of all tropically convex sets in $\mathbb{R}^n$ containing $S$. We denote the tropical convex hull of $V$ by $\mathrm{tconv}(V)$.
\end{Definition}

This definition implies that every tropical convex set $S$ is closed under tropical scalar multiplication. Thus, if $x\in S$ then so is $x+\lambda\textbf{1}$, where $\lambda\in\mathbb{R}$ and $\textbf{1}=(1,\dots,1)$. Therefore, we will usually identify $S$ with its image in $(n-1)$-dimension tropical torus $\faktor{\mathbb{R}^n}{\mathbb{R}\textbf{1}}$.

We are interested in tropical convex hulls of a finite number of points. We begin by treating the case of two points.

\begin{Proposition}[\cite{develin2004tropical}]
\label{prop:tropline}
 The tropical convex hull of two points $x,y\in\faktor{\mathbb{R}^n}{\mathbb{R}\textbf{1}}$ is a concatenation of at most $n-1$ ordinary line-segments. The direction of each line segment is a zero-one-vector.
\end{Proposition}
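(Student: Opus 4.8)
The plan is to produce the segment-by-segment decomposition explicitly and then verify that the resulting concatenation is exactly $\mathrm{tconv}(\{x,y\})$. First I would normalize: since we work in $\faktor{\mathbb{R}^n}{\mathbb{R}\mathbf{1}}$, choose representatives $x,y\in\mathbb{R}^n$ and consider the coordinate differences $x_i - y_i$. After reindexing we may assume $x_1 - y_1 \le x_2 - y_2 \le \dots \le x_n - y_n$, and after a further tropical scalar shift (adding a multiple of $\mathbf{1}$) we may assume $x_i - y_i \ge 0$ for all $i$ with the smallest equal to $0$. Let $\lambda_1 \le \lambda_2 \le \dots \le \lambda_m$ be the distinct values among the $x_i - y_i$ (so $m \le n$, and $\lambda_1 = 0$). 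I claim the relevant points are
\[
p^{(j)} = \min(x,\ \lambda_j \odot y) = \big(\min(x_i, \lambda_j + y_i)\big)_{i\in[n]}, \qquad j = 1,\dots,m,
\]
where $\min$ is taken coordinatewise; note $p^{(1)} = y$ (up to the normalization) and $p^{(m)} = x$.

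The key steps are then: (i) Show each $p^{(j)} = x \oplus (\lambda_j \odot y) \in \mathrm{tconv}(\{x,y\})$ by definition of tropical convexity, and more generally that $t \odot x \oplus s \odot y$, as $s,t$ range over $\mathbb{R}$, traces out precisely the union $\bigcup_{j=1}^{m-1}[p^{(j)}, p^{(j+1)}]$ in the tropical torus — because scaling $x$ and $y$ only depends on the difference of the scalars, so the one-parameter family $x \oplus (\mu \odot y)$ for $\mu \in \mathbb{R}$ captures everything, and as $\mu$ crosses each threshold $\lambda_j$ one more coordinate switches from being governed by $x$ to being governed by $y$. (ii) Identify the segment $[p^{(j)}, p^{(j+1)}]$: for $\mu \in [\lambda_j, \lambda_{j+1}]$, the coordinates $i$ with $x_i - y_i \le \lambda_j$ contribute $x_i$ (constant) and those with $x_i - y_i \ge \lambda_{j+1}$ contribute $\mu + y_i$, which moves with slope $1$. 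Hence $p^{(j+1)} - p^{(j)}$ is $(\lambda_{j+1} - \lambda_j)$ times a $0/1$ vector (ones exactly in the coordinates not yet "switched"), giving an ordinary line segment in the stated direction. (iii) Count: there are $m - 1 \le n - 1$ such segments, and they concatenate because $p^{(j)}$ is the common endpoint of consecutive ones; this is the "at most $n-1$" bound. (iv) Finally, check this concatenation is all of $\mathrm{tconv}(\{x,y\})$: it contains $x$ and $y$, and one shows it is tropically convex — given two points on the path, their tropical combination is again of the form $x \oplus (\mu \odot y)$ up to tropical scaling (using that each point on the path already is such a combination), hence lies on the path; by minimality of the tropical convex hull we get equality.

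The main obstacle is step (iv), proving the explicit polygonal path is actually tropically convex (not merely contained in the hull): one must check that $a \odot u \oplus b \odot w$ lands back on the path for arbitrary $u, w$ on it, which requires the observation that the path is totally ordered in a suitable sense — concretely, that $u = x \oplus (\mu_u \odot y)$ and $w = x \oplus (\mu_w \odot y)$ implies $a\odot u \oplus b\odot w = x \oplus (\mu \odot y)$ for $\mu = \min(a + \mu_u,\ b + \mu_w)$ after absorbing the constants, using idempotency and the absorptive identity $x \oplus (c \odot x) = x$ for $c \ge 0$. The bookkeeping with the two tropical scalars and the normalization is the only place requiring care; everything else is a direct unwinding of the coordinatewise $\min$. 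I would also remark that genericity of $x - y$ (all differences distinct) forces exactly $n - 1$ segments, explaining the "at most."
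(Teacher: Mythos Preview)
Your proposal is correct and follows exactly the constructive approach the paper outlines (and attributes to \cite{develin2004tropical}): the paper does not prove the proposition itself but records the explicit breakpoints $(y_j-x_j)\odot x\oplus y$ after the same normalization you use, and your steps (i)--(iv) fill in precisely the details of why that list of points gives the full tropical segment. The only cosmetic difference is that you parametrize via $x\oplus(\mu\odot y)$ whereas the paper writes $(\mu\odot x)\oplus y$, which amounts to swapping the roles of $x$ and $y$.
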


The proof of this proposition is constructive and describes the points in the tropical convex hull explicitly. We will use this fact in section \ref{sub:twola} to prove our statements in the case of a $2-$point configuration. To give this explicit description for $x=(x_1,\dots,x_n)$ and $y=(y_1,\dots,y_n)$, we note that after relabelling and adding multiples of $\textbf{1}$, we may assume $0=y_1-x_1\le y_2-x_2\le\dots\le y_n-x_n$. Then the tropical convex hull consists of the concatenation of the lines connecting the following points:
\begin{align*}
 x=&(y_1-x_1)\odot x\oplus y=(y_1,y_1-x_1+x_2,y_1-x_1+x_3,\dots,y_1-x_1+x_n)\\
 &(y_2-x_2)\odot x\oplus y=(y_1,y_2,y_2-x_2+x_3,\dots,y_2-x_2+x_n)\\
 &\cdots\cdots\\
 &(y_{n-1}-x_{n-1})\odot x\oplus y=(y_1,y_2,\dots,y_{n-1},y_{n-1}-x_{n-1}+x_n)\\
 &(y_n-x_n)\odot x\oplus y=(y_1,\dots,y_n)
\end{align*}
Some of these points might coincide, however they are always consecutive points on the line segment. Next, we introduce a useful description of tropical convex hulls in terms of bounded cells of a tropical hyperplane arrangement: Fix a set $\Gamma=\{v_1,\dots,v_n\}$ of $\faktor{\mathbb{R}^n}{\mathbb{R}\textbf{1}}$, with $v_i=(v_{i1},\dots,v_{in})$. Consider the standard tropical hyperplane at $v_i$ in the max-plus algebra:
\begin{align*}
H_{v_i}=\{w\in\faktor{\mathbb{R}^n}{\mathbb{R}\textbf{1}}:\ &\textrm{the \textit{maximum} of }w_1-v_{i1},\dots,w_n-v_{in}\\
&\textrm{ is attained at least twice}\}.
\end{align*}
Taking the common refinement, we obtain a polyhedral complex structure to $\faktor{\mathbb{R}^n}{\mathbb{R}\textbf{1}}$, i.e. a subdivision into convex polyhedra. The union of the bounded cells of this complex coincides with the tropical convex hull $\mathrm{tconv}(\Gamma)$ (see e.g. chapter 5.2 in \cite{MR3287221}).

\begin{figure}
\begin{center}
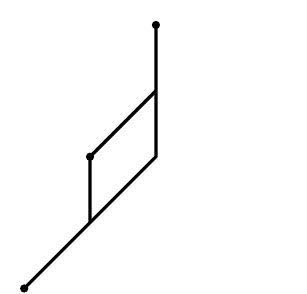
\caption{The tropical convex hull of $v_1=(0,-1,-2)$, $v_2=(0,-2,-4)$ and $v_3=(0,-3,-6)$ in the tropical torus.}
\label{fig:tconv}
\end{center}
\end{figure}

In the following example we compute the tropical convex hull of three points in the tropical torus.
\begin{Example}
\label{ex:tc}
 We pick 3 points 
\begin{align*}
 &v_1=(0,-1,-2),\\
 &v_2=(0,-2,-4),\\
 &v_3=(0,-3,-6).
 \end{align*}
 Viewed as points in the tropical torus, we can identify these points with $\widetilde{v}_1=(-1,-2)$, $\widetilde{v}_1=(-2,-4)$ and $\widetilde{v}_3=(-3,-6)$. The tropical convex hull is illustrated in Figure \ref{fig:tconv}.
\end{Example}

\begin{Remark}
The tropical convex hull of finitely many points is also called a \textit{tropical polytope}. Tropical polytopes can be thought of as tropicalisations of polytopes over the field of real Puiseux series $\mathbb{R}\{\{t\}\}$  (see Proposition 2.1 in \cite{zbMATH05246540}). One can generalise this notion to arbitrary tropical polyhedra and polyhedra over $\mathbb{R}\{\{t\}\}$, which in turn has applications in linear programming and complexity theory (see e.g. \cite{zbMATH06514534}).
\end{Remark}

\subsubsection{Stable Intersection}

Tropical intersection theory is a well-developed field. In this paper, we will only intersect standard tropical hyperplanes, which is why we restrict ourselves to this case. A more general discussion can be found in \cite{}.\par 

\begin{Definition}
We fix $n$ points $v_1,\dots,v_n\in\faktor{\mathbb{Z}^d}{\mathbb{Z}\textbf{1}}$ $H_i$ be the standard tropical hyperplane at $v_i$. We denote the \textit{set-theoretic intersection of $H_1,\dots,H_n$} by
\begin{equation*}
H_1\cap\dots\cap H_n.
\end{equation*}
Moreover, we define the \textit{stable intersection of $H_1,\dots,H_n$} by
\begin{align*}
H_1\cap_{st}\dots\cap_{st}H_n=\lim_{\epsilon_1,\dots,\epsilon_n\to 0}(H_1+\epsilon_1\cdot w_1)\cap\dots\cap(H_n+\epsilon_n\cdot w_n),
\end{align*}
where $+$ denotes the Minkowski sum and $w_i\in\faktor{\mathbb{Z}^d}{\mathbb{Z}\textbf{1}}$ are generic vertices.
\end{Definition}

\begin{Remark}
We note, that there are several definitions of stable intersection in tropical geometry, which all turn out to be equivalent. For various viewpoints, we refer to \cite{allermann2010first,rau2008intersections,katz2012tropical,jensen2016stable, shaw2013tropical}.
\end{Remark}

\begin{Example}
We illustrate the difference between set-theoretic intersection and stable intersection in the example of two lines not in tropical general position. The two lines in Figure \ref{fig:intersect} intersect set-theoretically in the half-bounded line segment as illustrated in the upper right. However, the stable intersection only yields a single point as illustrated in the lower right.\end{Example}

\begin{figure}
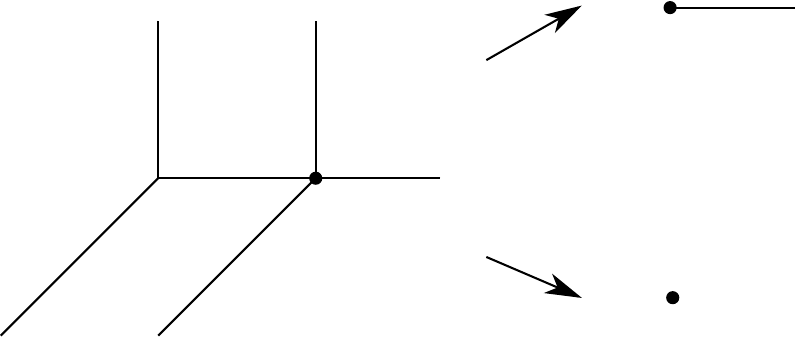
\caption{The difference between set-theoretic and stable intersection.}
\label{fig:intersect}
\end{figure}

In algebraic geometry, two general linear spaces of respective codimension $m_1$ and $m_2$ intersect in codimension $m_1+m_2$. A similar fact holds for tropical linear spaces. We first introduce a notion of points in \textit{tropical general position}.

\begin{Definition}
 A square matrix $M\in \mathbb{R}^{r\times r}$ is \textit{tropically singular}, if the minimum in \[\mathrm{det}(M)=\bigoplus_{\sigma\in\mathcal{S}_r}m_{1\sigma(1)}\odot\cdots\odot m_{r\sigma(r)}\] is attained at least twice. A point configuration $m_1,\dots,m_n$ in $\mathbb{R}^{d}$ is in \textit{tropical general position}, if every maximal minor of the matrix $((m_{ij})_{ij})$ is non-singular.
\end{Definition}


Let $H$ be the standard tropical hyperplane at a point $v$ in $\faktor{\mathbb{R}^n}{\mathbb{R}\textbf{1}}$, then we denote the $k-$fold stable self-intersection (i.e. $\underbrace{H\cap_{st}\dots\cap_{st}H}_\text{\textit{k}\;times}$) by $H^k$. Moreover, for a polyhedral complex $P$ of dimension $d$, we call its subcomplex $P'$ consisting of all polyhedra in $P$ of dimension smaller or equal to $k$, where $k<d$, \textit{the $k-$skeleton of $P$}. The following fact was proved in \cite{allermann2010first}.

\begin{Lemma}
Let $H$ be the standard tropical hyperplane in $\faktor{\mathbb{R}^n}{\mathbb{R}\textbf{1}}$ at $v$. Its $k-$fold stable self-intersection $H^k$ is given by its $(d-k)$-skeleton.
\end{Lemma}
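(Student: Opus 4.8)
The plan is to show that $H^k$ coincides, as a weighted polyhedral complex, with the union of the closed cells of $H$ of codimension $\ge k$, and then to recognise this union as the asserted skeleton. Translating, we may take $v=0$; in the max-plus convention of the statement (or up to the sign change $w\mapsto -w$) we then have $H=\bigcup_{|S|\ge 2}\overline{C}_S$, where $\overline{C}_S=\{w\in\mathbb{R}^n/\mathbb{R}\mathbf{1}:w_i=w_j\le w_\ell\text{ for }i,j\in S,\ \ell\notin S\}$ is a cone of dimension $n-|S|$ whose linear span is $\{w:w\text{ constant on }S\}$. The target statement is then $H^k=\bigcup_{|S|\ge k+1}\overline{C}_S$, with all weights $1$; this is exactly the $(d-k)$-skeleton of $H$ once $d$ is read as the dimension $n-1$ of the ambient torus $\mathbb{R}^n/\mathbb{R}\mathbf{1}$, so that in particular $H^1=H$.

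The cleanest route is algebraic. First realise $H=\mathrm{trop}(\mathcal{H})$ for any hyperplane $\mathcal{H}=\{a_1x_1+\dots+a_nx_n=0\}$ with $\mathrm{val}(a_i)=0$: by the fundamental theorem of tropical geometry a point lies in $\mathrm{trop}(\mathcal{H})$ iff the minimum of its coordinates is attained at least twice, which is the defining condition of $H$. Since the stable intersection of tropicalised linear spaces is again a tropicalised linear space of the expected dimension (Theorem \ref{thm:stable}), iterating $k-1$ times gives $H^k=\mathrm{trop}(L)$ for a linear subspace $L\subseteq K^n$ of dimension $n-k$. Now $H$, hence $H^k$, is invariant under the coordinate-permuting $S_n$-action on $\mathbb{R}^n/\mathbb{R}\mathbf{1}$, which acts by lattice automorphisms; therefore the valuated matroid of $L$ is an $S_n$-symmetric valuated matroid of rank $n-k$, forcing it to be the uniform matroid $U_{n-k,n}$ with trivial valuation. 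Hence $H^k$ is its Bergman fan, $\bigl\{w:\text{for every }B\in\binom{[n]}{n-k+1},\ \min_{i\in B}w_i\text{ is attained at least twice}\bigr\}$, carrying only weight-$1$ cells. Finally this set equals $\bigcup_{|S|\ge k+1}\overline{C}_S$: if $w\in\overline{C}_S$ with $|S|\ge k+1$ then any $B$ of size $n-k+1$ meets $S$ in at least two indices, on which the global (hence the $B$-) minimum of $w$ is attained; conversely, if $w$ satisfies the Bergman condition and $S$ denotes the set on which $w$ attains its global minimum, then $|S|\ge k+1$, since otherwise one could enlarge a single index of $S$ by $n-k$ indices outside $S$ to a set $B$ of size $n-k+1$ on which $\min_{i\in B}w_i$ is attained only once.

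Alternatively one can argue by induction on $k$ directly from the stable-intersection formula, writing $H^{k+1}=H^k\cap_{st}H$: the linear spans of cells $\overline{C}_S$ (with $|S|\ge k+1$, from $H^k$) and $\overline{C}_{S'}$ (with $|S'|\ge 2$, from $H$) sum to all of $\mathbb{R}^n/\mathbb{R}\mathbf{1}$ exactly when $|S\cap S'|\le 1$ (a short computation with $\dim(A+B)=\dim A+\dim B-\dim(A\cap B)$), in which case the displaced intersection of the two cells is $(n-|S|-|S'|+1)$-dimensional and the closures reassemble, as $(S,S')$ ranges over such pairs, into the cones $\overline{C}_{S\cup S'}$ with $|S\cup S'|\ge k+2$; conversely each $\overline{C}_T$ with $|T|=k+2$ arises from $S=T\setminus\{t_0\}$ and $S'=\{t_0,t_1\}$ with $t_1\in S$, so that $|S\cap S'|=1$ and $S\cup S'=T$.

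I expect the main obstacle to be the bookkeeping around weights and displacements. In the inductive route one must verify that passing to a generic displacement and then to the limit really yields $\bigcup_{|T|\ge k+2}\overline{C}_T$ with every weight equal to $1$, rather than with excess multiplicity concentrated on lower-dimensional cells; in the algebraic route the delicate point is that the iterated stable intersections genuinely drop dimension by one at each step, so that $\dim L=n-k$, which is where one invokes the fact that stable intersections have the expected dimension — after which the $S_n$-symmetry does the rest. The remaining ingredients, namely the dimension counts and the identification of the Bergman fan of $U_{n-k,n}$ with a skeleton of $H$, are routine.
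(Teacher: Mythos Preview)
The paper does not supply a proof of this lemma: the sentence immediately preceding it reads ``The following fact was proved in \cite{allermann2010first}'', and the text moves on. So there is no argument in the paper to compare against, only a citation.

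Your algebraic route is correct and self-contained. Realising $H=\mathrm{trop}(\mathcal{H})$ for a classical hyperplane and iterating Theorem~\ref{thm:stable} gives $H^k=\mathrm{trop}(L)$ for a linear subspace $L\subset K^n$ of dimension $n-k$; since stable intersection is natural under the lattice automorphisms induced by $S_n$ and $H$ itself is $S_n$-invariant, so is $H^k$. Because a tropical linear space determines its valuated matroid, that matroid is then $S_n$-invariant --- hence the underlying matroid is $U_{n-k,n}$ (as $S_n$ acts transitively on $\binom{[n]}{n-k}$) and the Pl\"ucker valuation is constant, i.e.\ trivial. Your identification of the resulting Bergman fan with $\bigcup_{|S|\ge k+1}\overline{C}_S$ via the circuit description is clean, and the weight-$1$ assertion follows at once from the fact, recorded in the paper's preliminaries, that tropicalised linear spaces carry unit weights. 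Your inductive route through the displacement definition of $\cap_{st}$ is also valid in outline and is closer in spirit to how \cite{allermann2010first} actually argues, but, as you correctly anticipate, the multiplicity bookkeeping under generic translation is where all the work lies; the symmetry argument sidesteps this entirely and is the more efficient of the two. Your reading of $d$ as the ambient dimension $n-1$ is the one that makes the statement consistent with $H^1=H$.
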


\begin{Remark}
Let $v_1,\dots,v_n\in\faktor{\mathbb{Z}^d}{\mathbb{Z}\textbf{1}}$, let $m_1,\dots,m_n\in\mathbb{Z}_{\ge0}$ and let $H_i$ be the standard tropical hyperplane at $v_i$. Then the stable intersection coincides with set-theoretic intersection in the sense that \[H_1^{m_1}\cap_{st}\dots\cap_{st} H_n^{m_n}=H_1^{m_1}\cap\dots\cap H_n^{m_n}.\]
\end{Remark}


We end this section, with the following proposition.

\begin{Proposition}
\label{prop:tropint}
Let $v_1,\dots,v_n\in\faktor{\mathbb{Z}^d}{\mathbb{Z}\textbf{1}}$ and let $m_1,\dots,m_n\in\mathbb{Z}_{\ge0}$, such that $\sum_{i=1}^nm_i=d-1$. Further, let $H_i$ be standard tropical hyperplane at $v_i$, then \[H_1^{m_1}\cap_{st}\dots\cap_{st} H_n^{m_n}=\{pt\}\] is exactly one point with multiplicity one. If $v_1,\dots,v_n$ are in tropical general position, the set-theoric intersection \[H_1^{m_1}\cap\dots\cap H_n^{m_n}=\{pt\}\] coincides with the intersection product.
\end{Proposition}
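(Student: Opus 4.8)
The plan is to deduce this directly from the material developed above, the essential observation being that each stable self-intersection $H_i^{m_i}$ is itself a tropicalised linear space, so that Theorem~\ref{thm:stable} applies to all of the factors simultaneously. First I would note that the standard tropical hyperplane $H_i$ rooted at $v_i$ is $\mathrm{trop}(\mathcal{H}_i)$ for a hyperplane $\mathcal{H}_i\subset\mathbb{P}(V)\cong\mathbb{P}^{d-1}_K$ whose defining coefficients have valuations prescribed by $v_i$. By the preceding Lemma, $H_i^{m_i}$ is a pure subcomplex of $H_i$ of codimension $m_i$ in $\faktor{\mathbb{R}^d}{\mathbb{R}\mathbf{1}}$, with all weights equal to $1$; iterating Theorem~\ref{thm:stable} against generic torus-translates of $\mathcal{H}_i$ of valuation $\mathbf{0}$ identifies this subcomplex with $\mathrm{trop}(\mathcal{L}_i)$, where $\mathcal{L}_i=\mathcal{H}_i\cap\mathbf{t}_{i,2}\mathcal{H}_i\cap\dots\cap\mathbf{t}_{i,m_i}\mathcal{H}_i\subset\mathbb{P}^{d-1}_K$ is a linear subspace of codimension $m_i$. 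Hence each $H_i^{m_i}$ is a tropicalised linear space of codimension $m_i$, and in particular of degree $1$.

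Next I would apply Theorem~\ref{thm:stable} a further $n-1$ times to write
\[H_1^{m_1}\cap_{st}\dots\cap_{st}H_n^{m_n}=\mathrm{trop}\bigl(\mathcal{L}_1\cap\mathbf{t}_2\mathcal{L}_2\cap\dots\cap\mathbf{t}_n\mathcal{L}_n\bigr)\]
for suitably generic $\mathbf{t}_\ell$ with $\mathrm{val}(\mathbf{t}_\ell)=\mathbf{0}$. Since $\sum_{i=1}^n m_i=d-1=\dim\mathbb{P}^{d-1}_K$, such a generic intersection of linear subspaces of complementary total codimension consists of a single point, occurring with multiplicity one and lying in the open torus (ordinary linear algebra over the infinite field $K$, i.e.\ Bézout); its tropicalisation is exactly one point with multiplicity $1$, which gives the first assertion. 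Equivalently, and more in keeping with the preceding discussion: the stable intersection has dimension $(d-1)-\sum_i m_i=0$ and total weight $\prod_i\deg(H_i^{m_i})=1$ by multiplicativity of degree under stable intersection, and since tropicalised linear spaces are effective this forces a single point of weight $1$.

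For the second assertion, assume $v_1,\dots,v_n$ are in tropical general position, i.e.\ every maximal minor of the matrix $(v_{ij})$ is tropically non-singular. This is precisely the condition ensuring that the cells of the $H_i$, and hence those of the skeleta $H_i^{m_i}$, meet transversally and in the expected codimension at every point of their set-theoretic intersection; thus $H_1^{m_1}\cap\dots\cap H_n^{m_n}$ is already $0$-dimensional and coincides cell by cell with the stable intersection $H_1^{m_1}\cap_{st}\dots\cap_{st}H_n^{m_n}$ (compare the Remark preceding the Proposition). By the first part it is therefore the single point $\{pt\}$ of multiplicity $1$, and it agrees with the intersection product, which by \cite{rau2008intersections,katz2012tropical} is exactly the stable intersection.

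I expect the one delicate point to be the reduction in the first paragraph: matching the combinatorially defined skeleton $H_i^{m_i}$ with $\mathrm{trop}(\mathcal{L}_i)$ for a genuine linear subspace, so that Theorem~\ref{thm:stable} is applicable, and tracking that all weights remain equal to $1$ throughout --- the latter because tropicalisations of linear spaces carry weight $1$, and a transverse stable intersection of weight-$1$ polyhedra again has weight $1$. Everything else is dimension-and-degree bookkeeping with results already in hand.
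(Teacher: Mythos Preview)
Your proposal is correct and follows precisely the route the paper indicates: the paper does not spell out a proof but simply states that the Proposition is ``a consequence of the previous discussion,'' namely Theorem~\ref{thm:stable}, the Lemma on the $k$-fold self-intersection being the $(d-k)$-skeleton, and the Remark on stable versus set-theoretic intersection. Your write-up is a faithful and more detailed unpacking of exactly that argument, including the identification of $H_i^{m_i}$ with the tropicalisation of a codimension-$m_i$ linear space and the degree/dimension count; nothing essential differs from the paper's (implicit) proof.
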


\begin{proof}
This follows immediately from theorem 3.5 in \cite{jensen2016stable}, since $H_i^{m_i}$ is the tropicalisation of a linear space of co-dimension $m_i$.
\end{proof}

\subsection{Bruhat-Tits Buildings and Tropical Convexity}
\label{sec:bruh}
In this section, we recall some of the relations between Bruhat-Tits buildings and tropical convexity. For a short summary of Bruhat-Tits buildings, we refer to Section 2 of \cite{cartwright2011mustafin}, for more details on the relation between buildings and tropical convexity see e.g. \cite{zbMATH05900629}. We denote the Bruhat-Tits building associated to $\mathrm{PGL}(V)$ by $\mathfrak{B}_d$ and by $\mathfrak{B}_d^0$ the set of lattice classes in $\mathfrak{B}_d$. We call two lattice classes $[L],[M]$ in $\mathfrak{B}_d^0$ adjacent if there exist representative $L'\in[L]$ and $M'\in[M]$, such that $\pi M'\subset L'\subset M'$.\\
We pick a basis $e_1,\dots,e_d$ of $V$. The associated apartment $A$ is the geometric realisation of a simplicial complex on a vertex set isomorphic to the integral part $\faktor{\mathbb{Z}^d}{\mathbb{Z}\textbf{1}}$ of the tropical torus. The vertex set consists of all homothety classes of lattices in $V$ having diagonal form in $e_1,\dots,e_d$. More precisely, the following map is a bijection:
\begin{equation}
\label{map}
\begin{aligned}
 f: A\cap\mathfrak{B}_d^0&\longrightarrow\faktor{\mathbb{Z}^d}{\mathbb{Z}\textbf{1}}\\
 \{\pi^{m_1}Re_1+\dots+\pi^{m_d}Re_d\}&\longmapsto(-m_1,\dots,-m_d)+\mathbb{Z}\textbf{1}.
 \end{aligned}
\end{equation}
We write $\mathrm{tconv}(\Gamma)$ for the convex hull of the image of the point configuration under this map.
\begin{Definition}
\label{def:conv}
We call a point configuration $\Gamma\subset\mathfrak{B}_d^0$ convex if for $[L],[L']\in\Gamma$, any vertex of the form $[\pi^aL\cap\pi^bL']$ is also in $\Gamma$.\\
The convex hull $\mathrm{conv}(\Gamma)$ of a point configuration $\Gamma$ is the intersection of all convex sets containing $\Gamma$.
\end{Definition}

The following Lemma essentially goes back to \cite{keel2006geometry}, is a special case of Lemma 21 in \cite{zbMATH05225139} and can be found in this version as Lemma 4.1 in \cite{cartwright2011mustafin}.
\begin{Lemma}
\label{lem:brutrop}
 Let $\Gamma$ be a point configuration in one apartment. The map in (\ref{map}) induces a bijection between the lattices in $\mathrm{conv}(\Gamma)$ in the Bruhat-Tits building $\mathfrak{B}_d$ and the lattice points in $\mathrm{tconv}(\Gamma)$.
\end{Lemma}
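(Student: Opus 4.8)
The plan is to check that the bijection $f$ from~(\ref{map}) restricts to a bijection between lattice classes in $\mathrm{conv}(\Gamma)$ and lattice points in $\mathrm{tconv}(\Gamma)$. Since $f$ is already known to be a bijection $A\cap\mathfrak{B}_d^0\to\faktor{\mathbb{Z}^d}{\mathbb{Z}\mathbf{1}}$, it suffices to show that $f$ carries $\mathrm{conv}(\Gamma)$ onto $\mathrm{tconv}(\Gamma)\cap\faktor{\mathbb{Z}^d}{\mathbb{Z}\mathbf{1}}$. Both sides are described as closures under a two-point operation: on the building side, $\mathrm{conv}(\Gamma)$ is generated from $\Gamma$ by repeatedly adjoining all vertices of the form $[\pi^aL\cap\pi^bL']$, and on the tropical side, $\mathrm{tconv}(\Gamma)\cap\faktor{\mathbb{Z}^d}{\mathbb{Z}\mathbf{1}}$ is (by Proposition~\ref{prop:tropline} and the explicit description following it) exactly the set of integer points obtained by iterating the two-point tropical segment construction $\lambda\odot x\oplus\mu\odot y$. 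So the heart of the matter is a single computation: translating the lattice intersection $\pi^aL\cap\pi^bL'$ under $f$ into tropical arithmetic.

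The first step is therefore the key local computation. Fix two lattice classes $[L],[L']$ in the apartment $A$, with diagonal representatives $L=\bigoplus_j\pi^{m_j}Re_j$ and $L'=\bigoplus_j\pi^{m'_j}Re_j$, so $f([L])=(-m_j)_j$ and $f([L'])=(-m'_j)_j$ in $\faktor{\mathbb{Z}^d}{\mathbb{Z}\mathbf{1}}$. Then $\pi^aL\cap\pi^bL'=\bigoplus_j\pi^{\max(a+m_j,\,b+m'_j)}Re_j$, so
\[
f\bigl([\pi^aL\cap\pi^bL']\bigr)=\bigl(-\max(a+m_j,\,b+m'_j)\bigr)_j=\bigl(\min(-a-m_j,\,-b-m'_j)\bigr)_j,
\]
which is precisely $(-a)\odot f([L])\oplus(-b)\odot f([L'])$ in the min-plus torus. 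Thus the vertices adjoined in forming $\mathrm{conv}(\Gamma)$ correspond exactly to the integer points appearing in the tropical segments between image points, and more generally $f$ intertwines the building operation $(L,L',a,b)\mapsto[\pi^aL\cap\pi^bL']$ with the tropical operation $(x,y,\lambda,\mu)\mapsto\lambda\odot x\oplus\mu\odot y$.

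The second step is to leverage this to get the set equality. For the inclusion $f(\mathrm{conv}(\Gamma))\subseteq\mathrm{tconv}(\Gamma)$: by the explicit two-point description recalled after Proposition~\ref{prop:tropline}, every integer point of $\mathrm{tconv}(v,v')$ arises as some $\lambda\odot v\oplus\mu\odot v'$ with $\lambda,\mu\in\mathbb{Z}$ (the breakpoints in that description are integral when $v,v'$ are, and the points on each unit-direction segment between consecutive breakpoints are integral), hence is the $f$-image of a vertex $[\pi^aL\cap\pi^bL']$; iterating, the $f$-image of the building-convex hull lands in the tropical convex hull, and it consists of integer points since $f$ takes values in $\faktor{\mathbb{Z}^d}{\mathbb{Z}\mathbf{1}}$. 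Conversely, every lattice point of $\mathrm{tconv}(\Gamma)$ is reached from $f(\Gamma)$ by finitely many iterations of the two-point segment construction restricted to integer points (this is where one invokes that a tropical polytope is the union of the bounded cells of the hyperplane arrangement, together with the fact that each such integer point lies on an integral tropical segment between two previously constructed integer points), so its preimage under $f$ lies in $\mathrm{conv}(\Gamma)$ by the intertwining property; one should note here that $\mathrm{conv}(\Gamma)$ as defined is closed under the operation $[\pi^aL\cap\pi^bL']$ only a priori for $[L],[L']\in\Gamma$, so a small induction is needed to see it is closed under iterated such operations, which is immediate from the definition of the convex hull as the intersection of all convex sets.

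The main obstacle I anticipate is the bookkeeping around \emph{integrality}: one must be careful that when passing to the min-plus operation $\lambda\odot v\oplus\mu\odot v'$, the relevant breakpoints and intermediate lattice points are genuinely integral (so that they correspond to honest lattice classes and not merely to points of the real tropical polytope), and that the iterative generation of $\mathrm{tconv}(\Gamma)$ by two-point operations actually reaches \emph{all} of its lattice points rather than only those in some smaller subset. Handling this cleanly — rather than the lattice-intersection computation itself, which is routine — is the crux, and it is exactly the content of the cited Lemma~21 of \cite{zbMATH05225139} and Lemma~4.1 of \cite{cartwright2011mustafin}, so in practice one may either reproduce that argument or cite it.
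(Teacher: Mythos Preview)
The paper does not give its own proof of this lemma: it simply cites it, noting that it goes back to \cite{keel2006geometry}, is a special case of Lemma~21 in \cite{zbMATH05225139}, and appears in this form as Lemma~4.1 in \cite{cartwright2011mustafin}. Your proposal is a correct reconstruction of the standard argument behind those references --- the key identity
\[
f\bigl([\pi^aL\cap\pi^bL']\bigr)=(-a)\odot f([L])\oplus(-b)\odot f([L'])
\]
is exactly the computation that makes the correspondence work, and your observation that the only genuinely delicate point is the integrality bookkeeping (that every lattice point of $\mathrm{tconv}(\Gamma)$ is reached by iterated integral two-point combinations) is accurate. One small omission worth making explicit: you should note that for $[L],[L']\in A$ the lattice $\pi^aL\cap\pi^bL'$ is again diagonal, so $\mathrm{conv}(\Gamma)$ stays inside the apartment $A$ and $f$ is defined on all of it; otherwise Definition~\ref{def:conv} takes place in the full building and the restriction of $f$ would not a~priori apply. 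With that caveat, your argument matches what the cited sources do, and since the paper itself defers to those citations there is nothing further to compare.
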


\begin{Remark}
We call a point configuration $\Gamma\subset\mathfrak{B}_d^0$ that is contained in the same apartment $A$ in tropical general position, if the point configuration \[\{f(a):a\in A\}\subset\faktor{\mathbb{Z}^d}{\mathbb{Z}\textbf{1}}\subset\faktor{\mathbb{R}^d}{\mathbb{R}\textbf{1}}\] is in tropical general position.
\end{Remark}

\subsection{Images of rational maps}
\label{subsec:im}
Let $W$ be a $d-$dimensional vector space over $k$ and let $(W_i)_{i=1}^n$ be a tuple of subvectorspaces of $W$ with \[\bigcap_{i=1}^nW_i=\langle0\rangle.\] In \cite{Li13}, images of rational maps of the form \[\mathbb{P}(W)\dashrightarrow\mathbb{P}(\faktor{W}{W_1})\times\cdots\times\mathbb{P}(\faktor{W}{W_n})\] were studied. In particular, the Hilbert function and the multidegrees were computed. For every non-empty $I\subset[n]:=\{1,\dots,n\}$ we define \[d_I=\mathrm{dim}\bigcap_{i\in I}W_i.\]
For every $h\in\mathbb{Z}_{\ge0}$ we define $M(h)$ to be the set \[M(h)=\{(m_1,\dots,m_n)\in\mathbb{Z}_{\ge0}^n:\sum_{i=1}^nm_i=h\textrm{ and }d-\sum_{i\in I}m_i>d_I\textrm{ for every non-empty }I\subset[n]\}.\]

\begin{Remark}
There are several equivalent notions of the multidegree of a variety $X$ in a product of projective spaces $\left(\mathbb{P}_k^{d-1}\right)^n$. One possibility is to consider the multigraded Hilbert polynomial $h_X$: Let $x^u$ be a monomial of maximal degree in $h_X$ and $c_u$ be the coefficient. The multidegree function takes value $\frac{c_u}{u!}$ at $u$, where $u!=u_1!\cdots u_n!$.\vspace{\baselineskip}\\
Another way to describe the multidegree is to consider the intersection of $X$ with a system of $u_i$ general linear equations in the $i-$th factor of $\prod_{i=1}^n\mathbb{P}\left(\faktor{k^d}{W_i}\right)$, where we choose the $u_i$ such that the intersection is finite. Then the value of the multidegree function at $u=(u_1,\dots,u_n)$ is the cardinality of this intersection product. For a more thorough introduction in terms of Chow classes, see e.g. \cite{castillo2016representable}. We denote the set of multidegrees by \[\mathrm{multDeg}(X)=\{u\in\mathbb{Z}_{\ge0}^n:\textrm{the multidegree function is non-zero at }u\}.\]
\end{Remark}

\begin{thm}{\cite{Li13}}
\label{thm:rat}
Assume $k$ is algebraically closed. Set $p=\mathrm{max}\{h:M(h)\neq0\}$. The dimension of $X(W,W_1,\dots,W_n)$ is $p$. Its multidegree function takes value one at the integer vectors in $M(p)$ and $0$ otherwise. The Hilbert function of $X(W,W_1,\dots,W_n)$ is \[\sum_{S\subset M(p)}(-1)^{|S|-1}\prod_{j=1}^n{{u_i+\ell_{S,i}} \choose \ell_{S,i}},\] where the $u_i$ are the variables and $\ell_{S,i}$ is the smallest $i-$th components of all elements of $S$. Moreover, $X(W,W_1,\dots,W_n)$ is Cohen-Macaulay. 
\end{thm}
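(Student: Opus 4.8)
The plan is to translate all four assertions into statements about the multigraded homogeneous coordinate ring of $X:=X(W,W_1,\dots,W_n)$ and then to compute that ring by degenerating $X$ to a reduced union of products of coordinate projective subspaces indexed by $M(p)$. Concretely, write $S=\mathrm{Sym}(W^*)$ and let $J_i\subseteq S$ be the homogeneous ideal of the linear subspace $\mathbb{P}(W_i)\subseteq\mathbb{P}(W)$; then $(W/W_i)^*=W_i^\perp\subseteq W^*$ and $\mathrm{Sym}^{a_i}((W/W_i)^*)$ is the degree-$a_i$ graded piece of $J_i^{a_i}$. The rational map $\mathbb{P}(W)\dashrightarrow\mathbb{P}(W/W_i)$ is defined by the linear subsystem $|W_i^\perp|\subseteq|\mathcal{O}_{\mathbb{P}(W)}(1)|$, so pulling back multigraded sections identifies, for $a=(a_1,\dots,a_n)$ with $|a|=\sum_i a_i$, the degree-$a$ graded piece of the homogeneous coordinate ring of $X$ (which is reduced, being the closure of the image) with the image of the multiplication map $\bigotimes_i\mathrm{Sym}^{a_i}((W/W_i)^*)\to\mathrm{Sym}^{|a|}(W^*)$, i.e. with the degree-$|a|$ part of the product ideal $J_1^{a_1}\cdots J_n^{a_n}$. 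Hence the multigraded Hilbert function of $X$ is $a\mapsto\dim_k (J_1^{a_1}\cdots J_n^{a_n})_{|a|}$, and dimension, multidegree and Cohen--Macaulayness all become assertions about the ring $R=\bigoplus_a (J_1^{a_1}\cdots J_n^{a_n})_{|a|}$, a diagonal subalgebra of the multi-Rees algebra of $J_1,\dots,J_n$.

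Next I would fix a basis of $W$ adapted to the configuration $\{W_i\}$ — one realizing each $\bigcap_{i\in I}W_i$ as a coordinate subspace of dimension $d_I$, which is possible since only the numbers $d_I$ occur in the statement — together with a suitable monomial order, and pass to the induced flat (Gröbner) degeneration of $R$. The claim to prove is that the special fibre is the Stanley--Reisner ring of a simplicial complex whose facets are exactly the vectors $m\in M(p)$, so that $X$ degenerates to the reduced union $\bigcup_{m\in M(p)}Y_m$ with $Y_m\cong\prod_i\mathbb{P}^{m_i}$ a product of coordinate projective subspaces of the factors $\mathbb{P}(W/W_i)$. The inequalities $d-\sum_{i\in I}m_i>d_I$ defining $M(h)$ should emerge precisely as the conditions under which such a coordinate product is not absorbed into the vanishing loci coming from the $W_i$: for a subset $I$, the partial product $\prod_{i\in I}\mathbb{P}^{m_i}$ must fit into the codimension $d-d_I$ left over after dividing $W$ by $\bigcap_{i\in I}W_i$. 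Granting this, $\dim X=\max\{\sum_i m_i : m\in M(p)\}=p$.

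The remaining assertions then follow formally. Multidegrees are constant in flat families, so $\mathrm{multDeg}(X)$ equals the multidegree of $\bigcup_{m\in M(p)}Y_m$, which is the sum over its top-dimensional components; every $m\in M(p)$ has $|m|=p$, and $Y_m\cong\prod_i\mathbb{P}^{m_i}$ has multidegree $1$ in multidegree $m$ and $0$ otherwise, so the multidegree function of $X$ is $1$ on $M(p)$ and $0$ elsewhere. For Cohen--Macaulayness I would show the simplicial complex is shellable — ordering the facets $m\in M(p)$ reverse-lexicographically and verifying the shelling condition — so that its Stanley--Reisner ring is Cohen--Macaulay, and then use that Cohen--Macaulayness lifts from the special fibre of a flat degeneration to the general fibre to conclude that $R$, hence $X$, is Cohen--Macaulay. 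Finally, the Hilbert function: since $\bigcap_{m\in S}Y_m=\prod_i\mathbb{P}^{\ell_{S,i}}$ with $\ell_{S,i}=\min_{m\in S}m_i$ (coordinate subspaces intersect in the coordinatewise-minimum subspace), and higher cohomology of $\mathcal{O}(u)$ for $u\ge 0$ vanishes on products of projective spaces, the Mayer--Vietoris/Čech complex of the closed cover $\{Y_m\}$ gives that the Euler characteristic of $\mathcal{O}(u)$ on the union is $\sum_{\emptyset\neq S\subseteq M(p)}(-1)^{|S|-1}\prod_{i=1}^n\binom{u_i+\ell_{S,i}}{\ell_{S,i}}$; the Stanley--Reisner ring being Cohen--Macaulay, this Euler characteristic is in fact its Hilbert function, and flatness transports it back to $R$, yielding the stated formula.

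The main obstacle is the degeneration step: exhibiting the flat family and proving that its special fibre is exactly the reduced union $\bigcup_{m\in M(p)}Y_m$ — equivalently, that the relevant initial ideal is a squarefree monomial ideal with facet set $M(p)$. This is where the precise inequalities $d-\sum_{i\in I}m_i>d_I$ must be matched to a geometric containment statement, and where the adapted basis and monomial order must be chosen so that the outcome depends only on the intersection data $(d_I)_I$ and not on the particular subspaces. Should a squarefree initial ideal not be directly available, one can instead argue with the (possibly nonreduced) special fibre together with a separate reducedness argument, or build a resolution of $R$ by hand using that products of ideals of linear subspaces admit Koszul/Eagon--Northcott-type resolutions, at the cost of a considerably longer computation.
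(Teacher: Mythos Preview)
This theorem is not proved in the paper at all: it is quoted from \cite{Li13} and used as a black box, with no argument given. So there is no ``paper's own proof'' to compare your proposal against.

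That said, a word on your sketch. The overall architecture --- identify the multigraded coordinate ring of $X$ with the diagonal subalgebra $\bigoplus_a (J_1^{a_1}\cdots J_n^{a_n})_{|a|}$, degenerate to a squarefree monomial ideal whose facets are $M(p)$, read off dimension and multidegree, get Cohen--Macaulayness from shellability, and recover the Hilbert function by inclusion--exclusion over the cover $\{Y_m\}$ --- is a sound and standard plan for results of this type. But one step is wrong as written: you cannot in general ``fix a basis of $W$ \dots\ realizing each $\bigcap_{i\in I}W_i$ as a coordinate subspace of dimension $d_I$''. Already for three lines through the origin in $k^3$ spanning only a $2$-plane this is impossible (three collinear points in $\mathbb{P}^2$ cannot all be coordinate points), yet such a configuration has the same intersection data $(d_I)_I$ as three coordinate lines. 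The justification you give --- ``since only the numbers $d_I$ occur in the statement'' --- is circular: that the \emph{answer} depends only on $(d_I)_I$ is exactly what needs to be proved, not a license to change the input.

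What actually has to happen at that step is either (i) a direct argument that $\dim_k (J_1^{a_1}\cdots J_n^{a_n})_{|a|}$ depends only on the $(d_I)_I$, after which one may specialise to a coordinate configuration, or (ii) a genuine Gr\"obner/sagbi degeneration in which the initial ideal is shown to be squarefree with the claimed facets regardless of the position of the $W_i$. You flag this as ``the main obstacle'' at the end, which is accurate; but the earlier passage presents it as already handled by a choice of basis, and that is a gap.
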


We end this section with an example.

\begin{Example}
 Let $V=\mathbb{C}^3$ and $e_1,e_2,e_3$ the standard basis. Moreover, let $V_1=(0),V_2=(e_2,e_3),V_3=(e_2,e_3)$ then \[X=X(V,V_1,V_2,V_3)=\mathbb{P}^2\times{pt}\times{pt}.\] In the notation of Theorem \ref{thm:rat}, we see $p=2=\mathrm{dim}(X)$ with multidegree $\mathrm{multDeg}(X)=M(p)=M(2)=\{(2,0,0)\}$. The multidegree function takes value $1$ at $(2,0,0)$ and $0$ else.
\end{Example}

\subsection{Mustafin Varieties}
\label{sec:must}
Recall Definition \ref{def:musta} for a Mustafin variety $\mathcal{M}(\Gamma)$ associate to a point configuration $\Gamma$ in $\mathfrak{B}_d^0$. In this subsection, we review the theory developed in \cite{cartwright2011mustafin}, where many interesting structural results about $\mathcal{M}(\Gamma)$ and its special fiber were proved. We state results needed for our approach and refer to \cite{cartwright2011mustafin} for a more detailed discussion.

\begin{Proposition}[\cite{cartwright2011mustafin},\cite{mustafin1978nonarchimedean}]
\label{prop:must}
\begin{enumerate}
 \item For a finite set of lattices $\Gamma$, the Mustafin variety $\mathcal{M}(\Gamma)$ is an integral, normal, Cohen-Macaulay scheme which is flat and projective over $R$. Its generic fiber is isomorphic to $\mathbb{P}^{d-1}_{\mathbb{K}}$ and its special fiber is reduced, Cohen-Macaulay and connected. All irreducible components are rational varieties and their number is at most ${n+d-2 \choose d-1}$, where $n=|\Gamma|$.
 \item If $\Gamma$ is a convex set in $\mathfrak{B}_d^0$, then the Mustafin variety is regular and its special fiber consists of $n$ smooth irreducible components that intersect transversely. In this case the reduction complex of $\mathcal{M}(\Gamma)$ is induced by the simplicial subcomplex of $\mathfrak{B}_d$ induced by $\Gamma$.
 \item If $\Gamma$ is a point configuration in one apartment, the components of  the special fiber correspond to maximal cells of a subdivision of the the simplex $|\Gamma|\cdot\Delta_{d-1}$ induced by $\Gamma$.
 \item An irreducible component mapping birationally to the special fiber of $\mathbb{P}(L_i)$ is called a \textit{primary component}. The other components are called \textit{secondary components}. For each $i=1,\dots,n$ there exists such a primary component. A projective variety $X$ arises as a primary component for some point configuration $\Gamma$ if and only if $X$ is the blow-up of $\mathbb{P}_k^{d-1}$ at a collection of $n-1$ linear subspaces.
 \item Let $C$ be a secondary component of $\mathcal{M}(\Gamma)_k$. There exists a vertex $v$ in $\mathfrak{B}_d^0$, such that \[\mathcal{M}(\Gamma\cup\{v\})_k\longrightarrow\mathcal{M}(\Gamma)_k\] restricts to a birational morphism $\tilde{C}\rightarrow C,$  where $\tilde{C}$ is the primary component of $\mathcal{M}(\Gamma\cup\{v\})_k$ corresponding to $v$.
\end{enumerate}
\end{Proposition}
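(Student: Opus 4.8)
The plan is to prove the five assertions by reducing every global question to local computations on the standard affine charts of the multiprojective space $\mathbb{P}(L_1)\times_R\cdots\times_R\mathbb{P}(L_n)$, bootstrapping from the case $n=2$. \textbf{Part (1).} Flatness over $R$ is essentially automatic: $\mathcal{M}(\Gamma)$ is reduced (by construction) and irreducible (being the closure of the irreducible generic fiber $\mathbb{P}(V)\cong\mathbb{P}^{d-1}_K$, which embeds multidiagonally), its unique generic point maps to the generic point of $\operatorname{Spec} R$, so $\pi$ is a nonzero non–zerodivisor in its coordinate rings, and a torsion–free module over a discrete valuation ring is flat; projectivity is inherited from the ambient space, and the isomorphism of the generic fiber with $\mathbb{P}^{d-1}_K$ is built into the definition. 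Connectedness of $\mathcal{M}(\Gamma)_k$ follows from Stein factorisation and the connectedness principle, using that the total space is integral and proper over the local ring $R$ with connected generic fiber. The remaining claims of (1) — normality and Cohen–Macaulayness of $\mathcal{M}(\Gamma)$, and reducedness and Cohen–Macaulayness of $\mathcal{M}(\Gamma)_k$ — can, by flatness over the one–dimensional regular ring $R$ together with Serre's criterion, be checked after restriction to the special fiber, so they become local statements on $\mathcal{M}(\Gamma)_k$, to be verified with the explicit local models below.

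\textbf{The case $n=2$ and Part (2).} By the theory of elementary divisors, choose a basis $e_1,\dots,e_d$ of $V$ in which $L_1=\bigoplus Re_i$ and $L_2=\bigoplus\pi^{a_i}Re_i$ with $0=a_1\le\cdots\le a_d$. In these coordinates $\mathcal{M}(\{L_1,L_2\})$ is cut out by explicit binomial equations inside $\mathbb{P}(L_1)\times_R\mathbb{P}(L_2)$ and is identified with the iterated blow–up of $\mathbb{P}(L_1)=\mathbb{P}^{d-1}_R$ along the nested chain of linear subspaces determined by the jumps of $(a_i)$; in particular it is regular, and its special fiber is a chain of smooth rational components meeting transversely, each a blow–up of $\mathbb{P}^{d-1}_k$ along a union of linear subspaces. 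Iterating over the pairs of an adjacent — and more generally convex — configuration, and checking that only finitely many of these pairwise pictures interact near any given point, yields Part (2): for convex $\Gamma$ the total space is regular, the special fiber has exactly the $n$ smooth transverse components indexed by $\Gamma$, two of them meeting precisely when the corresponding lattice classes are adjacent in $\mathfrak{B}_d$, so the reduction complex is the simplicial subcomplex of $\mathfrak{B}_d$ spanned by $\Gamma$.

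\textbf{Parts (3), (4) and (5).} When all $L_i$ are simultaneously diagonalisable, the defining ideal of $\mathcal{M}(\Gamma)$ is homogeneous for the diagonal torus, so $\mathcal{M}(\Gamma)$ is a toric degeneration; via the standard dictionary between such degenerations and regular subdivisions it is governed by the mixed subdivision of $|\Gamma|\cdot\Delta_{d-1}$ whose cells encode the lattice exponents, and its irreducible components correspond to the maximal cells of this subdivision, equivalently to the vertices of the associated tropical polytope — this is (3); reducedness, Cohen–Macaulayness and the bound $\binom{n+d-2}{d-1}$ then become combinatorial facts about such subdivisions (the vertex bound for tropical polytopes). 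For general $\Gamma$ one passes to $\mathrm{conv}(\Gamma)$ and uses that, locally around any point, $\mathcal{M}(\Gamma)$ is isomorphic to a Mustafin variety of a configuration lying in a single apartment (the convex–hull/tropical–hull dictionary, Lemma~\ref{lem:brutrop}) glued with the $n=2$ models, which propagates normality, Cohen–Macaulayness, reducedness and the component bound. For (4), the primary component attached to $L_i$ is the closure of the graph of the rational map $\mathbb{P}(L_i)_k\dashrightarrow\prod_{j}\mathbb{P}(L_j)_k$ obtained by reducing the transition matrices modulo $\pi$; each coordinate $\mathbb{P}(L_i)_k\dashrightarrow\mathbb{P}(L_j)_k$ is the linear projection away from the projectivised kernel of $L_i/\pi L_i\to L_j/\pi L_j$, so resolving all of them simultaneously exhibits this component as the blow–up of $\mathbb{P}^{d-1}_k$ along $n-1$ linear subspaces; conversely, prescribed subspaces are realised by choosing lattices with the corresponding reductions. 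For (5), by (3) a secondary component $C$ corresponds to a cell of the induced subdivision of $\mathrm{conv}(\Gamma)$, hence to a lattice class $v\in\mathfrak{B}_d^0$; adjoining $v$ makes it a vertex, so the corresponding component $\tilde C$ of $\mathcal{M}(\Gamma\cup\{v\})_k$ is primary, and the refinement morphism $\mathcal{M}(\Gamma\cup\{v\})_k\to\mathcal{M}(\Gamma)_k$ restricts to a birational map $\tilde C\to C$.

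\textbf{Main obstacle.} The chart computations in the $n=2$ and apartment cases are routine; the genuine difficulty is the \emph{global} control needed for (1) in general — showing that gluing these local models produces a scheme that is normal and Cohen–Macaulay with a \emph{reduced} special fiber, rather than one carrying hidden multiplicities or embedded components — together with the sharp count $\binom{n+d-2}{d-1}$. Reducedness of $\mathcal{M}(\Gamma)_k$ is the crux: the cleanest route is to produce, via the subdivision/tropical–polytope description, an explicit flat degeneration whose special fiber is visibly reduced and whose components and incidences are read off combinatorially, and then transfer these properties back to $\mathcal{M}(\Gamma)$ through the local comparison with apartment configurations.
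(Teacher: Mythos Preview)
The paper does not prove this proposition at all: it is stated in the preliminaries (Section~\ref{sec:must}) as a direct citation of results from \cite{cartwright2011mustafin}, with no accompanying argument. There is therefore no ``paper's own proof'' to compare your proposal against.

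That said, a brief comment on your sketch versus the actual arguments in \cite{cartwright2011mustafin}: your outline is broadly in the right spirit for parts (1)--(3) (flatness from integrality over a DVR, the $n=2$ elementary-divisor computation, the toric/mixed-subdivision picture for the one-apartment case), and these are indeed the ingredients used there. However, your reduction of the general case of (1) to local apartment models via ``locally around any point, $\mathcal{M}(\Gamma)$ is isomorphic to a Mustafin variety of a configuration lying in a single apartment'' is not how \cite{cartwright2011mustafin} proceeds and is not obviously correct as stated --- an arbitrary $\Gamma$ need not look like an apartment configuration even locally on the special fiber. In \cite{cartwright2011mustafin} reducedness and Cohen--Macaulayness of the special fiber for general $\Gamma$ are obtained instead by a Gr\"obner/initial-ideal argument combined with a multidegree comparison against the diagonal, not by gluing apartment charts. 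Your own ``Main obstacle'' paragraph correctly identifies this as the genuine gap in your sketch.
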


We now choose coordinates in the same way as in \cite{cartwright2011mustafin}. Consider the diagonal map \[\Delta:\mathbb{P}(V)\longrightarrow\mathbb{P}(V)^n=\mathbb{P}(V)\times_K\cdots\times_K\mathbb{P}(V).\] The image of $\Delta$ is the subvariety of of $\mathbb{P}(V)^n$ cut out by the ideal generated by the $2\times2$ minors of a matrix $X=(x_{ij})_{\substack{i=1,\dots,d\\j=1,\dots,n}}$ of unknowns, where the $j$th column corresponds to coordinates in the $j$th factor.\\
Start with an element $g\in \mathrm{GL}(V)$, it is represented by an invertible $n\times n$ matrix over $K$. It induces a dual map $g^t:V^*\to V^*$ and thus a morphism $g:\mathbb{P}(V)\to\mathbb{P}(V)$. For $n$ elements $g_1,\dots,g_n\in\mathrm{GL}(V)$, the image of \[\mathbb{P}(V)\xrightarrow{\Delta}\mathbb{P}(V)^n\xrightarrow{g_1^{-1}\times\dots\times g_n^{-1}}\mathbb{P}(V)^n\] is the subvariety of $\mathbb{P}(V)^n$ cut out by the multihomogeneous prime ideal \[I_2((g_1,\dots,g_n)(X))\subset K[X],\] where $(g_1,\dots,g_n)(X)$ is the matrix whose $j$th column is given by \[g_j\begin{pmatrix}x_{1j}\\\vdots\\x_{dj}\end{pmatrix}.\]

Consider a reference lattice $L=Re_1+\cdots+Re_d$. For any configuration $\Gamma=\{L_1,\dots,L_n\}$ of lattices in $\mathfrak{B}_d^0$, we choose $g_i$, such that $g_iL=L_i$ for all $i$. The following diagram commutes:

\[\begin{tikzcd}
\mathbb{P}(V) \arrow[d] \arrow[rrr, "(g_1^{-1} \times\dots\times g_n^{-1})\circ\Delta"]& & & \mathbb{P}(V)^n \arrow[d]\\
\prod_{R}\mathbb{P}(L_i) \arrow[rrr, "(g_1^{-1} \times\dots\times g_n^{-1})"]& & & \mathbb{P}(L)^n
\end{tikzcd}\]

It follows immediately that the Mustafin Variety $\mathcal{M}(\Gamma)$ is isomorphic to the subscheme of $\mathbb{P}(L)^n\cong(\mathbb{P}^{d-1}_R)^n$ cut out by the multihomogeneous ideal $I_2\left((g_1,\dots,g_n)(X)\right)\cap R[X]$ in $R[X]$.

\begin{Definition}
A configuration $\Gamma=\{L_1,\dots,L_n\}$ in $\mathfrak{B}_d$ is of \textit{monomial type} if there exist bases for the R-modules $L_1,\dots,L_n$, such that the multi-homogeneous ideal in $k[X]$ defining $\mathcal{M}(\Gamma)_k$ is generated by monomials in the dual bases.
\end{Definition}

As proved in \cite{cartwright2011mustafin} point configurations $\Gamma$ in one apartment and tropical general position yield interesting properties of the special fiber of the corresponding Mustafin varieties. 

\begin{Proposition}[\cite{cartwright2011mustafin}]
Let $\Gamma$ be a point configurations. The following are equivalent:
 \begin{enumerate}
  \item The configuration $\Gamma$ is in general position.
 \item The special fiber $\mathcal{M}(\Gamma)_k$ is of monomial type.
  \item $\mathcal{M}_k$ is defined by a monomial ideal in $k[X]$ for the coordinates chosen before.
  \item The number of secondary components of $\mathcal{M}(\Gamma)_k$ equals ${{n+d-2}\choose{d-1}}-n$.
 \end{enumerate}
 Thus, in the case of tropical general position, the number of components of $\mathcal{M}(\Gamma)_k$ is ${{n+d-2}\choose{d-1}}$. 
\end{Proposition}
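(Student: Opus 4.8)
The plan is to establish the chain of equivalences $(1)\Leftrightarrow(2)\Leftrightarrow(3)\Leftrightarrow(4)$ and then deduce the final count as a corollary of $(4)$ together with Proposition \ref{prop:must}(1). The implication $(2)\Leftrightarrow(3)$ is essentially a matter of unwinding definitions: by construction $\mathcal{M}(\Gamma)$ sits inside $(\mathbb{P}^{d-1}_R)^n$ as the subscheme cut out by $I_2((g_1,\dots,g_n)(X))\cap R[X]$, and its special fiber is cut out by the reduction of this ideal mod $\pi$; the definition of \emph{monomial type} asks precisely that, after a change of bases for the $L_i$ (equivalently, after replacing each $g_i$ by $g_i h_i$ with $h_i\in\mathrm{GL}_d(R)$), this reduced ideal in $k[X]$ is generated by monomials in the dual bases. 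So $(2)$ and $(3)$ differ only by whether one allows such a base change, and one direction is trivial; for the other I would observe that tropical general position is invariant under the $\mathrm{GL}_d(R)$-action on each factor, so if the ideal is monomial in \emph{some} coordinates it can be made monomial in the chosen ones.

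For $(1)\Leftrightarrow(4)$ I would use the description of the special fiber from the earlier sections (Theorem \ref{thm:equ}(2)): when $\Gamma$ lies in one apartment, the irreducible components of $\mathcal{M}(\Gamma)_k$ are indexed by the polyhedral vertices $V(\mathrm{conv}(\Gamma))$, equivalently by the lattice points of $\mathrm{tconv}(\Gamma)$ that are vertices of the induced subdivision of $|\Gamma|\cdot\Delta_{d-1}$ (Lemma \ref{lem:brutrop}, Proposition \ref{prop:must}(3)). The primary components are the $n$ points $v_1,\dots,v_n$ themselves, so the secondary components correspond to the remaining vertices. The key combinatorial input is that a configuration of $n$ points in $\faktor{\mathbb{R}^d}{\mathbb{R}\mathbf{1}}$ is in tropical general position if and only if the associated tropical polytope is a \emph{generic} tropical polytope, whose cell complex is the bounded complex of an arrangement of $n$ tropical hyperplanes in general position; by the Develin--Sturmfels correspondence with regular subdivisions of $\Delta_{n-1}\times\Delta_{d-1}$, the number of vertices of such a generic tropical polytope is exactly $\binom{n+d-2}{d-1}$. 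Subtracting the $n$ primary ones gives $\binom{n+d-2}{d-1}-n$ secondary components, which is $(4)$; conversely, any failure of general position forces a coincidence of hyperplane apices or a non-transverse intersection, strictly dropping the vertex count below this maximum, so $(4)$ forces $(1)$.

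Finally, $(1)\Leftrightarrow(2)$ I would get by combining the above: if $\Gamma$ is in general position then by Proposition \ref{prop:must}(3) the subdivision of $|\Gamma|\cdot\Delta_{d-1}$ is the generic mixed subdivision, which is known to be a \emph{monomial} degeneration — each maximal cell corresponds to a single coordinate monomial, and the defining ideal of the union of toric strata is generated by the corresponding monomials — so $(2)$ holds; conversely, monomial type forces the special fiber to be a union of coordinate subspaces glued torically, which by the component count forces the maximal number $\binom{n+d-2}{d-1}$ of components, hence $(1)$ via $(4)$. The concluding sentence is then immediate: under general position the total number of components is the number of secondary ones plus the $n$ primary ones, namely $\binom{n+d-2}{d-1}-n+n=\binom{n+d-2}{d-1}$, matching the upper bound in Proposition \ref{prop:must}(1).

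The main obstacle I anticipate is the combinatorial step showing that tropical general position is \emph{exactly} the condition for the tropical polytope $\mathrm{tconv}(\Gamma)$ to have the maximal number $\binom{n+d-2}{d-1}$ of vertices — that is, proving both that general position achieves the maximum (which needs the precise dictionary between tropical hyperplane arrangements, regular subdivisions of a product of simplices, and the cell structure of the Mustafin special fiber) and that any degeneracy strictly decreases the count. The forward direction is clean given the Develin--Sturmfels machinery, but the converse requires checking that a tropically singular maximal minor genuinely merges two vertices of the polytope rather than merely altering lower-dimensional faces; I would handle this by a local analysis near the apex of the offending pair of tropical hyperplanes, reducing to the two-point case already treated via Proposition \ref{prop:tropline} in subsection \ref{sub:twola}.
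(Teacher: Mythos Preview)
The paper does not prove this proposition at all: it is quoted from \cite{cartwright2011mustafin} as a preliminary fact in Section~\ref{sec:must}, with no argument supplied. There is therefore no proof in the present paper to compare your proposal against; the proof lives in \cite{cartwright2011mustafin}.

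Your proposal nonetheless takes a route that the cited source could not have taken: you invoke Theorem~\ref{thm:equ}(2)---the main new result of \emph{this} paper---to identify the irreducible components of $\mathcal{M}(\Gamma)_k$ with the polyhedral vertices of $\mathrm{tconv}(\Gamma)$, and then appeal to Develin--Sturmfels vertex counting. This is not logically circular (the proof of Theorem~\ref{thm:equ}(2) in Section~\ref{sec:proof1} does not use the proposition in question), but it is structurally backwards, using a Section~3 main theorem to rederive a Section~2 preliminary. The original argument in \cite{cartwright2011mustafin} instead works directly with the Gr\"obner/initial-ideal description of the $2\times2$-minors ideal and the dictionary between tropical polytopes, mixed subdivisions of $n\Delta_{d-1}$, and regular triangulations of $\Delta_{n-1}\times\Delta_{d-1}$.

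On the content of your sketch, two genuine gaps remain. First, your $(2)\Leftrightarrow(3)$ is not just definitional: condition~(2) permits an arbitrary $\mathrm{GL}_d(R)$-change of basis in each $L_i$, and the invariance of tropical general position under that action does not by itself force a monomial ideal in one basis to stay monomial in another; in \cite{cartwright2011mustafin} this direction is obtained indirectly via the component count rather than by a coordinate-change argument. Second, your proposed reduction of $(4)\Rightarrow(1)$ to the two-point case of Proposition~\ref{prop:tropline} does not work as stated: a tropically singular maximal minor can involve three or more of the $v_i$ while every pair remains generic, so the degeneracy need not be visible in any two-point tropical line segment. The actual argument requires the global correspondence between tropical general position and genericity of the induced subdivision of $\Delta_{n-1}\times\Delta_{d-1}$.
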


We finish this section by giving a moduli functor interpretation of Mustafin varieties.

We note that Faltings' notion of projective space is the dual construction to the one used here, i.e. in \cite{faltings2001toroidal} $\mathbb{P}(V)$ parametrises hyperplanes instead of lines. This yields the notion of quotient bundles in \textit{loc. cit.} instead of sub-bundles, which we use here:

\begin{thm}[\cite{faltings2001toroidal}]
\label{thm:falt}
Let $\Gamma$ be a convex point configuration. Then $\mathcal{M}(\Gamma)$ represents the following functor: To every $R-$scheme $S$ we associated the set of all tuples of line bundles $(l(L))_{L\in\Gamma}$, such that each inclusion $L_i\subset L_j$ (for $L_i,L_j\subset\Gamma$) maps $l(L_i)$ to $l(L_j)$.
\end{thm}

\subsection{(Pre)linked Grassmannians}
\label{sec:link}
In this subection, we discuss the basic theory surrounding (pre)linked Grassmannians, developed in the Appendix of \cite{osserman2006limit} and the Appendix of \cite{osserman2014limit} in the context of the theory of limit linear series. We begin with the following situation:\\
Let $S$ be any base scheme and $\mathcal{E}_1,\dots,\mathcal{E}_n$ vector bundles on $S$, each of rank $d$. We have morphisms $f_i:\mathcal{E}_i\rightarrow\mathcal{E}_{i+1}$, $g_i:\mathcal{E}_{i+1}\rightarrow\mathcal{E}_i$ and a positive integer $r<d$. For an $S-$scheme $T$, we denote the pull-backs of the vector bundles by $\mathcal{E}_{i,T}$ and the maps induced  by the pull-backs by $f_{i,T}:\mathcal{E}_{i,T}\to\mathcal{E}_{i+1,T}$. The functor studied in \cite{osserman2006limit} is described as follows:
\begin{Definition}
\label{def:fun}
We define $\mathcal{LG}(r,\{\mathcal{E}_i\}_i,\{f_i,g_i\}_i)$ to be the functor associating to each $S$-scheme $T$ the set of sub-bundles $V_1,\dots,V_n$ of $\mathcal{E}_{1,T},\dots,\mathcal{E}_{n,T}$ of rank $r$ that satisfy $f_{i,T}(V_i)\subset V_{i+1}$ and $g_{i,T}(V_{i+1})\subset V_{i}$ for all $i$.
\end{Definition}

\begin{figure}[h]
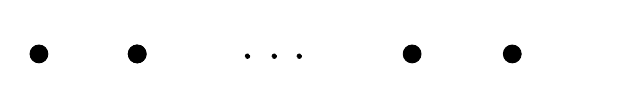
\caption{The underlying graph of the linked Grassmannian together with the input data.}
\end{figure}

This functor is representable by a projective scheme:
\begin{Lemma}
\label{lem:link}
The functor $\mathcal{LG}(r,\{\mathcal{E}_i\}_i,\{f_i,g_i\}_i)$ is representable by a projective scheme \[\mathrm{LG}=\mathrm{LG}(r,\{\mathcal{E}_i\}_i,\{f_i,g_i\}_i)\] over $S$, which is naturally a closed subscheme of a product $G$ of Grassmannians schemes over $S$; $G$ is smooth and projective over $S$ of relative dimension $nd(d-r)$. More precisely: Let $G_i$ be the Grassmannian of rank $r$ sub-bundles of $\mathcal{E}_i$, then $\mathrm{LG}$ is a closed subscheme of $G=G_1\times\cdots\times G_n$.
\end{Lemma}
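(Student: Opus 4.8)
The plan is to realise $\mathcal{LG}(r,\{\mathcal{E}_i\}_i,\{f_i,g_i\}_i)$ as a closed subfunctor of the functor represented by a product of Grassmannian bundles, and then to invoke the elementary principle that a closed subfunctor of a functor represented by an $S$-scheme projective over $S$ is itself represented by a closed subscheme, hence by a scheme projective over $S$. First I would recall the standard theory of Grassmannian bundles: for each $i$ the functor of rank-$r$ sub-bundles of $\mathcal{E}_i$ is represented by a scheme $G_i$ which is smooth and projective over $S$ and carries a universal rank-$r$ sub-bundle $\mathcal{V}_i\subset\mathcal{E}_{i,G_i}$ with quotient $\mathcal{Q}_i=\mathcal{E}_{i,G_i}/\mathcal{V}_i$ of rank $d-r$; crucially, the formation of $\mathcal{V}_i$ commutes with base change, which is exactly the representability statement. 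Setting $G=G_1\times_S\cdots\times_S G_n$, smoothness and projectivity over $S$ are stable under fibre products over $S$ and composition, and the relative dimension of $G$ is $n$ times that of $\mathrm{Gr}(r,\mathcal{E}_i)$, as recorded in the statement. Below I write $\mathcal{V}_i,\mathcal{Q}_i$ also for the pullbacks of these bundles to $G$, and $\mathcal{E}_{i,G}$ for the pullback of $\mathcal{E}_i$.

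Next I would encode the incidence conditions as vanishing loci of bundle maps on $G$. For $1\le i\le n-1$ form the composite $\psi_i^{f}\colon\mathcal{V}_i\hookrightarrow\mathcal{E}_{i,G}\xrightarrow{f_{i,G}}\mathcal{E}_{i+1,G}\twoheadrightarrow\mathcal{Q}_{i+1}$, regarded as a global section of the vector bundle $\mathcal{V}_i^{\vee}\otimes\mathcal{Q}_{i+1}$, and similarly $\psi_i^{g}\colon\mathcal{V}_{i+1}\hookrightarrow\mathcal{E}_{i+1,G}\xrightarrow{g_{i,G}}\mathcal{E}_{i,G}\twoheadrightarrow\mathcal{Q}_i$. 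Let $Z_i^{f}$, respectively $Z_i^{g}$, be the scheme-theoretic zero locus of $\psi_i^{f}$, respectively $\psi_i^{g}$, i.e.\ the closed subscheme of $G$ defined by the image of the transpose map $\mathcal{V}_i\otimes\mathcal{Q}_{i+1}^{\vee}\to\mathcal{O}_G$, respectively its analogue. The point to verify carefully is the functorial one: a morphism $h\colon T\to G$ factors through $Z_i^{f}$ if and only if $h^{*}\psi_i^{f}=0$, and, combining this with the base-change compatibility of $\mathcal{V}_i$ and $\mathcal{Q}_i$, the $T$-point of $G$ given by sub-bundles $V_1,\dots,V_n$ lies in $Z_i^{f}$ exactly when $f_{i,T}(V_i)\subset V_{i+1}$, and lies in $Z_i^{g}$ exactly when $g_{i,T}(V_{i+1})\subset V_i$. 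Hence $\mathrm{LG}:=\bigcap_{i=1}^{n-1}Z_i^{f}\cap\bigcap_{i=1}^{n-1}Z_i^{g}$, with its induced scheme structure, is a closed subscheme of $G$ whose functor of points is precisely $\mathcal{LG}(r,\{\mathcal{E}_i\}_i,\{f_i,g_i\}_i)$; being closed in the $S$-projective scheme $G$, it is projective over $S$.

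I do not anticipate a serious obstacle; the argument is standard once it is set up correctly. The only genuinely delicate point is to stay scheme-theoretic rather than merely set-theoretic: one must check that the naive intersection of the zero loci $Z_i^{f},Z_i^{g}$, equipped with the scheme structure coming from the sum of their ideals, represents the prescribed functor, and this rests on the universal property of the zero scheme of a section of a vector bundle together with the compatibility of the universal sub-bundle of a Grassmannian bundle with base change. Everything else — representability, smoothness and projectivity of $G$, and the relative dimension count — is part of the standard package for Grassmannian bundles.
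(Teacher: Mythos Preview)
Your argument is correct and is the standard one: realise the linked Grassmannian functor as a closed subfunctor of the product of Grassmannian bundles by writing the compatibility conditions $f_{i,T}(V_i)\subset V_{i+1}$ and $g_{i,T}(V_{i+1})\subset V_i$ as the vanishing of the composites $\mathcal{V}_i\to\mathcal{E}_{i,G}\to\mathcal{E}_{i+1,G}\to\mathcal{Q}_{i+1}$ (and analogously for $g_i$), then intersect the resulting zero loci. The paper does not give its own proof of this lemma; it is quoted from the Appendix of \cite{osserman2006limit}, where exactly this argument is carried out. So there is nothing to compare beyond saying that your approach matches the source.

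One small remark: you defer the relative-dimension count to the statement, but the formula $nd(d-r)$ written there is a typo --- each $G_i=\mathrm{Gr}(r,\mathcal{E}_i)$ has relative dimension $r(d-r)$, so $G$ has relative dimension $nr(d-r)$. This does not affect your proof, but you should not simply cite the stated number.
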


\begin{Remark}
If there is no confusion about the data, we denote $\mathrm{LG}(r,\{\mathcal{E}_i\}_i,\{f_i,g_i\}_i)$ simply by $\mathrm{LG}$.
\end{Remark}

In order to study the dimension of $\mathrm{LG}$, additional hypothesis were made in \cite{osserman2006limit}.

\begin{Definition}
\label{def:link}
We say that $\mathrm{LG}(r,\{\mathcal{E}_i\}_i,\{f_i,g_i\}_i)$ is a \textbf{linked Grassmannian of length $n$} if $S$ is integral, Cohen-Macaulay and the following conditions on $f_i,g_i$ are satisfied:
\begin{enumerate}[(I)]
\item There exists an $s\in\Gamma(S,\mathcal{O}_S)$, such that for all $i$ \[f_i\circ g_i=s\cdot\mathrm{id}_{\mathcal{E}_{i+1}}\ \mathrm{and}\ g_i\circ f_i=s\cdot\mathrm{id}_{\mathcal{E}_i}.\]
\item On the fibers of the $\mathcal{E}_i$ at any point in the zero-locus of $s$, we have  \[\mathrm{Ker}f_i=\mathrm{Im}g_i\ \mathrm{and}\ \mathrm{Ker}g_i=\mathrm{Im}f_i.\]Equivalently, for any $i$ and given integers $r_1$ and $r_2$ such that $r_1+r_2<d$, the closed subscheme of $S$ obtained as the locus where $f_i$ has rank less than or equal to $r_1$ and $g_i$ has rank less than or equal to $r_2$ is empty.
\item At any point of $S$, we have \[\mathrm{Im}f_i\cap\mathrm{Ker}f_{i+1}=\{0\}\ \mathrm{and}\ \mathrm{Im}g_{i+1}\cap\mathrm{Ker}g_i=\{0\}.\]Equivalently, for any integer $r'$ and any $i$, we have locally closed subschemes of $S$ corresponding to the locus where $f_i$ has rank exactly $r'$ and $f_{i+1}f_i$ has rank less than or equal to $r'-1$ (similarly for $g_i$). Then we require all those subschemes to be empty.
\end{enumerate}
We will call a tuple $(\mathcal{E}_i,\{f_i,g_i\}_i)$ satisfying these conditions an $s-$linked chain. 
\end{Definition}

An important notion for our discussion are \textit{the exact points of a linked Grassmannian}.
\begin{Definition}
A point of a linked Grassmannian is exact if the corresponding  collection of subvectorspaces $V_i$ satisfy the conditions that $\left.\mathrm{ker}g_i\right|_{V_{i+1}}\subset f_i(V_i)$ and $\left.\mathrm{ker}f_i\right|_{V_{i}}\subset g_i(V_{i+1})$ for all $i$.
\end{Definition}

The following Proposition was proved in Lemma A.11 and Lemma A.12 in \cite{osserman2006limit}.
\begin{Proposition}
\label{prop:facts}
For linked Grassmannians, we have the following description of exact points:
\begin{enumerate}[(i)]
\item The exact points form an open subscheme of $\mathrm{LG}$ and are naturally described as the complement of the closed subscheme on which $\left.\mathrm{rk}f_i\right|_{V_i}+\left.\mathrm{rk}g_i\right|_{V_{i+1}}<r$ for some $i$.
\item In the case $s=0$, we can describe exact points as those with $\left.\mathrm{rk}f_i\right|_{V_i}+\left.\mathrm{rk}g_i\right|_{V_{i+1}}=r$ for all $i$ (which is even true for arbitrary scheme-valued points).
\end{enumerate}

Exact points have the following properties:
\begin{enumerate}[(i)]
\setcounter{enumi}{2}
\item The exact points are dense in $\mathrm{LG}$ and indeed dense in every fiber of $\mathrm{LG}\to S$.
\item Every exact point $x\in\mathrm{LG}$ is a smooth point of $\mathrm{LG}$ over $S$.
\end{enumerate}
The non-exact points of the fibers may also be described nicely:
\begin{enumerate}[(i)]
\setcounter{enumi}{4}
\item The non-exact points of a fiber  are precisely the intersections of the components of that fiber.
\end{enumerate}
\end{Proposition}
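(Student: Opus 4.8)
This is a recollection of Lemmas A.11 and A.12 of \cite{osserman2006limit}, and the plan is to reconstruct that argument. The organizing principle is that exactness is secretly a \emph{rank condition} on the restrictions $f_i|_{V_i}$ and $g_i|_{V_{i+1}}$: once this translation is in place, the openness statement in (i), the $s=0$ description (ii), the smoothness (iv), and the component structure (v) all follow from generalities about determinantal loci, while the density in (iii) requires a genuine deformation argument and is where the real work lies.

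\textbf{The rank reformulation.} First I would fix a scheme-valued point of $\mathrm{LG}$ with data $(V_i)$ and an index $i$. Since $f_i(V_i)\subset V_{i+1}$, the restriction $f_i|_{V_i}$ has kernel $V_i\cap\ker f_i$, so $\mathrm{rk}\,f_i|_{V_i}=r-\dim(V_i\cap\ker f_i)$, and symmetrically for $g_i|_{V_{i+1}}$. Over $\{s\ne 0\}$ both $f_i,g_i$ are isomorphisms by condition (I), so both restrictions have rank $r$ and the exactness inclusions hold trivially. Over the zero locus of $s$, condition (II) gives $\mathrm{Im}\,f_i=\ker g_i$ and $\mathrm{Im}\,g_i=\ker f_i$, so the inclusions $f_i(V_i)\subset V_{i+1}\cap\ker g_i$ and $g_i(V_{i+1})\subset V_i\cap\ker f_i$ are automatic; comparing dimensions yields $\mathrm{rk}\,f_i|_{V_i}+\mathrm{rk}\,g_i|_{V_{i+1}}\le r$, with equality exactly when both inclusions are equalities, that is, exactly when $\ker g_i|_{V_{i+1}}\subset f_i(V_i)$ and $\ker f_i|_{V_i}\subset g_i(V_{i+1})$. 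This proves (ii), and in general shows that a point is exact iff $\mathrm{rk}\,f_i|_{V_i}+\mathrm{rk}\,g_i|_{V_{i+1}}\ge r$ for every $i$. Since the locus where $\mathrm{rk}\,f_i|_{V_i}\le a$ and $\mathrm{rk}\,g_i|_{V_{i+1}}\le b$ is closed (vanishing of minors of maps of vector bundles on $\mathrm{LG}$), the non-exact locus is the finite union of such loci over $a+b=r-1$, which gives (i).

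\textbf{Density, the main obstacle.} Next I would split $S$ into $\{s\ne 0\}$ and its complement $Z(s)$. Over $\{s\ne 0\}$ the isomorphisms $f_i$ identify all the $V_i$, so $\mathrm{LG}$ is a Grassmannian bundle there, irreducible over each point and everywhere exact. The work is over $Z(s)$, where I must show that every point $(V_i)$ of a fiber is a limit of exact points of that same fiber. Following \cite{osserman2006limit}, the plan is: given an index $i$ with $\mathrm{rk}\,f_i|_{V_i}+\mathrm{rk}\,g_i|_{V_{i+1}}<r$, build a one-parameter family of linked tuples through $(V_i)$ whose general member has a strictly larger value of this sum at $i$ --- concretely by enlarging $V_i$ (or $V_{i+1}$) along a moving line chosen to capture a missing direction of $\ker g_i$ (resp.\ $\ker f_i$), while shedding a direction forced by the inclusions elsewhere --- and then to iterate over the bad indices until every rank sum equals $r$. \emph{The hard part will be} verifying that this deformation can be made to respect all the linking inclusions $f_j(V_j)\subset V_{j+1}$ and $g_j(V_{j+1})\subset V_j$ simultaneously, that it stays a family of rank-$r$ sub-bundles, and that the process terminates; this is the technical core of Osserman's appendix. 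Granting it, the exact locus is open (previous step) and dense in every fiber, and since $S$ is integral and $\mathrm{LG}\to S$ is proper this upgrades to density in $\mathrm{LG}$, which is (iii).

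\textbf{Smoothness and the component structure.} Finally I would treat (iv) and (v) by a local analysis at a point $x$ of a fiber. Reading off the rank tuple $\rho=(\mathrm{rk}\,f_i|_{V_i})_i$ at $x$ and choosing splittings of the $\mathcal{E}_i$ adapted to $\ker f_i$, $\ker g_i$ and the $V_i$, the equations cutting $\mathrm{LG}$ out of the product of Grassmannians become, near $x$, an explicitly solvable system; from it I would read that the stratum of $\mathrm{LG}$ carrying the maximal (exact) rank tuple is smooth over $S$, with the fiber of dimension $r(d-r)$ there --- equivalently, the differentials of the linking maps cut the tangent space of the fiber down to dimension $r(d-r)$ at every exact point. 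Combined with the previous step (every fiber is the closure of its exact locus, which is smooth of pure dimension $r(d-r)$), this gives (iv) and shows that each fiber is equidimensional of dimension $r(d-r)$. For (v): if $x$ lies on two components of its fiber then the fiber is not locally irreducible at $x$, so by (iv) $x$ is not exact; conversely, near a non-exact $x$ the local model of the fiber is a union of the smooth branches attached to the several admissible rank tuples, each of dimension $r(d-r)$ and meeting the others only along the non-exact locus, so $x$ lies on at least two components. This completes the plan.
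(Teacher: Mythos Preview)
The paper does not supply its own proof of this proposition: it is stated purely as a recollection of Lemmas~A.11 and~A.12 of \cite{osserman2006limit}, and the only ``proof'' in the paper is the citation itself. There is therefore nothing in the present paper to compare your argument against. Your proposal correctly identifies the source and gives a reasonable reconstruction of Osserman's strategy (rank reformulation for (i)--(ii), a deformation argument for (iii), and a local chart analysis for (iv)--(v)); this is consistent with how the result is used downstream, but goes well beyond what the paper itself provides.
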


\begin{Remark}
In our case, linked Grassmannians will always be considered over $S=\mathrm{Spec}(R)$, where $R$ is a DVR. Thus, we will only talk about exact points and non-exact points of the special fiber, since the generic fiber is a Grassmannian as seen in the proof of Lemma \ref{lem:link} in \cite{osserman2006limit}.
\end{Remark}

The following theorem was proved in \cite{helm2008flatness}.

\begin{thm}[\cite{helm2008flatness}]
Let $S$ be integral and Cohen-Macaulay and let $\mathrm{LG}$ be any linked Grassmannian over $S$. Then $\mathrm{LG}$ is flat over $S$, reduced and Cohen-Macaulay, with reduced fibers.
\end{thm}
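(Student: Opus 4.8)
The plan is to reduce to a universal base, prove Cohen--Macaulayness by a complete--intersection count inside the ambient product of Grassmannians, and make the whole argument hinge on a single dimension estimate for the degenerate fiber that is forced by the linking conditions (I)--(III) of Definition~\ref{def:link}. All four assertions --- flatness, reducedness, Cohen--Macaulayness, reducedness of fibers --- are local on $S$ and stable under base change, and $\mathrm{LG}$ is a functor whose formation commutes with base change, so one may assume the $\mathcal{E}_i$ are free and replace $S$ by the universal base $Z$ parametrising a scalar $s$ together with $d\times d$ matrices $(f_i,g_i)_{i=1}^{n-1}$ subject to condition (I), restricted to the open locus where (II) and (III) hold. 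Every linked Grassmannian over an integral Cohen--Macaulay $S$ is the pullback of $\mathrm{LG}_Z$ along the classifying map $S\to Z$, so it suffices to treat $\mathrm{LG}:=\mathrm{LG}_Z$; along the way one checks that $Z$ is integral and Cohen--Macaulay, its locus $\{s\neq 0\}$ --- where each $f_i$ is an isomorphism and $g_i=sf_i^{-1}$ --- being a smooth dense open, and the defining equations being of determinantal type.

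For Cohen--Macaulayness: by Lemma~\ref{lem:link}, $\mathrm{LG}$ is a closed subscheme of $G=G_1\times\cdots\times G_n$, which is smooth over $Z$, hence Cohen--Macaulay, with each $G_i$ of relative dimension $r(d-r)$. Carrying the tautological rank-$r$ subbundles $V_i\subset\mathcal{E}_i$, the condition $f_i(V_i)\subset V_{i+1}$ is the vanishing of a section of the rank-$r(d-r)$ bundle $\mathrm{Hom}(V_i,\mathcal{E}_{i+1}/V_{i+1})$, and similarly for the $g_i$; on the dense open $\{s\neq 0\}$ the $g_i$-conditions are redundant by (I), so $\mathrm{LG}$ generically --- and, granted the dimension estimate below, everywhere --- has codimension exactly $(n-1)r(d-r)$ in $G$. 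It is then a local complete intersection in the Cohen--Macaulay scheme $G$, hence Cohen--Macaulay.

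Flatness and reduced fibers then follow formally from the dimension estimate. Once $\mathrm{LG}$ is Cohen--Macaulay with every fiber of the expected dimension $r(d-r)$ over the base, the local criterion for flatness applies (this is where the universal reduction to a sufficiently regular $Z$ is used), so $\mathrm{LG}\to Z$ is flat. The generic fiber is the smooth Grassmannian $\Gr(r,d)$, hence reduced; a fiber over a point of the zero locus of $s$ is Cohen--Macaulay (being a fiber of a flat Cohen--Macaulay morphism) and, by the dimension analysis, generically reduced --- its generic points are exact points, which by Proposition~\ref{prop:facts}(iii)--(iv) are dense in the fiber and smooth over $Z$ --- so it carries no embedded components and is reduced. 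Finally $\mathrm{LG}$ itself is reduced, being flat over the reduced $Z$ with reduced fibers.

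Everything therefore rests on the dimension estimate $\dim\mathrm{LG}_z\le r(d-r)$ for every $z\in Z$, with equality, and this is the step I expect to be the main obstacle. When $s(z)\neq 0$ the fiber is $\Gr(r,d)$ and nothing is needed. When $s(z)=0$, stratify $\mathrm{LG}_z$ by the ranks $r_i=\mathrm{rk}(f_i|_{V_i})$: here (II) gives $\mathrm{Ker}f_i=\mathrm{Im}\,g_i$ and $\mathrm{Ker}g_i=\mathrm{Im}\,f_i$ on the fibers, and (III) gives $\mathrm{Im}\,f_i\cap\mathrm{Ker}f_{i+1}=0$ together with the $g$-analogue. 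A parameter count for tuples $(V_1,\dots,V_n)$ of prescribed type, using these identities to bound the freedom remaining after imposing $f_i(V_i)\subset V_{i+1}$, bounds each stratum by $r(d-r)$, with equality precisely on the open stratum of maximal ranks --- which is exactly the exact locus of the fiber by Proposition~\ref{prop:facts}(ii). The content is that the linking conditions are precisely strong enough that no stratum of the degenerate fiber, equivalently no component of $\mathrm{LG}$ lying over the zero locus of $s$, can exceed the expected dimension; organising the bookkeeping by induction on the length $n$ --- comparing a length-$n$ linked Grassmannian with the length-$(n-1)$ one obtained by deleting $\mathcal{E}_n$, $f_{n-1}$, $g_{n-1}$ --- keeps the count manageable.
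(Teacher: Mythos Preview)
The paper does not supply a proof of this theorem: it is quoted verbatim from \cite{helm2008flatness} and used as a black box (see the sentence ``The following theorem was proved in \cite{helm2008flatness}'' immediately preceding it), so there is no in-paper argument to compare your proposal against.

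For what it is worth, your outline is essentially the strategy of Helm--Osserman: pass to a universal base, realise $\mathrm{LG}$ inside the product of Grassmannians, reduce everything to a fiberwise dimension bound $\dim\mathrm{LG}_z\le r(d-r)$, and then read off Cohen--Macaulayness, flatness, and reducedness from standard commutative algebra together with the smoothness and density of the exact locus (Proposition~\ref{prop:facts}). One point to be careful about in your write-up is the local complete intersection claim: you impose $2(n-1)$ sections of rank $r(d-r)$ bundles (the $f_i$- and $g_i$-conditions), yet assert codimension only $(n-1)r(d-r)$. Over $\{s\neq 0\}$ the $g_i$-conditions are redundant, as you say, but over $\{s=0\}$ the redundancy is more subtle, and showing that $\mathrm{LG}$ is locally cut out by the expected number of equations --- rather than merely having the expected dimension --- needs an argument. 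Helm--Osserman handle this with some additional care; if you flesh out your sketch, that is the place where the bookkeeping actually lives, not only in the stratified parameter count you flag at the end.
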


In the Appendix of \cite{osserman2014limit}, the notion of linked Grassmannians of length $n$ was generalized to arbitrary underlying graphs. We make this more precise in the following definition and give a quick summary of the results we will need in Section \ref{sec:mulink}.

\begin{Definition}
\label{def:prelink}
Let $G$ be a finite directed graph, connected by (directed) paths, $d$ an integer and $S$ a scheme. Suppose we are given the following data: $\mathcal{E}_{\bullet}$, consisting of vector bundles $\mathcal{E}_v$ of rank $d$ over $S$ for each $v\in V(G)$ and morphisms $f_{e}:\mathcal{E}_v\to\mathcal{E}_{v'}$ for each edge $e\in E(G)$, where $e$ points from $v$ to $v'$. For a directed path $P$ in $G$, we denote by $f_P$ the composition of the morphisms $f_e$ along th edges $e$ in the path.\\
Given an integer $r<d$ suppose further that we also have the following condition satisfied: For any two paths $P,P'$ in $G$ with the same head and tail, there are sections $s,s'\in\Gamma(S,\mathcal{O}_S)$ such that \[s\cdot f_P=s'\cdot f_{P'}.\] Moreover, if $P$ (resp. $P'$) is minimal (i.e. a shortest path with respect to the canonical graph metric), then $s'$ (resp. $s$) is invertible.\\
Then define the \textbf{prelinked Grassmannian} $\mathrm{LG}(r,\mathcal{E}_{\bullet})$ to be the scheme representing the functor associating to an $S-$scheme $T$ the set of all collections $(\mathcal{F}_v)_{v\in V(G)}$ of rank-$r$ subbundles of the $\mathcal{E}_{v,T}$ satisfying the property that for all edges $e\in E(G)$, we have $f_e(\mathcal{F}_v)\subset\mathcal{F}_{v'}$, where $e$ points from $v$ to $v'$. It was proved in \cite{osserman2014limit} that this functor is representable. Moreover, $\mathrm{LG}(r,\mathcal{E}_{\bullet})$ is a projective scheme over $S$ and compatible with base change.
\end{Definition}

\begin{Remark}
\begin{enumerate}
\item Note that we allow the path consisting of one vertex and consider $f_P=\mathrm{id}$ in this case. The condition implies that for any cycle $P$, we obtain $f_P=s\cdot \mathrm{id}$ for some scalar $s$, which corresponds to Definition \ref{def:link} (I).
\item When $G$ is the graph consisting of vertices $v_1,\dots,v_n$ and directed edges $e_{i,i+1}$ pointing from $v_i$ to $v_{i+1}$ and $e_{i+1,i}$ pointing from $v_{i+1}$ to $v_i$ we are in the situation of Definition \ref{def:fun}.
\end{enumerate}
\end{Remark}

The notion of exact points generalises easily to prelinked Grassmannians, but they are harder to study. To overcome this obstacle, the notion of simple points was introduced in \cite{osserman2014limit}. When the graph $G$ together with the data $\mathcal{E}_{\bullet},f_{\bullet}$ is an $s-$linked chain, the notion of simple points coincides with the notion of exact points (see \cite{osserman2014limit}).

\begin{Definition}
Let $k$ be a field over $S$ and $(F_v)_v$ a $k-valued$ point of a prelinked Grassmannian $\mathrm{LG}(r,\mathcal{E}_{\bullet})$. We say that $(F_v)_v$ is simple if there exist $v_1,\dots,v_r\in V(G)$ and $s_i\in F_{v_i}$ for $i=1,\dots,r$, such that for every $v\in V(G)$, there exist paths $P^{v_i}$ with each $P^{v_i}$ going from $v_i$ to $v$ and such that $f_{P^{v_1}}(s_1),\dots,f_{P^{v_r}}(s_r)$ form a basis for $F_v$.
\end{Definition}

We end this section by giving the two main statements for simple points:

\begin{Proposition}[\cite{osserman2014limit}]
The following statements hold for simple points of a prelinked Grassmannian:
\begin{enumerate}
\item The simple points form an open subset of $LG(r,\mathcal{E}_{\bullet})$.
\item On the locus of simple points in $LG(r,\mathcal{E}_{\bullet})$ is smooth over $S$ of relative dimension $r(d-r)$.
\end{enumerate}
\end{Proposition}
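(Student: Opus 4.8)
The plan is to prove both parts at once by stratifying the simple locus of $\mathrm{LG}(r,\mathcal{E}_\bullet)$ according to the combinatorial witness data in the definition of simplicity. Recall that $\mathrm{LG}(r,\mathcal{E}_\bullet)$ is the closed subscheme of the product of Grassmannians $\prod_{v\in V(G)}G_v$, with $G_v$ parametrising rank-$r$ subbundles of $\mathcal{E}_v$, cut out by the conditions $f_e(\mathcal{F}_v)\subset\mathcal{F}_{v'}$ for all edges $e\colon v\to v'$. Fixing vertices $v_1,\dots,v_r\in V(G)$ and, for each $v\in V(G)$ and each $i$, a directed path $P^i_v\colon v_i\to v$, let $U=U(\{v_i\},\{P^i_v\})$ be the subset of $\mathrm{LG}(r,\mathcal{E}_\bullet)$ consisting of those $(\mathcal{F}_v)_v$ admitting $s_i\in\mathcal{F}_{v_i}$ with $(f_{P^i_v}(s_i))_{i=1}^r$ a basis of $\mathcal{F}_v$ for every $v$. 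Using the prelinked relations $s\cdot f_P=s'\cdot f_{P'}$ --- in which $s'$ is invertible whenever $P'$ is minimal --- one sees that it suffices to let the $P^i_v$ range over minimal paths, of which there are finitely many; thus the simple locus is a finite union of such sets $U$.

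For part (1) I would show each $U$ is open. Near a point of $U$, lift the witnessing $s_i$ to sections of the tautological subbundle $\mathcal{F}_{v_i}$, push them forward to the sections $f_{P^i_v}(s_i)$ of $\mathcal{F}_v$, and note that the condition that these $r$ sections form a basis of $\mathcal{F}_v$ is, in local trivialisations, the non-vanishing of a single $r\times r$ determinant. As there are only finitely many vertices, this is an open condition, so $U$ is open; hence the simple locus $\bigcup U$ is open.

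For part (2), fix such a $U$ and a point $x\in U$ over a field. Writing a first-order deformation of $\mathcal{F}_v$ as $\phi_v\in\mathrm{Hom}(\mathcal{F}_{v,x},\,\mathcal{E}_{v,x}/\mathcal{F}_{v,x})$, the defining conditions linearise to $\overline{f_e}\circ\phi_v=\phi_{v'}\circ(f_e|_{\mathcal{F}_v})$ for every edge $e\colon v\to v'$, and hence $\overline{f_P}\circ\phi_{v_i}=\phi_v\circ(f_P|_{\mathcal{F}_{v_i}})$ for every path $P\colon v_i\to v$. Evaluating at $s_i$ gives $\phi_v(f_{P^i_v}(s_i))=\overline{f_{P^i_v}}(\phi_{v_i}(s_i))$, and since the $f_{P^i_v}(s_i)$ span $\mathcal{F}_{v,x}$ this shows that the whole family $(\phi_v)_v$ is determined by the $r$ vectors $\phi_{v_i}(s_i)\in\mathcal{E}_{v_i,x}/\mathcal{F}_{v_i,x}$; conversely, the relations $s\cdot f_P=s'\cdot f_{P'}$ let one check that any choice of such vectors extends to a compatible family. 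So evaluation identifies the relative tangent space of $\mathrm{LG}(r,\mathcal{E}_\bullet)$ at $x$ with $\bigoplus_{i=1}^r\mathcal{E}_{v_i,x}/\mathcal{F}_{v_i,x}$, which has dimension $r(d-r)$.

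To conclude smoothness of relative dimension $r(d-r)$ I would combine this tangent bound with a matching lower bound on the local dimension of $\mathrm{LG}(r,\mathcal{E}_\bullet)$ over $S$ at $x$: either by promoting the computation above to an explicit chart --- parametrising the points of $U$ by their propagation vectors and thereby exhibiting $U$ as smooth over $S$ of relative dimension $r(d-r)$ --- or by pushing all the data forward along the chosen minimal paths to reduce to a linked Grassmannian of a chain, where smoothness of exact points and flatness are already available from \cite{osserman2006limit} and \cite{helm2008flatness} (recall that for a chain the simple points are exactly the exact points). I expect the construction of the chart to be the main obstacle: the bookkeeping needed to turn the propagation vectors into genuine coordinates on $U$ over an arbitrary base $S$ relies essentially on the prelinked hypothesis that a minimal path $P'$ contributes an invertible scalar in $s\cdot f_P=s'\cdot f_{P'}$, which is what makes the chart independent of the auxiliary choices and smooth rather than merely small-dimensional in its tangent directions.
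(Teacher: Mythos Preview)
The paper does not supply its own proof of this proposition: it is quoted verbatim from \cite{osserman2014limit} as a preliminary result in Section~\ref{sec:link}, with no argument given. So there is nothing in the present paper to compare your attempt against; any comparison would have to be made with Osserman's original appendix, not with this paper.

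That said, your sketch is broadly in line with how such statements are proved in \cite{osserman2014limit}: stratify the simple locus by the finite combinatorial data $(\{v_i\},\{P^i_v\})$, use the minimal-path clause of the prelinked hypothesis to reduce to finitely many strata, and then on each stratum exhibit the determinant-nonvanishing as an open condition and compute the tangent space by propagating the deformations $\phi_{v_i}(s_i)$. The one place where your outline is genuinely incomplete is the last paragraph: you correctly identify that the tangent-space count alone does not give smoothness, and you offer two alternatives (build an explicit chart, or reduce to a chain). The second alternative is not available in the generality of prelinked Grassmannians --- the underlying graph need not be a path, and there is no canonical way to push the data to a chain while preserving the moduli problem --- so you would in fact have to carry out the first, constructing the chart that sends a point of $U$ to its $r$ propagation vectors and checking this is an isomorphism onto an open of $\mathbb{A}^{r(d-r)}_S$. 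That is exactly the content of the argument in \cite{osserman2014limit}, and it is where the work lies.
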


\subsection{Local models of Shimura varieties}
\label{subsec:shim}
In this subsection, we recall some of the basic theory around certain local models of Shimura varieties studied in \cite{gortz2001flatness}. Shimura varieties can be viewed as higher dimensional analogous of modular curves. For a detailed introduction see for example \cite{PRSLocal}. For the rest of this subsection, we assume that $k$ is a perfect field.
In \cite{gortz2001flatness} a local model for unitary Shimura varieties was studied. The standard local model is defined to be the $R-$scheme representing the following functor:\vspace{\baselineskip}

Fix a number $0<r<d$  and let $e_1,\dots,e_d$ be the canonical basis of $K^d$. We fix lattices $L_0,\dots,L_{d-1}$ as follows: $L_0=Re_1+\cdots+Re_d$ and $L_i=R\pi e_1+\cdots+R\pi e_i+R e_{i+1}+\cdots+R e_d$. We obtain the following chain complex by the natural inclusion maps
\[\begin{tikzcd}
\cdots \arrow[r] & L_0 \arrow[r] & L_1 \arrow[r] & \cdots \arrow[r] & L_{d-2} \arrow[r] & L_{d-1} \arrow[r] & \pi^{-1}L_0 \arrow[r] & \cdots
\end{tikzcd}\]
For each $R-$scheme $S$, the $S-$valued points of $M^{loc}$ are the isomorphism classes of the following commutative diagram:
\[\begin{tikzcd}
L_{0,S} \arrow[r] & L_{1,S} \arrow[r]& \cdots \arrow[r] & L_{d-1,S} \arrow[r,"\pi"] & L_{0,S}\\
\mathcal{F}_0 \arrow[r] \arrow[u, hook] & \mathcal{F}_1 \arrow[r] \arrow[u, hook] & \cdots \arrow[r] & \mathcal{F}_{n-1} \arrow[r] \arrow[u, hook] & \mathcal{F}_{n} \arrow[u, hook]
\end{tikzcd}\]
where $L_{i,S}=L_i\otimes S$ and the $\mathcal{F}$ are sub-vector bundles of rank $r$ of $L_{i,S}$.

\begin{thm}[\cite{gortz2001flatness}]
\label{thm:gortz}
The local model $M^{loc}$ is flat over $R$ and its special fiber is reduced.
\end{thm}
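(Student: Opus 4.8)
The plan is to deduce this from the (pre)linked-Grassmannian machinery recalled above, by recognising $M^{loc}$ as a (pre)linked Grassmannian attached to the standard periodic lattice chain. First I would observe that, since the $L_i$ all have diagonal form in the common basis $e_1,\dots,e_d$ and satisfy $\pi L_i\subset L_j\subset L_i$ for all $i\neq j$, the set $\Gamma=\{[L_0],\dots,[L_{d-1}]\}$ is a single chamber of $\mathfrak{B}_d$, hence a convex point configuration in the sense of Definition~\ref{def:conv}. The transition maps $L_i\hookrightarrow L_{i+1}$ of the chain complex, together with the backward maps $L_{i+1}\to L_i$ coming from multiplication by $\pi$ on the appropriate coordinates, are then diagonal with entries $1$ and $\pi$; in particular a forward map composed with the corresponding backward map equals $\pi\cdot\mathrm{id}$, and the wrap-around $L_{d-1}\to\pi^{-1}L_0\cong L_0$ is exactly encoded by the scalar $s=\pi$. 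Thus the commutative diagram whose isomorphism classes form $M^{loc}(S)$ is precisely the functor of points of the (pre)linked Grassmannian $\mathrm{LG}(r,\mathcal{E}_{\bullet})$ over $S=\mathrm{Spec}(R)$ for the cyclic graph on $d$ vertices (equivalently, after cutting the cycle, the linked Grassmannian of length $d$ with $s=\pi$). I would then verify the defining conditions (I)--(III) of Definition~\ref{def:link}, all of which follow immediately from the explicit diagonal shape of the transition maps.

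With the identification $M^{loc}\cong\mathrm{LG}$ in hand, the theorem becomes a formal consequence of results quoted above. Since $R$ is a discrete valuation ring, $S=\mathrm{Spec}(R)$ is integral and Cohen--Macaulay, so Helm's theorem~\cite{helm2008flatness} applies and shows that $\mathrm{LG}$ --- hence $M^{loc}$ --- is flat over $R$, reduced, Cohen--Macaulay, and has reduced fibers; this yields both assertions at once. For the cyclic (periodic) graph one may instead invoke directly the flatness and reduced-fibers statements for prelinked Grassmannians of \cite{osserman2014limit}, once one checks that the hypothesis on the sections $s,s'$ in Definition~\ref{def:prelink} again reduces to the diagonal form of the transition maps, so that no "unwrapping" of the cycle is needed.

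In the special case $r=1$ there is an even more direct route, which additionally recovers the finer geometry: by Faltings' moduli interpretation (Theorem~\ref{thm:falt}) one has $M^{loc}\cong\mathrm{LG}(1,\Gamma)\cong\mathcal{M}(\Gamma)$, and since $\Gamma$ is the convex configuration given by a chamber of $\mathfrak{B}_d$, Proposition~\ref{prop:must}(1)--(2) shows that $\mathcal{M}(\Gamma)$ is regular and flat over $R$ with reduced special fiber consisting of $d$ smooth components meeting transversely; combined with Theorem~\ref{thm:equ} this even identifies those components as varieties of the form $X(k^d;W_1,\dots,W_d)$.

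The main obstacle I anticipate lies in the first step: one must check carefully that the isomorphism of functors between $M^{loc}$ and the (pre)linked Grassmannian is honest --- compatible with base change, and identifying the subbundles $\mathcal{F}_i$ of a local-model point with the subbundle chain of a linked-Grassmannian point --- and that passing from the cyclic graph of \cite{osserman2014limit} to a finite chain of \cite{osserman2006limit} loses no information, so that Helm's theorem genuinely applies. The verification of conditions (I)--(III) (or of the section hypothesis in the prelinked case) is elementary once coordinates are fixed, but it is precisely where the specific shape of the standard lattice chain and the choice $s=\pi$ enter.
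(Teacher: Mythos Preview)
Your third paragraph --- the $r=1$ route via $M^{loc}\cong\mathrm{LG}(1,\Gamma)\cong\mathcal{M}(\Gamma)$ followed by Proposition~\ref{prop:must} --- is exactly what the paper does. Note that the paper only claims to recover Theorem~\ref{thm:gortz} in the case $r=1$ (see the sentence immediately after Theorem~\ref{thm:shim}); the general statement is simply quoted from G\"ortz, and the general-$r$ linked-Grassmannian identification is explicitly deferred to work in preparation (\cite{hwang}, see the Remark following Theorem~\ref{thm:shim}).

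Your first two paragraphs aim at general $r$, and here there is a genuine gap relative to what the paper supplies. Helm's theorem, as stated, applies only to linked Grassmannians in the sense of Definition~\ref{def:link}, i.e.\ to a \emph{path} graph, whereas $M^{loc}$ carries a cyclic structure and the prelinked Grassmannian $\mathrm{LG}(r,\Gamma)$ attached to the chamber $\Gamma$ lives on the \emph{complete} graph on $d$ vertices (every pair of lattices in a chamber is adjacent). Your assertion that ``after cutting the cycle'' one obtains an equivalent chain linked Grassmannian is precisely the non-trivial step: one must show that the single wrap-around condition of $M^{loc}$ forces all the backward-map conditions $g_i(\mathcal{F}_{i+1})\subset\mathcal{F}_i$ of the chain, and conversely. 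You flag this as ``the main obstacle'' but do not resolve it; the paper does not resolve it either and points to \cite{hwang} for the general-$r$ statement. Separately, your appeal to ``flatness and reduced-fibers statements for prelinked Grassmannians of \cite{osserman2014limit}'' is not supported by anything quoted in the paper: no such general theorem is recorded there, and indeed the paper's own Corollary after Theorem~\ref{thm:falt} deduces flatness and reducedness of $\mathrm{LG}(1,\Gamma)$ only \emph{via} the Mustafin-variety identification, not from a prelinked-Grassmannian result.

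In summary: for $r=1$ your proposal and the paper coincide; for general $r$ you are sketching the strategy the paper attributes to \cite{hwang}, but the functor-of-points comparison needed to invoke Helm's theorem is left as an unverified claim.
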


\section{Special fibers of Mustafin Varieties}
\label{sec:const}
The goal of this section is to prove Theorem \ref{thm:equ} and Theorem \ref{thm:class}. We begin by constructing the varieties $\widetilde{\mathcal{M}}(\Gamma)$ and $\widetilde{\mathcal{M}}^r(\Gamma)$.

\subsection{Constructing the varieties $\widetilde{\mathcal{M}}(\Gamma)$ and $\widetilde{\mathcal{M}}^r(\Gamma)$}
\label{subsec:const}
\begin{Construction}
 \label{con:var}
 Let $\Gamma=\{L_1,\dots,L_n\}$ be a point configuration $\mathfrak{B}_d^0$ as in section \ref{sec:bruh}.
Let $[L]$ be a homothety class in $\mathrm{conv}(\Gamma)$. We choose $L\in[L]$ to be the reference lattice in our choice of coordinates in Subsection \ref{sec:must}. We describe maps between $[L]$ and $[L_i]$: Pick an apartment $A_i$, corresponding to the basis $e_1^i,\dots,e_d^i$, such that $[L]$ and $[L_i]$ are contained in $A$, i.e.: \[L=\pi^{m_1}Re_1^i+\cdots+\pi^{m_d}Re_d^i\] and  \[L_i=\pi^{n^i_1}Re_1^i+\cdots+\pi^{n^i_d}Re_d^i.\] (This is always possible, see e.g. the discussion prior to Proposition 6.111 in \cite{buldingsbook}.) There is a canonical map $g_i$, such that $g_i[L]=[L_i]$, which up to homothety (i.e. multiplication by a scalar) is given by the matrix \[h_i=\begin{pmatrix}
\pi^{n^i_1-m_1}& &\\
& \ddots &\\
& & \pi^{n^i_d-m_d}
\end{pmatrix}\]
in the basis $e_1^i,\dots,e_d^i$. Thus, in our coordinates the Mustafin variety $\mathcal{M}(\Gamma)$ is given by the closure of the image of
\begin{align*}
f_{\Gamma,[L]}:\mathbb{P}(V)\xrightarrow{(g_1^{-1}\times\dots\times g_n^{-1})\circ\Delta}\mathbb{P}(L)^n.
\end{align*}
We choose a set of generators $e_1^{[L]},\dots,e_d^{[L]}$ for $L$ and we denote the transformation matrix representing $g_i$ with respect to $e_1^{[L]},\dots,e_d^{[L]}$ by $G_i'$. We define an invertible matrix $G_i$ by $G_i^{-1}=\pi^{s_i} G_i'$, where $s_i=-\min_{m,n=1,\dots,d}\{\mathrm{val}\left((G_i^{-1})_{m,n}\right)\}$.
This induces a rational map over the special fiber:
\begin{align*}
\widetilde{f_{\Gamma,[L]}}:\mathbb{P}_k^{d-1}\dashrightarrow(\mathbb{P}_k^{d-1})^n,
\end{align*}
given by $(\widetilde{g_1^{-1}}\times\dots\times\widetilde{g_1^{-1}})\circ\Delta$, where $\widetilde{g_i^{-1}}$ is obtained from $G_i^{-1}$ by the entry-wise residue map.
Denoting the varieties $X_{\Gamma,[L]}=\overline{\mathrm{Im}(\widetilde{f_{\Gamma,[L]}})}$, we define our desired varieties as the union of the closures of the images of these rational maps: \[\widetilde{\mathcal{M}}(\Gamma)=\bigcup_{[L]\in\mathrm{conv}(\Gamma)}X_{\Gamma,[L]}\textrm{ and }\widetilde{\mathcal{M}}^r(\Gamma)=\bigcup_{[L]\in V(\mathrm{conv}(\Gamma))}X_{\Gamma,[L]},\]
where $V(\mathrm{conv}(\Gamma)$ is the set of polyhedreal vertices of $\Gamma$.

\end{Construction}

\begin{Remark}
The expression $\widetilde{\mathcal{M}}(\Gamma)=\bigcup_{[L]\in\mathrm{conv}(\Gamma)}X_{\Gamma,[L]}\textrm{ and }\widetilde{\mathcal{M}}^r(\Gamma)=\bigcup_{[L]\in V(\mathrm{conv}(\Gamma))}X_{\Gamma,[L]}$ include a slight abuse of notation as $X_{\Gamma,[L]}$ is contained in the special fiber of $\mathbb{P}(L)^n$ and $X_{\Gamma,[L']}$ is contained in the special fiber of $\mathbb{P}(L')^n$. In order to take the union , we observe that the isomorphism $\mathbb{P}(L')^n\to\mathbb{P}(L)^n$ induces an isomorphism of special fibers and thus maps $X_{\Gamma,[L']}$ into the special fiber of $\mathbb{P}(L)^n$, where we can take the union.\vspace{\baselineskip}

When $\Gamma$ is a point configuration in one apartment induced by $e_1,\dots, e_d$, we can choose a set of generators $e_1^{[L]},\dots,e_d^{[L]}$, such that the matrices $g_i$ coincide with the matrices $h_i$ up to homothety. The maps $\widetilde{g_i^{-1}}$ are then given by \[\begin{pmatrix}
a_1 & &\\
& \ddots &\\
& & a_d 
\end{pmatrix},\]
where \[a_j=1, \textrm{ if }n_j^i-m_j=\min_{k=1,\dots,d}(n_k^i-m_k)\] and $a_j=0$ else. (see Example \ref{ex:must}).\vspace{\baselineskip}

The key observation for the proof of Theorem \ref{thm:equ} is that the maps $\widetilde{f_{\Gamma,[L]}}$ factorise as follows: \begin{equation}\label{equ:rat}\begin{tikzcd}
\mathbb{P}_k^{d-1}\arrow[rdd, dashrightarrow, "\widetilde{f_{\Gamma,[L]}}"] \arrow[r,dashrightarrow] & \mathbb{P}\left(\faktor{k^d}{\mathrm{Ker}(\widetilde{g_1^{-1}})}\right)\times\cdots\times\mathbb{P}\left(\faktor{k^d}{\mathrm{Ker}(\widetilde{g_n^{-1}})}\right) \arrow[dd,hookrightarrow]\\
\\
& \left(\mathbb{P}_k^{d-1}\right)^n
\end{tikzcd}\end{equation}
Thus, each variety $\overline{\mathrm{Im}(\widetilde{f_L})}$ is a variety of the form $X(k^d,W_1,\dots,W_n)$, where $W_i=\mathrm{ker}(\widetilde{g_i^{-1}})$. Note that $\bigcap_{i=1}^nW_i$ might not be trivial. However, the components of $\mathcal{M}(\Gamma)_k$ are equidimensional. The vertices with $\bigcap_{i=1}^nW_i\neq\langle0\rangle$ contribute varieties of lower dimension and thus are contained in an irreducible component by Lemma \ref{lem:con}. This irreducible component is contributed by a vertex satisfying $\bigcap_{i=1}^nW_i=\langle0\rangle$.
\end{Remark}
Before we prove Theorem \ref{thm:equ}, we illustrate our construction in the following example:
\begin{Example}
\label{ex:must}
By Lemma \ref{lem:brutrop}, there is a natural correspondence between lattice points in $\faktor{\mathbb{Z}^d}{\mathbb{Z}\textbf{1}}$ and lattices in $V$ over $R$. In Example 2.2 in \cite{cartwright2011mustafin}, the special fiber of the Mustafin variety corresponding to the vertices in Example \ref{ex:tc} was computed to be the union of the following irreducible components
 \[\mathbb{P}^2\times pt \times pt,\ pt \times\mathbb{P}^2\times pt,\ pt \times pt\times \mathbb{P}^2,\ \mathbb{P}^1\times\mathbb{P}^1\times pt,\ \mathbb{P}^1\times pt\times \mathbb{P}^1 ,\ pt\times\mathbb{P}^1\times\mathbb{P}^1\]                                                                                                                                                                                                                                                                                                                                                         
 Our construction yields the same variety as seen in the following computations.
 For $v=v_3$, we obtain the map \[\begin{pmatrix}1 & 0 & 0\\
                                           0 & \pi^2 & 0\\
                                           0 & 0 & \pi^4 \end{pmatrix}\times\begin{pmatrix}
                                           1 & 0 & 0\\
                                           0 & \pi & 0\\
                                           0 & 0 & \pi^2\end{pmatrix}\times \mathrm{id}\]                             
Modding out $\pi$, we immediately see that \[\mathrm{Im}(\widetilde{f_{\Gamma,[L]}})=pt\times pt \times \mathbb{P}^2,\]where $[L]$ is the lattice class corresponding to $v$ Analogously, we obtain $pt \times\mathbb{P}^2\times pt$ and $\mathbb{P}^2 \times pt\times pt$ for $v=v_2$ and $v=v_1$ respectively.\\
For $v=(0,-1,-4)$, we obtain the map \[\begin{pmatrix}1 & 0 & 0\\0 & 1 & 0\\0 & 0 & \pi^2 \end{pmatrix}\times\begin{pmatrix}\pi & 0 & 0\\0 & 1 & 0\\0 & 0 & \pi\end{pmatrix}\times\begin{pmatrix}\pi^2 & 0 & 0\\0 & 1 & 0\\0 & 0 & 1\end{pmatrix}.
\]
Modding out $t$, we immediately see that \[\mathrm{Im}(f_{\Gamma,[L]})=\mathbb{P}^1\times pt\times\mathbb{P}^1,\]where once again $[L]$ is the lattice class corresponding to $v$ Analogously, we obtain $\mathbb{P}^1\times \mathbb{P}^1\times pt$ and $pt\times\mathbb{P}^1\times\mathbb{P}^1$ for $v=(0,-1,-3)$ and $v=(0,-2,-4)$ respectively.
\end{Example}

The following Lemma implies the easier containment for the equalities of Theorem \ref{thm:equ}.

\begin{Lemma}
\label{lem:con}
 For any lattice $[L]$ in $\mathrm{conv}(\Gamma)$, the following relation holds:
 \[\overline{\mathrm{Im}(\widetilde{f_{\Gamma,[L]}})}\subset\mathcal{M}(\Gamma)_k.\]
 Moreover, $\overline{\mathrm{Im}(\widetilde{f_{\Gamma,[L]}})}$ is an irreducible component of $\mathcal{M}(\Gamma)_k$ if and only if\[\mathrm{dim}(\overline{\mathrm{Im}(\widetilde{f_{\Gamma,[L]}})})=\mathrm{dim}(\mathcal{M}(\Gamma)_k).\]
\end{Lemma}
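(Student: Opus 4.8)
The plan is to establish the containment $\overline{\mathrm{Im}(\widetilde{f_{\Gamma,[L]}})}\subset\mathcal{M}(\Gamma)_k$ by exhibiting $\overline{\mathrm{Im}(\widetilde{f_{\Gamma,[L]}})}$ as a limit, inside $\mathbb{P}(L)^n$, of the generic fiber of the Mustafin variety. First I would recall from Construction \ref{con:var} that $\mathcal{M}(\Gamma)$ is the closure in $\mathbb{P}(L)^n$ of the image of the $R$-point map $f_{\Gamma,[L]}=(g_1^{-1}\times\cdots\times g_n^{-1})\circ\Delta$, where each $g_i$ is represented over $R$ by the invertible matrix $G_i$ with $G_i^{-1}=\pi^{s_i}G_i'$ normalized so that $G_i^{-1}$ has entries in $R$ with at least one unit entry. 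Since $\mathcal{M}(\Gamma)$ is $R$-flat (Proposition \ref{prop:must}(1)), its special fiber is the scheme-theoretic limit $t\to 0$ of the generic fiber; concretely, $\mathcal{M}(\Gamma)_k$ contains the flat limit over $\mathrm{Spec}(R)$ of the family of subvarieties $\mathrm{Im}(f_{\Gamma,[L]})\subset\mathbb{P}(L)^n$. The reduction of $G_i^{-1}$ modulo $\pi$ is exactly $\widetilde{g_i^{-1}}$, so the fiberwise-reduced limit map is $\widetilde{f_{\Gamma,[L]}}=(\widetilde{g_1^{-1}}\times\cdots\times\widetilde{g_n^{-1}})\circ\Delta$, whose image closure is $X_{\Gamma,[L]}=\overline{\mathrm{Im}(\widetilde{f_{\Gamma,[L]}})}$. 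Hence every point of $X_{\Gamma,[L]}$ lies in the special fiber of the closure, i.e. in $\mathcal{M}(\Gamma)_k$.

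More carefully, I would argue as follows. Consider the map $\Phi_{[L]}:\mathbb{P}^{d-1}_R\dashrightarrow\mathbb{P}(L)^n$ given in coordinates by $x\mapsto (G_1^{-1}x,\dots,G_n^{-1}x)$; this is defined on the open locus of $\mathbb{P}^{d-1}_R$ where no coordinate $G_i^{-1}x$ vanishes identically, an open subscheme dominating $\mathrm{Spec}(R)$ since each $G_i^{-1}$ is nonzero mod $\pi$. The closure of its image is $\mathcal{M}(\Gamma)$ (up to the identification of $\mathbb{P}(L)^n$ with $(\mathbb{P}^{d-1}_R)^n$), and the generic fiber agrees with $\mathrm{Im}(f_{\Gamma,[L]})$. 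Restricting $\Phi_{[L]}$ to the special fiber $\mathbb{P}^{d-1}_k$ recovers $\widetilde{f_{\Gamma,[L]}}$, because reduction mod $\pi$ commutes with the matrix-vector products. Then $X_{\Gamma,[L]}=\overline{\mathrm{Im}(\widetilde{f_{\Gamma,[L]}})}\subset\overline{\mathrm{Im}(\Phi_{[L]})}_k=\mathcal{M}(\Gamma)_k$, where the middle inclusion uses that the closure of a restriction is contained in the restriction of the closure. One subtlety I would address explicitly: I need $[L]\in\mathrm{conv}(\Gamma)$ so that each pair $[L],[L_i]$ lies in a common apartment, which is what makes the diagonal matrices $h_i$ (and hence the $g_i$, $G_i$) available and makes $\mathcal{M}(\Gamma)$ as computed from the reference lattice $L$ genuinely isomorphic to the original Mustafin variety (the commuting square in Subsection \ref{sec:must}, applied with reference lattice $L$). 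This is where convexity of the hull is used.

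For the second assertion, I would invoke flatness of $\mathcal{M}(\Gamma)$ over $R$ together with the fact (Proposition \ref{prop:must}(1)) that $\mathcal{M}(\Gamma)_k$ is reduced, Cohen–Macaulay and connected, hence equidimensional of dimension $d-1$ (its generic fiber is $\mathbb{P}^{d-1}_{\mathbb{K}}$ and flatness preserves fiber dimension). Since $X_{\Gamma,[L]}$ is an irreducible closed subvariety of $\mathcal{M}(\Gamma)_k$ (it is the closure of the image of a rational map from the irreducible $\mathbb{P}^{d-1}_k$), it is contained in some irreducible component $Z$ of $\mathcal{M}(\Gamma)_k$, and $\dim Z=d-1=\dim\mathcal{M}(\Gamma)_k$. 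If $\dim X_{\Gamma,[L]}=\dim\mathcal{M}(\Gamma)_k$ then $X_{\Gamma,[L]}=Z$ by irreducibility of $Z$, so $X_{\Gamma,[L]}$ is a component; conversely, if $X_{\Gamma,[L]}$ is a component then it has dimension $d-1$ by equidimensionality.

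The main obstacle I anticipate is the careful bookkeeping in the first paragraph: verifying that the normalization $G_i^{-1}=\pi^{s_i}G_i'$ makes the reduction mod $\pi$ nonzero and that the rational map $\Phi_{[L]}$ is defined on an open set that is flat (dominant) over $\mathrm{Spec}(R)$, so that taking closures and then special fibers genuinely produces $X_{\Gamma,[L]}$ rather than something smaller; and making precise that changing the reference lattice from some fixed $L_1$ to $L$ is harmless, i.e. that $\mathcal{M}(\Gamma)$ computed with reference lattice $L$ equals the one in Definition \ref{def:musta}. Both are routine once one writes down the commuting diagrams, but they are the crux of why "the easier containment" is in fact easy.
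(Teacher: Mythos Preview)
Your proposal is correct and follows essentially the same idea as the paper's proof: both arguments exploit that the normalization $G_i^{-1}=\pi^{s_i}G_i'$ makes the map $\Phi_{[L]}$ defined over $R$ with nonzero reduction, so that the special fiber of $\overline{\mathrm{Im}(\Phi_{[L]})}=\mathcal{M}(\Gamma)$ contains $\overline{\mathrm{Im}(\widetilde{f_{\Gamma,[L]}})}$. The paper carries this out pointwise --- lifting a $k$-point $v$ to $K$, observing that $f_{\Gamma,[L]}(v)$ satisfies every $g'\in I(\Gamma,[L])$, and then reading off that the constant term $g(\widetilde{f_{\Gamma,[L]}}(v))$ vanishes --- while you phrase the same computation as the scheme-theoretic inclusion ``closure of a restriction $\subset$ restriction of the closure.'' The second assertion is handled identically in both, via equidimensionality of $\mathcal{M}(\Gamma)_k$ from Proposition~\ref{prop:must}(1).

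One small correction: your remark that $[L]\in\mathrm{conv}(\Gamma)$ is needed ``so that each pair $[L],[L_i]$ lies in a common apartment'' is not the reason --- any two lattice classes lie in a common apartment (as the paper notes in Construction~\ref{con:var}). Convexity plays no role in the containment of Lemma~\ref{lem:con}; the construction and the proof go through for arbitrary $[L]$. The restriction to $\mathrm{conv}(\Gamma)$ is only because those are the lattices that will later be shown to account for all components.
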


\begin{proof}
To prove the first statement, denote the ideal in $R[x_{ij}]$ defining $\mathcal{M}(\Gamma)$ in $\mathbb{P}(L)^n$ by $I(\Gamma,[L])$ and the ideal in $k[x_{ij}]$ defining $\mathcal{M}(\Gamma)_k$ in the special fiber of $\mathbb{P}(L)^n$ by $I(\Gamma,[L])_k$. Let $v\in\mathbb{P}_k^{d-1}\hookrightarrow\mathbb{P}_K^{d-1}$, $w=\widetilde{f_{\Gamma,[L]}}(v)$ and let $g\in I(\Gamma,[L])_k$, we want to prove that $g(w)=0$. We choose a polynomial $g'\in I(\Gamma,[L])$, which specialises to $g$ and define $W'=f_{\Gamma,[L]}(v)\in\mathcal{M}(\Gamma)\subset\mathbb{P}(L)^n$. We see immediately that $g'(W')=0$. Moreover, since we chose the component-wise maps $G_i^{-1}$, such that $G_i^{-1}\in\mathrm{Mat}(R,d\times d)$ and such that there are elements in $G_i^{-1}$ of valuation $0$, we see that the constant term of $g'(W')$ is given by $g(W)$. However, since $g'(w')=0$, the constant term vanishes as well and thus $g(w)=0$. We obtain $\widetilde{f_{\Gamma,[L]}}(v)\in\mathcal{M}(\Gamma)_k$ and thus $\widetilde{f_{\Gamma,[L]}}\subset\mathcal{M}(\Gamma)_k$ as desired. By definition $\overline{\mathrm{Im}(\widetilde{f_{\Gamma,L}})}$ is reduced and irreducible and by \cite{cartwright2011mustafin} the same is true for the irreducible components of $\mathcal{M}(\Gamma)_k$. Therefore, the second statement follows.
\end{proof}

We further give a purely tropical description of the map $\widetilde{f_{\Gamma,[L]}}$, whenever $\Gamma$ is a point configuration in one apartment. In fact, we associate such a map and thus a variety to any point configuration in the tropical torus $\faktor{\mathbb{R}^n}{\mathbb{R}\textbf{1}}$.

\begin{Construction}
\label{con:general}
Let $\Gamma=\{[v_1],\dots,[v_n]\}\subset\faktor{\mathbb{R}^n}{\mathbb{R}\textbf{1}}$ be a point configuration in the tropical torus, where $v_i=(v_i^1,\dots,v_i^n)$. We define rational maps
\begin{equation}
f_{\Gamma,v_i}:\mathbb{P}_k^{d-1}\dashrightarrow(\mathbb{P}_k^{d-1})^n
\end{equation}
where the map to the $j$th factor is given by
\begin{equation}
\begin{pmatrix}
a_1 & &\\
& \ddots &\\
& & a_d 
\end{pmatrix},
\end{equation}
where $a_k=1$, if $v_j^k-v_i^k=\mathrm{min}_{l}(v_j^l-v_i^l)$ and $a_k=0$ otherwise. Then we define
\begin{equation}
\widetilde{M}(\Gamma)=\bigcup_{v\in\mathrm{conv}(\Gamma)}X_{\Gamma,v}\,\,\mathrm{and}\,\,\widetilde{M}^r(\Gamma)=\bigcup_{v\in V(\mathrm{conv}(\Gamma))}X_{\Gamma,v}.
\end{equation}
For $\Gamma'=\{L_1,\dots,L_n\}$ a point configuration of lattice classes in one apartment, let $\Gamma=\{[v_1],\dots,[v_n]\}$ be the corresponding point configuration in the tropical torus, then
\begin{equation}
\widetilde{M}(\Gamma)=\widetilde{M}(\Gamma')\,\,\mathrm{and}\,\,\widetilde{M}^r(\Gamma)=\widetilde{M}^r(\Gamma'),
\end{equation}
justifying the same notation.
\end{Construction}

\subsection{A basic case: Two lattices}
\label{sub:twola}
In this section we use results from \cite{MR3031570} to prove Theorem \ref{thm:equ} (2) if $\Gamma$ consists of two lattices. That is, we show $\widetilde{\mathcal{M}}^r(\Gamma)=\mathcal{M}(\Gamma)_k$, demonstrating some of the basic ideas for the proof of Theorem \ref{thm:equ} (2) in subsection \ref{sec:proof1}. By Lemma \ref{lem:con}, the variety $\widetilde{\mathcal{M}}^r(\Gamma)$ is contained in $\mathcal{M}(\Gamma)_k$. Since both schemes are reduced, we can deduce equality if their bivariate Hilbert polynomials coincide.\\
Each pair of two lattices is contained in a common apartment. Moreover, their convex hull forms a path between the two lattices we started with. Using the maps over the special fiber as in Construction \ref{con:var}, we obtain a complex as follows:
\[B_1\rightleftarrows\dots\rightleftarrows B_n,\]
where each $B_i=k^d$ (and each $B_i$ corresponds to a polyhedral vertex). We use the notation of the $B_i$s in order to keep track of the position.
This is the same situation as in \cite{MR3031570}:

\begin{Definition}[\cite{MR3031570}]
Let $f_i:\mathbb{P}(B_i)\dashrightarrow \mathbb{P}(B_1)\times \mathbb{P}(B_n)$ be the induced map obtained by composing the maps above along the shortest path to the extremal vertices. We define $\bigcup_{i=1}^n\overline{\mathrm{Im}(f_i)}$ to be the \textit{associated Esteves-Osserman variety}.
\end{Definition}

\begin{Remark}
A similar discussion concerning the connection between more general Esteves-Osserman varieties and linked Grassmannian can be found in \cite{hwang}. However, since this work is still in preparation, we decided to include the arguments needed for our case.
\end{Remark}

In \cite{MR3031570}, it is proved that $\widetilde{\mathcal{M}}(\Gamma)$ has the same bivariate Hilbert polynomial as $\mathbb{P}^{d-1}$ if the maps in the complex fulfill the following exactness condition:
 \setcounter{equation}{0}
\begin{align}
 \mathrm{ker}(B_i\rightarrow B_{i+1})=\mathrm{Im}(B_{i+1}\rightarrow B_i)\\
 \mathrm{ker}(B_{i+1}\rightarrow B_{i})=\mathrm{Im}(B_{i}\rightarrow B_{i+1})\\
 \mathrm{Im}(B_{i-1}\rightarrow B_{i})\cap\mathrm{ker}(B_i\rightarrow B_{i+1})=\langle0\rangle\\
 \mathrm{Im}(B_{i+1}\rightarrow B_{i})\cap\mathrm{ker}(B_i\rightarrow B_{i-1})=\langle0\rangle
\end{align}

\begin{Lemma}
\label{lem:EO}
 Let $\Gamma=\{L_1,L_2\}$ and apply Construction \ref{con:var} to obtain a complex as follows:
 \[B_1\rightleftarrows\dots\rightleftarrows B_n,\]
 where $B_i=k^d$ for all $i\in\{1,\dots,n\}$. Then this complex fulfils the conditions (1)-(4) above.
\end{Lemma}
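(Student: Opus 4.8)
The plan is to work out explicitly what the maps $B_i \to B_{i+1}$ and $B_{i+1} \to B_i$ look like in suitable coordinates, and then verify the four exactness conditions by a direct computation with diagonal matrices. Since $\Gamma = \{L_1, L_2\}$ lies in a common apartment, say with basis $e_1, \dots, e_d$, the convex hull $\mathrm{conv}(\Gamma)$ is (by Lemma \ref{lem:brutrop} and Proposition \ref{prop:tropline}) the lattice-point path in $\mathrm{tconv}(f(L_1), f(L_2)) \subset \faktor{\mathbb{Z}^d}{\mathbb{Z}\mathbf 1}$, which is a concatenation of at most $d-1$ coordinate-direction segments. After relabelling, I may assume $L_1 = Re_1 + \cdots + Re_d$ and $L_2 = \pi^{a_1}Re_1 + \cdots + \pi^{a_d}Re_d$ with $0 = a_1 \le a_2 \le \cdots \le a_d$. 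First I would enumerate the vertices $B_1, \dots, B_n$ of this path explicitly, using the formulas recalled after Proposition \ref{prop:tropline}: the $j$-th vertex of the path "switches on" a further block of coordinates, so each consecutive pair of lattices $B_i, B_{i+1}$ differs by scaling exactly one coordinate block by $\pi$.

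Next I would compute the maps over the special fiber via Construction \ref{con:var}. For two consecutive lattices on the path, the transition matrix $h$ (up to homothety) is diagonal with entries in $\{1, \pi\}$; after normalizing by $\pi^{s}$ as in the Construction and applying the residue map, the reduced map $B_i \to B_{i+1}$ is the diagonal $0/1$ matrix that is $1$ exactly on the coordinates which were \emph{not} scaled. Concretely, if at step $i$ the coordinate block $C_i \subset \{1, \dots, d\}$ gets multiplied by $\pi$ going from $B_i$ to $B_{i+1}$, then $B_i \to B_{i+1}$ is the coordinate projection killing $C_i$, and $B_{i+1} \to B_i$ is the coordinate projection killing its complement-type block (the one scaled in the other direction). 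In particular $\mathrm{ker}(B_i \to B_{i+1}) = \langle e_j : j \in C_i\rangle$ and $\mathrm{Im}(B_{i+1} \to B_i)$ is the same coordinate subspace, so conditions (1) and (2) are immediate, and compositions $f_{i}\circ g_{i}$ and $g_i \circ f_i$ equal multiplication by the scalar $0$ on the special fiber — so we are in the $s = 0$ situation, exactly an $s$-linked chain. For (3) and (4): the image of $B_{i-1} \to B_i$ is a coordinate subspace supported on a set disjoint from $C_i$ (since along the monotone path the "switched-on" blocks accumulate and never overlap), hence it meets $\mathrm{ker}(B_i \to B_{i+1}) = \langle e_j : j \in C_i\rangle$ only in $\langle 0\rangle$; symmetrically for (4).

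The key structural fact making all of this work is the monotonicity $a_1 \le \cdots \le a_d$ together with the explicit description of the tropical segment in Proposition \ref{prop:tropline}: it guarantees that the coordinate blocks $C_1, C_2, \dots$ appearing along the path are pairwise disjoint and nested-compatible, so that all the relevant kernels and images are coordinate subspaces supported on disjoint index sets. I expect the main obstacle to be purely bookkeeping: matching the indexing of the path vertices $B_1, \dots, B_n$ with the index blocks $C_i$ and tracking which coordinates are scaled in which direction at each step, so that the two "directions" of each arrow are correctly identified and conditions (3), (4) are checked at the boundary vertices $i = 1$ and $i = n$ (where $B_{i-1}$ resp. $B_{i+1}$ is absent and the condition is vacuous). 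Once the coordinates are set up carefully, each of (1)--(4) reduces to the observation that two coordinate subspaces of $k^d$ supported on disjoint subsets of $\{1,\dots,d\}$ intersect trivially.
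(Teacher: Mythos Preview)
Your approach is correct and essentially identical to the paper's: both reduce the maps between consecutive $B_i$ to complementary $0/1$ diagonal matrices (giving conditions (1)--(2) immediately) and then invoke the nesting of the index sets coming from the structure of the tropical line segment (Proposition~\ref{prop:tropline}) to obtain (3)--(4). One small terminological slip: the kernel-supports $C_i$ are not ``pairwise disjoint'' but \emph{nested} (equivalently, in the paper's notation $J_i \subset J_{i+1}$), which is precisely what makes $\mathrm{Im}(B_{i-1}\to B_i)$, supported on $C_{i-1}^c$, meet $\ker(B_i\to B_{i+1}) = \langle e_j : j \in C_i\rangle$ trivially.
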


\begin{proof}
 The two vertices lie in a common apartment. We see immediately that over $R$  the maps  \[A_i\rightarrow A_{i+1}\] are given by diagonal matrices \[A=\mathrm{diag}(a_1,\dots,a_d),\] where $a_j=1$ for $j\in J_i$ and $a_j=\pi$ for $j\in J_i^c$, where $J_i$ is some subset of $[d]$. Moreover, over $R$ the maps \[A_{i+1}\rightarrow A_i\] are given by diagonal matrices \[B=\mathrm{diag}(b_1,\dots,b_d),\] where $b_j=\pi$ for $j\in J_i$ and $b_j=1$ for $j\in J_i^c$. Thus, the first two conditions follow immediately after modding out $\pi$.\\
 The third and fourth condition follow from the fact that $J_i\subset J_{i+1}$ which is a consequence of the structure of the tropical convex hull (see Lemma \ref{prop:tropline}).
\end{proof}

Thus, we obtain Theorem \ref{thm:equ} restricted to the case where $n=2$.

\subsection{Proof of Theorem \ref{thm:equ} (1)}
\label{sub:proof2}
This subsection is devoted to proving Theorem \ref{thm:equ} (1). The strategy of the proof is to show that \[\mathcal{M}(\Gamma)_k=\widetilde{\mathcal{M}}(\Gamma)\] for convex point configurations $\Gamma$ and recover the general case by using Lemma 2.4 of \cite{cartwright2011mustafin}.\vspace{\baselineskip}

Let $\Gamma$ be a convex point configuration. We need to following Corollary, which was proved in \cite{cartwright2011mustafin}, which is essentially a consequence of Proposition \ref{prop:must} (5).

\begin{Corollary}
\label{cor:bir}
Let $\Gamma=\{L_1,\dots,L_n\}$ be an arbitrary point configuration. For every $i\in[n]$, the special fiber $\mathcal{M}(\Gamma)_k$ has a unique irreducible component which maps birationally onto $\mathbb{P}(L_i)_k$ under the projection $\mathbb{P}(L_1)_k\times\dots\times\mathbb{P}(L_n)_k\to\mathbb{P}(L_i)_k$.
\end{Corollary}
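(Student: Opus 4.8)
The plan is to reduce everything to a degree count for the projection $p_i\colon\mathcal{M}(\Gamma)\to\mathbb{P}(L_i)$ obtained by restricting the $i$-th factor projection of $\mathbb{P}(L_1)\times_R\cdots\times_R\mathbb{P}(L_n)$. First I would record that $p_i$ is projective and birational: on generic fibres it is the canonical isomorphism $\mathbb{P}(V)\xrightarrow{\ \sim\ }\mathbb{P}(L_i)_K$ coming from the open immersion $\mathbb{P}(V)\hookrightarrow\mathbb{P}(L_i)$, and $\mathcal{M}(\Gamma)$ is by definition the closure of the image of $\mathbb{P}(V)$. Since $\mathcal{M}(\Gamma)$ and $\mathbb{P}(L_i)$ are integral (Proposition~\ref{prop:must}(1)), this gives $p_{i*}[\mathcal{M}(\Gamma)]=[\mathbb{P}(L_i)]$ on cycle groups.

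Next I would specialize to the closed fibre. Both $\mathcal{M}(\Gamma)$ and $\mathbb{P}(L_i)$ are flat over $R$, so there is a specialization homomorphism carrying the fundamental class of the generic fibre to the cycle of the special fibre and commuting with proper push-forward (Fulton, \emph{Intersection Theory}, Sections~2.6 and~20.3). As $\mathcal{M}(\Gamma)_k$ is reduced (Proposition~\ref{prop:must}(1)) its cycle is $\sum_C[C]$ over the irreducible components, and $[\mathbb{P}(L_i)_k]$ is the reduced class of an irreducible scheme; applying the specialization map to $p_{i*}[\mathcal{M}(\Gamma)]=[\mathbb{P}(L_i)]$ would then give
\[\sum_C p_{i*}[C]=[\mathbb{P}(L_i)_k]\qquad\text{in }A_{d-1}(\mathbb{P}(L_i)_k)\cong\mathbb{Z}.\]
Every component $C$ has dimension $d-1=\dim\mathbb{P}(L_i)_k$ (as $\mathcal{M}(\Gamma)$ is flat over $R$ with generic fibre $\mathbb{P}^{d-1}_K$ and is Cohen-Macaulay, Proposition~\ref{prop:must}(1)), so $p_{i*}[C]=0$ unless $p_i$ maps $C$ onto $\mathbb{P}(L_i)_k$, in which case $p_{i*}[C]=[k(C):k(\mathbb{P}(L_i)_k)]\cdot[\mathbb{P}(L_i)_k]$ with degree $\ge 1$. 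Comparing coefficients forces $\sum_{C\text{ dominating}}[k(C):k(\mathbb{P}(L_i)_k)]=1$: a sum of positive integers equal to $1$, hence a single summand. Thus there is exactly one component $C_i$ of $\mathcal{M}(\Gamma)_k$ dominating $\mathbb{P}(L_i)_k$, its degree over $\mathbb{P}(L_i)_k$ is $1$, so $p_i$ restricts to a birational morphism $C_i\to\mathbb{P}(L_i)_k$; and any component mapping birationally onto $\mathbb{P}(L_i)_k$ is in particular dominating, hence is $C_i$. This proves the Corollary (existence is also Proposition~\ref{prop:must}(4)).

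The step I expect to require the most care is the invocation of the specialization map --- in particular its compatibility with proper push-forward and the equality $\mathrm{sp}[X_K]=[X_k]$ for $X$ flat over $R$; nothing here is special to Mustafin varieties. An alternative route avoiding intersection theory, which I would fall back on, is to localize at the generic point $\eta_i$ of $\mathbb{P}(L_i)_k$. Since $\mathbb{P}(L_i)=\mathbb{P}^{d-1}_R$ is regular and $\eta_i$ has codimension one, $A:=\mathcal{O}_{\mathbb{P}(L_i),\eta_i}$ is a discrete valuation ring; the base change $\mathcal{M}(\Gamma)\times_{\mathbb{P}(L_i)}\mathrm{Spec}\,A$ is an open subscheme of the normal integral scheme $\mathcal{M}(\Gamma)$, proper over $\mathrm{Spec}\,A$, with generic fibre a single point and with closed fibre $p_i^{-1}(\eta_i)$ finite (because $\mathcal{M}(\Gamma)_k$ is pure of dimension $d-1$, every point over $\eta_i$ is a generic point of a component). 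Being quasi-finite and proper it is finite by Zariski's main theorem, and finite plus birational onto the normal $\mathrm{Spec}\,A$ makes it an isomorphism; so $p_i^{-1}(\eta_i)$ is one reduced point, which again yields a unique dominating component on which $p_i$ is birational.
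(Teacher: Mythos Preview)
Your proposal is correct. Both routes you outline are valid, and either suffices.

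The paper itself does not prove this corollary: it simply records that the statement is established in \cite{cartwright2011mustafin} and remarks that it is ``essentially a consequence of Proposition~\ref{prop:must}~(5)'' (the existence part is already Proposition~\ref{prop:must}~(4)). So your write-up is considerably more detailed than anything appearing here. Your first argument---pushing forward the fundamental cycle via the specialization map and reading off the constraint $\sum_{C\ \text{dominant}}[k(C):k(\mathbb{P}(L_i)_k)]=1$---is in spirit the same as the multidegree argument used in \cite{cartwright2011mustafin}: the value of the multidegree function of $\mathcal{M}(\Gamma)_k$ at $(0,\dots,d-1,\dots,0)$ (the $d-1$ in the $i$th slot) equals~$1$, and that value is precisely the sum you compute. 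Your second argument via localisation at~$\eta_i$ and Zariski's Main Theorem is a genuinely different and slightly more elementary packaging that avoids Fulton's specialisation machinery entirely; it is a nice alternative.

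One small inaccuracy in the second argument: the base change $\mathcal{M}(\Gamma)\times_{\mathbb{P}(L_i)}\mathrm{Spec}\,A$ is not literally an \emph{open} subscheme of $\mathcal{M}(\Gamma)$ but rather a localisation (the inverse limit over opens $p_i^{-1}(U)$ with $\eta_i\in U$). This does not affect the argument---localisations of a normal integral scheme are still normal and integral, which is all you use---but the wording should be adjusted.
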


Moreover, by \cite{faltings2001toroidal}, the special fiber consists of $n$ irreducible components for convex point configurations $\Gamma$. Thus, in order to prove \[\mathcal{M}(\Gamma)_k=\widetilde{\mathcal{M}}(\Gamma)\]we need to find the unique component $C_i$ for each $i\in[n]$ with the properties as in Corollary \ref{cor:bir} in $\widetilde{\mathcal{M}}(\Gamma)$. Since there are only $n$ components those are all components and the equality is established.\vspace{\baselineskip}

Let $[L]\in\Gamma$ and consider the map $\widetilde{f_{\Gamma,[L]}}$ in Construction \ref{con:var} \[\widetilde{f_{\Gamma,[L]}}:\mathbb{P}_k^{d-1}\to\prod_{L'\in\Gamma}\mathbb{P}_k^{d-1}.\]
By construction, the map to the factor corresponding to $[L]$ in $\prod_{L'\in\Gamma}\mathbb{P}_k^{d-1}$ is the identity map. Thus, by Lemma \ref{lem:con} the closure of the image will be a component of $\mathcal{M}(\Gamma)_k$. Moreover, since the map to the factor corresponding to $[L]$ is the identity map, the respective component maps birationally to the factor $\mathbb{P}(L)_k$ under the natural projection. This completes the proof of Theorem \ref{thm:equ} for convex point configurations.\vspace{\baselineskip}

Now, let $\Gamma$ be an arbitrary point configuration. Then by Lemma 2.4 of \cite{cartwright2011mustafin}, we can compare the special fibers of $\mathcal{M}(\Gamma)$ and $\mathcal{M}(\mathrm{conv}(\Gamma))$: Let $C$ be an irreducible component of $\mathcal{M}(\Gamma)_k$, then there exists a unique irreducible component $\tilde{C}$ of $\mathcal{M}(\mathrm{conv}(\Gamma))_k$ which projects onto $C$ under \[p_{\Gamma}:\prod_{L\in\mathrm{conv}(\Gamma)}\mathbb{P}(L)_k\to\prod_{L\in\Gamma}\mathbb{P}(L)_k.\]
The key idea in proving Theorem \ref{thm:equ} (1) for arbitrary point configurations is to observe that Construction \ref{con:var} commutes with projections, i.e. for $\Gamma'=\mathrm{conv}(\Gamma)$ the following diagramm commutes:

\[\begin{tikzcd}
\mathbb{P}_k^{d-1} \arrow[rr,"\widetilde{f_{\Gamma',[L]}}"] \arrow[drr, "\widetilde{f_{\Gamma,[L]}}"]& & \prod_{L\in\Gamma'}\mathbb{P}_k^{d-1} \arrow[d,"p_{\Gamma}"]\\
 & & \prod_{L\in\Gamma}\mathbb{P}_k^{d-1}
\end{tikzcd}\]
We have proved that \[\mathcal{M}(\Gamma')_k=\bigcup_{L\in\Gamma'}\overline{\mathrm{Im}(\widetilde{f_{\Gamma',L})}}.\] To see that \[\mathcal{M}(\Gamma)_k=\bigcup_{L\in\Gamma'}\overline{\mathrm{Im}(\widetilde{f_{\Gamma,L})}}.\] we fix an irreducible component $C\in\mathcal{M}(\Gamma)_k$ and denote the component which projects onto it by $\widetilde{C}$. By the previous discussion, there exists a unique lattice $L_C\in\Gamma'$, such that $\widetilde{C}=\overline{\mathrm{Im}(\widetilde{f_{\Gamma',[L_C]}})}$. By the commutative diagram, we see that $C=p_{\Gamma}(\widetilde{C})=\overline{\mathrm{Im}(f_{\Gamma,[L_C]})}$. Thus every component $C$ is contained in $\widetilde{\mathcal{M}}(\Gamma)$ and we obtain \[\mathcal{M}(\Gamma)_k=\bigcup_{C\in\mathcal{C}_{\Gamma}} C\subset\widetilde{\mathcal{M}}(\Gamma)\subset\mathcal{M}(\Gamma)_k,\]where $\mathcal{C}_{\Gamma}$ is the set of irreducible components of $\mathcal{M}(\Gamma)_k$. We deduce \[\mathcal{M}(\Gamma)_k=\widetilde{\mathcal{M}}(\Gamma)\] for arbitrary point configurations $\Gamma$, which completes the proof.\qed

\subsection{Proof of Theorem \ref{thm:equ} (2)}
\label{sec:proof1}
In this subsection, we prove the equality of the variety $\widetilde{\mathcal{M}}^r(\Gamma)$ defined in Construction \ref{con:var} and the special fiber of the Mustafin variety $\mathcal{M}(\Gamma)$ defined in Definition \ref{def:musta}. We have already seen that $\widetilde{\mathcal{M}}^r(\Gamma)\subset\mathcal{M}(\Gamma)_k$ in Lemma \ref{lem:con}. To see the other direction, we compare the multidegrees. Since $\mathcal{M}(\Gamma)_k$ is a flat degeneration of the diagonal of $(\mathbb{P}^{d-1})^n$, we know (see e.g. \cite{cartwright2010hilbert}) that
\begin{equation}
\label{equ:mult}\mathrm{multDeg}(\mathcal{M}(\Gamma)_k)=\left\{(m_1,\dots,m_n)\in\mathbb{Z}_{\ge0}^n:\;\sum_{i=1}^nm_i=d-1\right\}.
\end{equation}
Moreover, the multidegree function takes value $1$ at each multidegree. It is easy to see that $\mathrm{multDeg}(\mathcal{M}(\Gamma)_k)={{n+d-2}\choose{d-1}}$\\
If we prove that the same holds true for the multidegree function of $\widetilde{\mathcal{M}}^r(\Gamma)$, we can deduce equality. Thus, we have to prove two statements:
\begin{itemize}
\item The special fiber of the Mustafin variety and the variety we constructed have the same multidegrees: $\mathrm{multDeg}(\widetilde{\mathcal{M}}^r(\Gamma))=\mathrm{multDeg}(\mathcal{M}(\Gamma)_k)$, where we know the right hand side of the equation.
\item The multidegree function takes value one at each $(m_1,\dots,m_n)\in\mathrm{multDeg}(\widetilde{\mathcal{M}}^r(\Gamma))$.
\end{itemize}
The basic idea is as follows: For every tuple $(m_1,\dots,m_n)$ as above, we find a vertex in $\mathrm{tconv}(\Gamma)$ whose associated variety contributes multidegree $(m_1,\dots,m_n)$. We use basic tropical intersection theory. By our method, it is immediate that there is only one vertex that can contribute a variety of this multidegree.\vspace{\baselineskip}

We begin by treating the case of lattices being in tropical general position. The case of arbitrary point configurations will follow using stable intersection.

\subsubsection{Point configurations in tropical general position}
For a point configuration $\Gamma$ in tropical general position, the number of polyhedral vertices in the respective tropical convex hull is ${{n+d-2}\choose{d-1}}$ as proved in \cite{develin2004tropical}. This coincides with the cardinality of $\mathrm{multDeg}(\mathcal{M}(\Gamma)_k)$. Therefore, the natural candidates to for the correct multidegrees are those vertices.

We begin by describing the lattices corresponding to the lattice points of the polyhedral complex associated to $\mathrm{tconv}(\Gamma)$ and start by defining a map
\begin{align}
\label{equ:C}
\begin{split}
C:V\left(\mathrm{tconv}(\Gamma))\cap\faktor{\mathbb{Z}^{d-1}}{\mathbb{Z}\textbf{1}}\right)&\longrightarrow\left\{(m_1,\dots,m_n)\in\mathbb{Z}_{\ge0}^n|\sum_{i=1}^nm_i=d-1\right\}\\v&\longmapsto (C(v)_1,\dots,C(v)_n),
\end{split}
\end{align}
where $C(v)_i$ is the largest $k$, such that $v$ lies in the codimension $k$-skeleton of the hyperplane rooted at at the lattice $L_i\in\Gamma$. It is not obvious why this map is well defined. This is where tropical intersection theory comes into play.\vspace{\baselineskip}

As mentioned before the number of vertices of $\mathrm{tconv}(\Gamma)$ coincides with the number of tuples $(m_1,\dots,m_n)\in\mathbb{Z}_{\ge0}$. If we construct one pre-image for each such tuple under the map $C$ in Equation (\ref{equ:C}), we are done. Let $(m_1,\dots,m_n)$ be such a tuple. We identify a vertex in $\mathrm{tconv}(\Gamma)$ as follows: Let $\Gamma=\{L_1,\dots,L_n\}$ and $v_1,\dots,v_n$ the corresponding vertices in the tropical torus. We intersect the standard max-hyperplanes $H_i$ at $v_i$ \[H_1^{m_1}\cap_{st}\dots\cap_{st}H_n^{m_n}=H_1^{m_1}\cap\dots\cap H_n^{m_n}=\{v\}\]as in Proposition \ref{prop:tropint}. This intersection point $v$ is vertex of $\mathrm{tconv}(\Gamma)$ and thus $C$ is well-defined. In fact we have proved that the map $C$ is bijective.

%

Now, we study the maps $\widetilde{f_{\Gamma,[L]}}=(\widetilde{g_1^{-1}},\dots,\widetilde{g_n^{-1}})$ as in Construction \ref{con:var}, in particular the associated numerical data \[d_I=\mathrm{dim}\bigcap_{i\in I}\mathrm{ker}\widetilde{g_i^{-1}}.\]
\begin{Lemma}
\label{lem:ker}
Let $v\in\mathrm{tconv}(v_1,\dots,v_n)$ such that $v$ is contained in the codimension $m_i$ skeleton at $v_i$, but not in the codimension $m_i+1$ skeleton, in the tropical torus. Let $L$ be a lattice in the class corresponding to $v$, then \[\mathrm{dim}\mathrm{Ker}(\widetilde{g_i^{-1}})=d-1-m_i,\]
where $\widetilde{g_1^{-1}}$ is the map to the $i-$th factor in Construction \ref{con:var}.
\end{Lemma}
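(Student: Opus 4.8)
The plan is to reduce the claim to a direct computation using the description of the maps $g_i$ in Construction~\ref{con:var} together with the structure of the tropical hyperplane skeleta from Proposition~\ref{prop:tropint}. First I would fix a common apartment containing both $[L]$ and $[L_i]$, so that after choosing suitable bases $e_1^i,\dots,e_d^i$ we have $L=\pi^{m_1}Re_1^i+\cdots+\pi^{m_d}Re_d^i$ and $L_i=\pi^{n^i_1}Re_1^i+\cdots+\pi^{n^i_d}Re_d^i$, and $g_i$ is represented up to homothety by the diagonal matrix $h_i=\mathrm{diag}(\pi^{n^i_1-m_1},\dots,\pi^{n^i_d-m_d})$. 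The normalisation in Construction~\ref{con:var} subtracts $\min_j(n^i_j-m_j)$ from all exponents, so $\widetilde{g_i^{-1}}$ is the diagonal matrix with a $1$ in position $j$ exactly when $n^i_j-m_j$ is minimal and a $0$ otherwise. Hence $\mathrm{Ker}(\widetilde{g_i^{-1}})$ is spanned by the $e_j^i$ with $n^i_j-m_j$ strictly above the minimum, and its dimension is $d$ minus the number of indices achieving the minimum.

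The core of the argument is then to identify the number of minimising indices with $m_i+1$, i.e. to show that $v$ lies in the codimension-$m_i$ skeleton of the standard tropical hyperplane $H_i$ at $v_i$ precisely when the minimum of $n^i_1-m_1,\dots,n^i_d-m_d$ (equivalently, of $(v_i)_\ell-(v)_\ell$ after translating to the tropical torus, using the bijection $f$ of~\eqref{map}) is attained exactly $m_i+1$ times. This is essentially the definition of the skeleton: recall from the lemma preceding Proposition~\ref{prop:tropint} that $H_i^{m_i}$ is the $(d-1-m_i)$-skeleton of $H_i$, and that a cell of $H_i$ of codimension $c$ consists of points $w$ at which the maximum (in the max-plus convention, equivalently the minimum after a sign flip) of $w_\ell-(v_i)_\ell$ is attained on at least $c+1$ coordinates. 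So ``$v$ is in the codimension-$m_i$ skeleton but not the codimension-$(m_i+1)$ skeleton'' translates exactly to ``the relevant min is attained on exactly $m_i+1$ coordinates,'' which combined with the previous paragraph gives $\dim\mathrm{Ker}(\widetilde{g_i^{-1}})=d-(m_i+1)=d-1-m_i$.

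I expect the main obstacle to be bookkeeping around conventions rather than any deep point: the paper switches between min-plus and max-plus, between lattice exponents and tropical-torus coordinates (with the sign flip in~\eqref{map}), and between ``codimension-$k$ skeleton'' and ``$k$-fold stable self-intersection $H^k$''; a careless translation would shift the count by one. I would therefore be explicit that the apartment containing $[L]$ and $[L_i]$ realises the segment of $\mathrm{conv}(\Gamma)$ between them (so the construction is well-posed, cf.\ the discussion before Proposition~6.111 in~\cite{buldingsbook}), and then carefully match the definition of $H_{v_i}$ (``the maximum of $w_1-v_{i1},\dots,w_n-v_{in}$ is attained at least twice'') against the diagonal exponent pattern. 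Once the conventions are pinned down, the equality $\dim\mathrm{Ker}(\widetilde{g_i^{-1}})=d-1-m_i$ is immediate from counting minimising coordinates, and no further input is needed.
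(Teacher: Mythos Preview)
Your proposal is correct and follows essentially the same argument as the paper: compute $\widetilde{g_i^{-1}}$ as the diagonal matrix with a $1$ in the positions where $n^i_j-m_j$ attains its minimum and $0$ elsewhere, then use that ``$v$ lies in the codimension-$m_i$ but not codimension-$(m_i+1)$ skeleton of $H_{v_i}$'' means precisely that this minimum is attained exactly $m_i+1$ times, giving $\dim\mathrm{Ker}(\widetilde{g_i^{-1}})=d-(m_i+1)$. The paper's proof is the same computation stated more tersely; your extra care about the min/max and sign conventions is warranted but not a different method.
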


\begin{proof}
We look at $v_i-v=(v_{i0}-v_1,\dots,v_{id}-v_d)$. Since $v$ is contained in the codimension $m_i$ skeleton at $v_i$, but not in the codimension $m_i+1$ skeleton, the minimum of $v_{i0}-v_1,\dots,v_{id}-v_d$ is attained $m_i+1$ times. As described in Construction \ref{con:var}, the map $\widetilde{g_1^{-1}}$ over the special fiber is given by
\begin{align*}D=\begin{pmatrix}a_1 & & \\
& \ddots &\\
& & a_d\end{pmatrix},
\end{align*}
where $a_j=1,$ if the minimum is attained at $v_{ij}-v_1$ and $a_j=0$ else. Thus, the dimension of the kernel of the map is given by $d-(m_i+1)$.
\end{proof}

\begin{Lemma}
\label{lem:int}
Let $v$ be a vertex of $\mathrm{tconv}(v_1,\dots,v_n)$, where the $v_i$ are in tropical general position. Let $[L]$ be the lattice class corresponding to $v$. For any $I\subset\{1,\dots,n\}$, we have \[d_I=\bigcap_{i\in I}\mathrm{ker}(\widetilde{g_1^{-1}})_i=d-1-\sum_{i\in I}m_i,\]
 where $\widetilde{g_1^{-1}}$ is the map to the $i-$th factor in Construction \ref{con:var}.
\end{Lemma}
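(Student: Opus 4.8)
The plan is to turn the linear‑algebraic quantity $d_I$ into a count on the tropical polytope $\mathrm{tconv}(\Gamma)$ and then read it off from the combinatorics of the vertex $v$.

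First I would make the reductions explicit. Since $\Gamma$ lies in a single apartment, by the Remark after Construction~\ref{con:var} we may pick generators $e_1^{[L]},\dots,e_d^{[L]}$ of $L$ so that every $\widetilde{g_i^{-1}}$ is a diagonal $0/1$‑matrix. Let $S_i\subseteq[d]$ be the set of coordinates at which the extremum defining the tropical hyperplane $H_i$ rooted at $v_i$ is attained at $v$; then $\widetilde{g_i^{-1}}$ has a $1$ in position $j$ exactly for $j\in S_i$, so $\mathrm{Ker}(\widetilde{g_i^{-1}})=\langle e_j^{[L]}:j\notin S_i\rangle$ and therefore $\bigcap_{i\in I}\mathrm{Ker}(\widetilde{g_i^{-1}})$ is again a coordinate subspace with
\[d_I \;=\; d-\Bigl|\,\bigcup_{i\in I}S_i\,\Bigr|.\]
Because $|S_i|=m_i+1$ by Lemma~\ref{lem:ker}, what remains is the combinatorial identity $\bigl|\bigcup_{i\in I}S_i\bigr|=1+\sum_{i\in I}m_i$.

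Next I would encode this by the bipartite incidence graph $G_I$ on vertex set $I\sqcup\bigl(\bigcup_{i\in I}S_i\bigr)$, with an edge $\{i,j\}$ whenever $j\in S_i$; this is the restriction to $I$ of the Develin--Sturmfels type graph of $v$ (see \cite{develin2004tropical}), and it carries $\sum_{i\in I}|S_i|=|I|+\sum_{i\in I}m_i$ edges. The identity is equivalent to $G_I$ being a tree, i.e. having exactly one more vertex than edge. Acyclicity is supplied by tropical general position: a cycle in $G_I$ would give a vanishing alternating sum of the entries $v_{ij}$, hence a tropically singular maximal minor of the coordinate matrix of $\Gamma$, which is excluded. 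Connectedness is where the extremality of $v$ is used: $v$ arises, in the discussion before Lemma~\ref{lem:ker}, as the stable intersection $H_1^{m_1}\cap_{st}\cdots\cap_{st}H_n^{m_n}$, which by Proposition~\ref{prop:tropint} is a single reduced point, and a point of the type decomposition is $0$‑dimensional precisely when its type graph is connected. For $I=[n]$ this is immediate and both sides of the identity equal $0$; granting it in general, the edge--vertex count for trees gives the identity, and substituting into the display above finishes the proof.

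The connectedness of $G_I$ for a \emph{proper} subset $I\subsetneq[n]$ is the step I expect to be the real obstacle: deleting the indices outside $I$ from the (tree) type graph of $v$ can a priori break it apart, and a dimension count for the partial intersection $\bigcap_{i\in I}H_i^{m_i}$ --- codimension $\sum_{i\in I}m_i$ in general position, with local fan at $v$ cut out by the linear conditions ``$u$ constant on $S_i$ for each $i\in I$'' --- only produces $\bigl|\bigcup_{i\in I}S_i\bigr|=c_I+\sum_{i\in I}m_i$ with $c_I$ the number of connected components of $G_I$. So the substance of the lemma is exactly the claim $c_I=1$, i.e. that $G_I$ is connected. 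I would try to deduce this from the extremality of $v$ --- showing that the coordinates cannot be separated into groups by the hyperplanes indexed by $I$ alone --- and, if no clean uniform argument emerges, by checking $c_I=1$ for the particular index sets $I$ that enter the application of the lemma in Subsection~\ref{sec:proof1}. Once this is settled, the rest is bookkeeping with Lemma~\ref{lem:ker} and the tree identity.
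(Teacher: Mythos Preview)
Your reduction is the same as the paper's: write each $\widetilde{g_i^{-1}}$ as a diagonal $0/1$-matrix, let $S_i$ (the paper's $J_i$) be its support, and reduce to $\bigl|\bigcup_{i\in I}S_i\bigr|=1+\sum_{i\in I}m_i$. The paper then argues in one stroke: since the $v_i$ are in tropical general position, the codimension-$m_i$ skeleta at the $v_i$ for $i\in I$ intersect in codimension $\sum_{i\in I}m_i$, ``thus'' $\bigl|\bigcup J_i\bigr|=1+\sum m_i$. Your bipartite-graph analysis unpacks that ``thus'' and shows that the local codimension at $v$ is really $\bigl|\bigcup S_i\bigr|-c_I$, so the intersection argument only yields $\bigl|\bigcup S_i\bigr|=c_I+\sum m_i$; the missing ingredient is precisely $c_I=1$.

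Your suspicion that this is the real obstacle is correct, and in fact the equality fails. Take $d=n=3$, $v_1=(0,0,0)$, $v_2=(0,-2,-4)$, $v_3=(0,-3,-6)$ (tropical general position) and the vertex $v=v_2$, so $(m_1,m_2,m_3)=(0,2,0)$. Then $S_1$ and $S_3$ are disjoint singletons, $G_{\{1,3\}}$ has two components, and one computes $d_{\{1,3\}}=1\neq 2=d-1-(m_1+m_3)$. So do not try to prove $c_I=1$; it is not true in general, and the paper's ``thus'' is a genuine gap.

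What rescues the application in Subsection~\ref{sec:proof1} is that only the \emph{inequality} $d_I\le d-1-\sum_{i\in I}m_i$ is needed, and your acyclicity argument already delivers it: a forest has at most $(\text{vertices})-1$ edges, hence $|I|+\sum m_i\le |I|+\bigl|\bigcup S_i\bigr|-1$. This puts $(m_1,\dots,m_n)$ into $M(d-1)$. Uniqueness in $M(d-1)$ then follows from the singleton cases $I=\{i\}$ alone, where Lemma~\ref{lem:ker} gives the equality $d_{\{i\}}=d-1-m_i$ and hence $m_i'\le m_i$ for every competitor with $\sum m_i'=d-1$. So the right fix is to replace the lemma by its inequality version; that statement is both true and sufficient, and your acyclicity argument is a complete proof of it.
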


\begin{proof}
Let $v$ be the intersection point of the max-hyperplanes at $v_1,\dots,v_n$ as before. Moreover, let $C_i$ be the codimension $m_i$ cone of the hyperplane at $v_i$ in which $v$ is contained.\\
Let $H_I$ be the intersection of the $C_i$ for $i\in I$. We can describe each $C_i$ as follows:
\begin{align*}
C_i=&\{w\in \faktor{\mathbb{R}^n}{\mathbb{R}\textbf{1}}:\mathrm{\ The\ minimum\ of\ }v_{i1}-w_1,\dots,v_{id}-w_d\\&\mathrm{\ is\ attained\ exactly\ at\ }w_j-v_{ij}\mathrm{\ for\ }j\in J_i\},
\end{align*} for some $J_i\subset\{1,\dots,d\}$ such that $|J_i|=m_i+1$. 
Since $v_1,\dots,v_n$ are in general position, $(v_i)_{i\in I}$ are in general position as well. Moreover, with the same arguments as before, the codimension $m_i$ skeleta at $v_i$ for $i\in I$ intersect in codimension $\sum_{i\in I}m_i$. Thus, we have \[\left|\bigcup_{i\in I}J_i\right|=\sum_{i\in I}m_i+1.\]
Now the kernel of the map $\widetilde{f_{\Gamma,[L]}}_i$ is generated by the basis vectors $e_j$, such that $j\notin J_i$. Thus, the intersection of the kernels is generated by those basis vectors $e_j$ such that $j\notin J_i$ for all $i$, that is \[d_I=\left|\left(\bigcup_{i\in I}J_i\right)^c\right|=d-(\sum_{i\in I}m_i+1).\] Thus the Lemma follows.
\end{proof}

\begin{Example}
Let $v_1$, $v_2$ and $v_3$ as in Example \ref{ex:tc}. We want to find the vertex which contributes the multidegree $(1,0,1)$. In order to do this, we intersect the codimension $1$ skeleton at $v_1$, the codimension $0$ skeleton at $v_2$ and the codimmension $1$ skeleton at $v_3$ as illustrated in Figure \ref{fig:proofex}. We obtain the vertex $(0,-1,-4)$, which by Example \ref{ex:must} contributes the variety $\mathbb{P}^1\times pt\times\mathbb{P}^1$, which has multidegree $(1,0,1)$.
\end{Example}

\begin{figure}
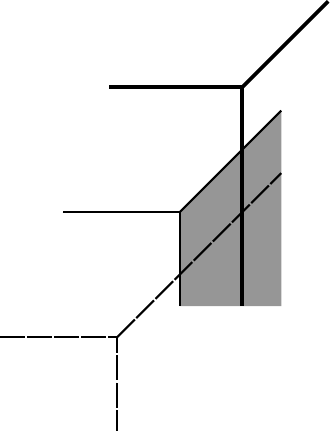
\caption{}
\label{fig:proofex}
\end{figure}

We are now ready to prove Theorem \ref{thm:equ} for the case that $\Gamma$ is in tropical general position.

\begin{proof}[Proof of Theorem \ref{thm:equ} (2) for $\Gamma$ in tropical general position]
Fix a multidegree $(m_1,\dots,m_n)\in\mathrm{multDeg}(\mathcal{M}(\Gamma)_k)$ (see Equation (\ref{equ:mult})). We pick the vertex $v$ with $C(v)=(m_1,\dots,m_n)$ (where the map $C$ is defined in Equation \ref{equ:C}) and let $[L]$ be the lattice class corresponding to $v$. We want to prove that $\mathrm{multDeg}(\overline{\widetilde{\mathrm{Im}(f_{\Gamma,[L]})}})=\{(m_1,\dots,m_n)\}$. By Theorem \ref{thm:rat}, we need to show that the tuple $(m_1,\dots,m_n)$ is the only one with $\sum m_i=d-1$ that fulfils the inequalities \[d-\sum_{i\in I} m_i>d_I\] for all $I\subset\{1,\dots,n\}$ and that there is no tuple $(n_1,\dots,n_n)$ with $\sum n_i>d-1$ that fulfils these inequalities. The first claim follows immediately from Lemma \ref{lem:ker} and \ref{lem:int}. For the second claim note that if such a tuple existed, the dimension of $\overline{\mathrm{Im}\widetilde{f_{\Gamma,[L]}}}$ would be bigger than $d-1$ which is contradiction to Lemma \ref{lem:con}. This proves that \[\mathrm{multDeg}(\widetilde{\mathcal{M}}^r(\Gamma))=\mathrm{multDeg}(\mathcal{M}(\Gamma)_k).\]
To see that the multidegree function takes value $1$ at each element in $\mathrm{multDeg}(\widetilde{\mathcal{M}}^r(\Gamma))$ we use Theorem \ref{thm:rat}: For every variety $X=X(V,V_1,\dots,V_n)$ the multidegree function takes value $1$ at each element of $\mathrm{multDeg}(X)$. Thus we need to prove that for each multidegree, there is exactly one variety contributing it. However, this is true by the arguments we needed for the set-theoric equality of the multidegrees.
\end{proof}

Note, that our method translate to the case when $\Gamma\subset\faktor{\mathbb{R}^n}{\mathbb{R}\textbf{1}}$, such that we obtain the following corollary.

\begin{Corollary}
\label{cor:gen}
Let $\Gamma\subset\faktor{\mathbb{R}^n}{\mathbb{R}\textbf{1}}$ be a point configuration in tropical general position. Then, we obtain for each $v\in V(\mathrm{tconv}(\Gamma))$
\begin{equation}
\mathrm{multdeg}(\overline{\mathrm{Im}(\widetilde{f_{\Gamma,v}})}=C(v).
\end{equation}

In particular, we obtain

\begin{equation}
\mathrm{multDeg}(\mathcal{M}^r(\Gamma))=\left\{(m_1,\dots,m_n)\in\mathbb{Z}_{\ge0}^n:\;\sum_{i=1}^nm_i=d-1\right\}
\end{equation}.
\end{Corollary}

\subsubsection{Point configurations in arbitrary position}

We now use the theory of stable intersection to deduce Theorem \ref{thm:equ} (2) for arbitrary point configurations.

\begin{proof}[Proof of Theorem \ref{thm:equ} (2) for $\Gamma$ in arbitrary position]
Let $\Gamma=\{v_1,\dots,v_n\}$ be a point configuration in arbitrary position. We slightly pertubate $v_i$ fo $\tilde{v_i}$, such that we obtain a new point configuration $\widetilde{\Gamma}$ in general position. By considering the limit $\widetilde{\Gamma}\to\Gamma$ given by $\tilde{v_i}\to v_i$, we obtain a contraction $\mathrm{tconv}(\widetilde{\Gamma})\to\mathrm{tconv}(\Gamma)$, such that any vertex $v\in \mathrm{tconv}(\Gamma)$ is the limit of several vertices $w_1,\dots,w_l$ in $\mathrm{tconv}(\widetilde{\Gamma})$. We claim that

\begin{align}\label{equ:multstable}\mathrm{multDeg}(\widetilde{f_{\Gamma,v}})=\{C(w_1),\dots,C(w_l)\},
\end{align}

then we have by Corollary \ref{cor:gen}

\begin{align*}
\mathrm{multDeg}(\widetilde{\mathcal{M}}^r(\Gamma))=\mathrm{multDeg}(\widetilde{\mathcal{M}}^r(\widetilde{\Gamma}))&=\left\{(m_1,\dots,m_n)\in\mathbb{Z}_{\ge0}^n:\;\sum_{i=1}^nm_i=d-1\right\}\\
&=\mathrm{multDeg}(\mathcal{M}({\Gamma})_k))
\end{align*}
and together with the same argument about the multidegree function as in the tropical general position case, the theorem follows. Note that 

\begin{align}
\label{equ:union}
\mathrm{multDeg}(\widetilde{\mathcal{M}}^r(\Gamma))=\bigcup_{v\in\mathrm{tconv}(\Gamma)}\mathrm{multDeg}(\widetilde{f_{\Gamma,v}}),
\end{align}

where each $\mathrm{multDeg}(\widetilde{f_{\Gamma,v}})$ is a set of multidegrees as in Equation (\ref{equ:multstable}) and the union is disjoint.\\
Thus, fix $\Gamma$ and $\widetilde{\Gamma}$ as before. Let $v\in\mathrm{tconv}(\Gamma),w_1,\dots,w_l\in\mathrm{tconv}\widetilde{\Gamma}$, such that $w_1,\dots,w_l\to v$ under the contraction $\mathrm{tconv}(\widetilde{\Gamma})\to\mathrm{tconv}(\Gamma)$. Recall, that the tropical convex hull $\mathrm{tconv}(v,v_i)$ consists of a concentation of lines $l_1,\dots,l_k$. We observe that the map $\left(\widetilde{f_{\Gamma,v}}\right)_i$ to the $i-$th factor only depends on the slopes of $l_1,\dots,l_k$. We call the collection of these slopes the \textit{combinatorial type} of $\mathrm{tconv}(v,w_i)$.\par
Under the contraction $\mathrm{tconv}(\widetilde{\Gamma})\to\mathrm{tconv}(\Gamma)$, the tropical convex hulls $\mathrm{tconv}(w_i,\tilde{v_j})$ collapses to $\mathrm{tconv}(v,v_j)$, i.e. the contraction shrinks edges of $\mathrm{tconv}(w_i,\tilde{v_j})$. In particular $\mathrm{tconv}(v,v_j)$ consists at most of as many lines as $\mathrm{tconv}(w_i,\tilde{v_j})$ and the combinatorial type of $\mathrm{tconv}(v,v_j)$ is a subset of the combinatorial type of $\mathrm{tconv}(w_j,\tilde{v_i})$. Thus, we obtain
\[\mathrm{Ker}\left(\widetilde{f_{\Gamma,v}}\right)_i\subset\mathrm{Ker}\left(\widetilde{f_{\widetilde{\Gamma},w_j}}\right)_i\]
for all $i$ and $j$. Also, as $\mathrm{tconv}(w_j,\tilde{v_i})$ collapses to $\mathrm{tconv}(v,v_i)$, there exists a vertex $\tilde{w}\in\mathrm{tconv}(w_j,\tilde{v_i})$, such that the combinatorial type of $\mathrm{tconv}(\tilde{w},\tilde{v_i})$ coincides with the combinatorial type of $\mathrm{tconv}(v,v_i)$. This vertex $\tilde{w}$ is a vertex of $\mathrm{tconv}(\widetilde{\Gamma})$ as well and is contracted to $v$. Thus, there exists $j_i$, such that $\tilde{w}=w_{j_i}$. This yields \[\mathrm{Ker}\left(\widetilde{f_{\Gamma,v}}\right)_i=\mathrm{Ker}\left(\widetilde{f_{\widetilde{\Gamma},w_{j_i}}}\right)_i.\]
We denote by $d_I^v$ and $d_I^{w_i}$ the dimensions of the intersections of the kernels with respect to $\widetilde{f_{\Gamma,v}}$ and $f_{\widetilde{\Gamma},w_i}$ respectively. We see that $d_I^v\le d_I^{w_i}$. Now, let $C(w_i)=(m_1^i,\dots,m_n^i)$. This tuple satisfies the equations $d-\sum_{j\in I}m_j^i>d_I^{v}$. We still have to prove that $M(h)=\emptyset$ for $h>|C(w_i)|$: If $M(h)\neq\emptyset$ for some $h>|C(w_i)|$, then by Theorem \ref{thm:rat} $\mathrm{dim}(\overline{\mathrm{Im}(f_L)})>d$. However, this is a contradiction to $\overline{\mathrm{Im}(\widetilde{f_{\Gamma,[L]}})}$ being contained in the special fiber of the Mustafin variety. Thus we obtain set-theoric equality of the multidegrees. As mentioned before, the multidegree functions coincide follows by the same arguments as in the tropical general position case and the fact that the union in Equation (\ref{equ:union}) is disjoint.
\end{proof}

\begin{Example}
Let $v_1=(0,0,0)$ and $v_2=(0,1,1)$. Let $\Gamma=\{L_1,L_2\}$ be the point configuration with $L_i$ being the lattice corresponding to $v_i$. We compute \[I(\mathcal{M}(\Gamma)_k)=\langle x_{22},x_{32}\rangle\cap\langle x_{11},x_{21}x_{32}-x_{31}x_{22}\rangle,\]
where $x_{1i},x_{2i},x_{3i}$ are the coordinates in the $i-$th factor. We see that $v_1$ contributes the variety corresponding to $\langle x_{22},x_{32}\rangle$ of multidegree $\{(2,0)\}$ and that $v_2$ contributes the variety corresponding to $\langle x_{11},x_{21}x_{32}-x_{31}x_{22}\rangle$ of multidegree $\{(1,1),(0,2)\}$. We want to see these multidegrees in the combinatorics of the tropical convex hull. In order to do this, we pertubate the vertex $v_2$ as illustrated in Figure \ref{fig:pert}, to obtain the point configuration $\Gamma'$ corresponding to $v_1'=(0,0,0)$, $v_2'=(0,1,2)$. We see that $v_1$ is the limit of $v_1'$ and $v_2$ is the limit of $v_2'$ and $(0,1,1)$ by reversing the pertubation. We note that $v_1'$ contributes a variety to $\mathcal{M}(\Gamma')_k$ of multidegree $\{(2,0)\}$, $(0,1,1)$ contributes a variety of multidegree $\{(1,1)\}$ and $v_2'$ contributes a variety of multidegree $\{(0,2)\}$. Then we see as in the proof that the multidegree of a vertex $v$ in $\mathrm{conv}(\Gamma)$ is given by the union of the multidegrees of the vertices whose limit is $v$ by reversing the pertubation.
\end{Example}

\begin{figure}
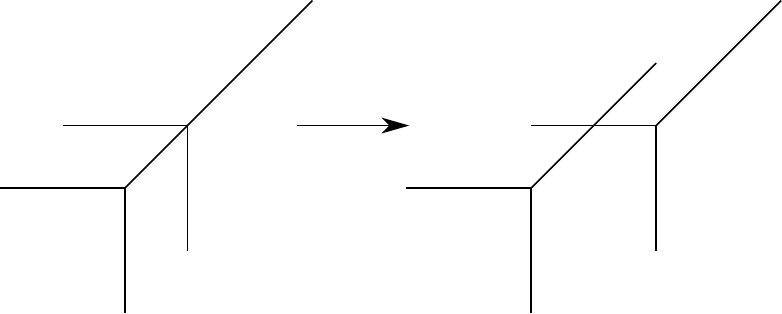
\caption{Pertubating the vertex $v_2$.}
\label{fig:pert}
\end{figure}

\subsection{Classification of the irreducible components of special fibers of Mustafin varieties}
\label{sub:class}
We begin this subsection by proving Theorem \ref{thm:class}.

\begin{proof}[Proof of Theorem \ref{thm:class}]The first part of the Theorem follows from Theorem \ref{thm:equ} and the map in Equation (\ref{equ:rat}). To see the second part, fix a variety $X(k^d;W_1,\dots,W_n)$, where $\bigcap_{i=1}^nW_i=\langle0\rangle$ and fix linear maps $g_i:k^d\to k^d$, such that $\mathrm{ker}(g_i)=W_i$. Fix a reference lattice $L$ and choose invertible $d\times d$ matrices $h_i$ over $K$, such that we obtain $g_i$ from $h_i$ by setting $t=0$. Finally, let $\Gamma=\{h_1^{-1}L,\dots,h_n^{-1}L\}$. By Lemma \ref{lem:con} $\overline{\mathrm{Im}(\widetilde{f_{[L],\Gamma}})}$ is contained in $\mathcal{M}(\Gamma)_k$. Moreover, since $\bigcap_{i=1}^nW_i=\langle0\rangle$ we obtain \[\mathrm{dim}(\overline{\mathrm{Im}(\widetilde{f_{[L],\Gamma}})})=d-1\] and thus by Lemma \ref{lem:con} $X(k^d,W_1,\dots,W_n)$ is an irreducible component as desired.
\end{proof}

Another question raised in \cite{cartwright2011mustafin} is how many irreducible components the special fiber of a Mustafin variety has. In the one apartment case, this count is inherent in the combinatorial data inherent in the tropical convex hull, which was observed in Theorem 4.4 in \cite{cartwright2011mustafin} (and also follows from Theorem \ref{thm:equ} (2)). The next remark comments on the number of irreducible components for arbitrary point configurations.

\begin{Remark}
We note that Theorem \ref{thm:equ} yields a linear algebra algorithm for counting the number of irreducible components of $\mathcal{M}(\Gamma)_k$ for arbitrary point configuration. Namely, computing the convex hull $\mathrm{conv}(\Gamma)$ and computing the number of lattice classes $[L]\in\mathrm{conv}(\Gamma)$, such that $\mathrm{dim}(\mathrm{Im}\left(\widetilde{f_{\Gamma,[L]}}\right)=d-1$, which only depends on the numerical data given by the $d_I$. This number coincides with the number of irreducible components. In the one apartment case, we can express the number of irreducible components as combinatorial data inherent in the tropical convex hull. We believe that a similar description is possible for arbitrary point configurations using the tropical description of convex hulls in a Bruhat-Tits building given in \cite{zbMATH05225139}. Computing the tropical convex hull in the Bergmann fan in the associated matroid, we can perform similar intersection products in the respective tropical linear space. More precisely, the tropical linear space will be of dimension $d-1$. Thus, for a fixed multi-degree $(m_1,\dots,m_n)$, intersecting the linear space with the codimension $m_i$ skeleta at the $i-$th vertices, we will obtain exactly one polyhedral vertex in the associated tropical convex hull. However, proving that the variety associated to the polyhedral vertex contributes in fact an irreducible component with multi-degree $(m_1,\dots,m_n)$ is not as simple as in the one apartment case. More precisely, the first step in computing the dimension of the intersections of the corresponding kernels requires knowing the basis vectors of each lattice, which in general is not as evident from the combinatorial structure as in the one apartment case.
\end{Remark}

\subsection{Mustafin varieties and Computer Vision}
Let $n>2$ and we fix $A_1,\dots,A_n$ be $3\times4$ matrices over $R$. This yields a map

\begin{align*}
f:\mathbb{P}^3_k\xrightarrow{A_1\times\cdots\times A_n}\prod_{i=1}^n\mathbb{P}^2_k.
\end{align*}

We call $\overline{\mathrm{Im}(f)}$ the associated \textit{multiview variety}. This varieties appear in computer vision: Each matrix corresponds to a camera, as \textit{taking a picture} of a geometric object in $\mathbb{P}^3$ corresponds to a linear map from $\mathbb{P}^3$ to $\mathbb{P}^2$. For further literature, we refer to \cite{zbMATH06249021,}.\par 
We obtain the following corollary of theorem \ref{thm:class}, by chossing $n$ generic $1-$dimensional subspaces $V_1,\dots,V_n$ of $\mathbb{P}^3$ and observing that the map $f$ above factorises as

\begin{equation*}
\mathbb{P}^r_k\to\prod_{i=1}^n\mathbb{P}\left(\faktor{k^4}{\mathrm{ker}(A_i)}\right)\xhookrightarrow{A_1\times\cdots\times A_n}\mathbb{P}^2_k
\end{equation*}

\begin{Corollary}
Each multiview variety appears as an irreducible component in the special fiber of a Mustafin variety.
\end{Corollary}

\section{Mustafin varieties and linked Grassmannians}
\label{sec:mulink}
Let $\Gamma\subset\mathfrak{B}_d^0$ be a convex point configuration in one apartment. We associate the following setting, which yields a prelinked Grassmannian.

\begin{enumerate}
\item The base scheme is $R$.
\item We fix a reference lattice $L=R e_1+\cdots+R e_d$.
\item We associate a graph $G=G(\Gamma)=(V(\Gamma),E(\Gamma))$ to $\Gamma$.
\item The vertex set $V(\Gamma)$ of the graph $G$ is given by the set of lattice classes in $\mathrm{conv}(\Gamma)=\Gamma$. For each vertex $v$, we denote the corresponding lattice class by $[L_v]$.
\item The edge set is given by $E(\Gamma)=\{([L_i],[L_j]):\;[L_i]\mathrm{\ is\ adjacent\ to\ }[L_j]\mathrm{\ in\ the\ building}\}$.
\item For each vertex $v$, we pick $g_v\in\mathrm{PGL}(V)$, such that $g_v L=[L_v]$.
\item Let $v,w$ be adjacent vertices. We define $g_{v,w}=g_wg_v^{-1}$. We see $g_{v,w}[L_v]=[L_w],g_{w,v}[L_w]=[L_v]$ and $g_{v,w}g_{w,v}=id$.
\item For each $v,w$, we fix a representative $f_{v,w}\in \mathrm{GL}(V)$ of $g_{v,w}^{-1}$, such that for its matrix representation $A_{v,w}=(a_{ij})_{i,j}$ with respect to $e_1,\dots,e_d$, we have $\mathrm{val}(a_{ij})\ge0$ and for at least one pair $(i',j')$, we have $\mathrm{val}(a_{i'j'})=0$.
\item We fix the vector bundle at each vertex to be $L$.
\item For an edge $e$ from $v$ to $w$, we declare the map to be $f_{v,w}:L\to L$.
\end{enumerate}

We associate a linked Grassmannian to this data.

\begin{Definition}
We denote the prelinked Grassmannian for rank $r$ subbundles of $L$ associated to the above data by $\mathrm{LG}(r,\Gamma)$.
\end{Definition}

\begin{Remark}
We note that while we have some choices in the setting above, the linked Grassmannian does not depend on those choices: As discussed below, the reference lattice $L$ and the maps $g_v$ essentially choose coordinates on the lattices $[L_v]\in\Gamma$.
\end{Remark}

\begin{Example}
We pick an apartment $A$ corresponding to the basis $e_1,\dots,e_3$. Now we take $[L_1],[L_2],[L_3]$ to be the lattice classes corresponding to $(0,0,0),(0,-1,0),(0,-2,-1)$ in the tropical torus corresponding to $A$. This is a tropically convex set, illustrated in Figure \ref{fig:linked}. The linked Grassmannian parametrises rank $1$ sub-bundles of $[L_1],[L_2],[L_3]$. Picking $L_1=Re_1+\cdots+Re_3$ as the reference lattice, the image of $[L_2]$ in $[L_1]$ is isomorphic to the sublattice of $[L_1]$ described by the image of the linear map $[L_1]\to[L_1]$ over $R$ given by the following matrix over \[\begin{pmatrix}
\pi & 0 & 0\\
0 & 1 & 0\\
0 & 0 & \pi
\end{pmatrix}.\]
We will use this description in the proof of Theorem \ref{thm:link} by choosing coordinates on $\mathrm{LG}(1,\Gamma)$ similar to the coordinates on $\mathcal{M}(\Gamma)$. 
\end{Example}

\begin{figure}
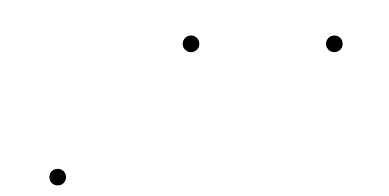
\caption{Graph associated to $(0,0,0),(0,-1,0),(0,-2,-1)$.}
\label{fig:linked}
\end{figure}

We begin by relating Falting's functor for Mustafin varieties (see theorem \ref{thm:falt}) to the linked Grassmannian problem: We fix a convex point configuration $\Gamma$ and a reference lattice $L=Re_1+\cdots+Re_d$. We consider two adjacent lattices $L',L''$, such that $L'\subset L''$. The map $g_{[L'],[L'']}$ in the setting in the beginning of this section is the inclusion map in the coordinates given by $L$.\par 
For two lattices $L',L''\in\Gamma$, we observe that both lattices are contained in a common apartment. As the inclusion operator is transitive, we consider the tropical convex hull between $[L']$ and $[L'']$ and proceed from $[L']$ to $[L'']$ along the adjacent vertices in the tropical convex hull as above, to make the same observation.\par 
Since the maps $f_{v,w}$ along the tropical convex hull in the linked Grassmannian we associated to $\Gamma$ coincide with the inclusion maps, we obtain the following result as a Corollary, interpreting Mustafin varieties as a moduli space in limit linear series theory:

\begin{thm}
\label{thm:falt}
Let $\Gamma$ be a convex point configuration, then $\mathcal{M}(\Gamma)$ represents the linked Grassmannian functor, i.e. \[\mathcal{M}(\Gamma)=\mathrm{LG}(1,\Gamma)\] as schemes.
\end{thm}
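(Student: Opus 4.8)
The plan is to compare the functors of points of the two schemes and invoke Yoneda's lemma. Both sides are closed subschemes of one and the same ambient $R$-scheme $\prod_{L\in\Gamma}\mathbb{P}(L)$: for $\mathcal{M}(\Gamma)$ this is Definition \ref{def:musta}, and for $\mathrm{LG}(1,\Gamma)$ this follows from Definition \ref{def:prelink} (cf.\ Lemma \ref{lem:link}) once one notes that the Grassmannian of rank-$1$ subbundles of a rank-$d$ bundle $\mathcal{E}$ is exactly $\mathbb{P}(\mathcal{E})$ in the ``lines'' convention used throughout the paper. Hence it suffices to show that $\mathcal{M}(\Gamma)$ and $\mathrm{LG}(1,\Gamma)$ cut out the same subfunctor of $S\mapsto\prod_{L\in\Gamma}\mathbb{P}(L)(S)$.

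First I would write down both subfunctors explicitly. By Faltings' theorem above \cite{faltings2001toroidal}, and using the duality between the hyperplane/quotient convention of \emph{loc.\ cit.}\ and the line/subbundle convention here (remarked upon before that theorem), $\mathcal{M}(\Gamma)(S)$ is the set of tuples $(\mathcal{N}_L)_{L\in\Gamma}$ of line subbundles $\mathcal{N}_L\subset L\otimes_R\mathcal{O}_S$ such that for every inclusion $L_i\subseteq L_j$ of lattices of $\Gamma$ the induced map $L_{i,S}\hookrightarrow L_{j,S}$ carries $\mathcal{N}_{L_i}$ into $\mathcal{N}_{L_j}$. On the other hand, by Definition \ref{def:prelink}, $\mathrm{LG}(1,\Gamma)(S)$ is the set of tuples $(\mathcal{F}_v)_{v\in V(\Gamma)}$ of rank-$1$ subbundles of the $\mathcal{E}_{v,S}$ with $f_e(\mathcal{F}_v)\subset\mathcal{F}_{v'}$ for every edge $e$ of $G(\Gamma)$ pointing from $v$ to $v'$; here $V(\Gamma)=\Gamma$, the bundle at a vertex is the lattice (class) there, and, by the Remark following the construction of $\mathrm{LG}(1,\Gamma)$, the edge maps $f_e$ are exactly the lattice-inclusion maps between adjacent classes (up to homothety).

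The core of the argument is then the equivalence of these two systems of linear incidence conditions on a common tuple $(\mathcal{N}_L)=(\mathcal{F}_L)$. One implication is immediate: since every edge map of $G(\Gamma)$ is, up to a unit, the inclusion of a lattice of $\Gamma$ into an adjacent one, a tuple satisfying the $\mathcal{M}(\Gamma)$-condition a fortiori satisfies the $\mathrm{LG}(1,\Gamma)$-condition. For the converse I would fix an inclusion $L_i\subseteq L_j$ of representatives of lattices of $\Gamma$ and exploit convexity of $\Gamma$ together with the one-apartment hypothesis to interpolate a chain $L_i=M_0\subset M_1\subset\dots\subset M_k=L_j$ in which each $M_t$ lies in $\Gamma$ and $M_t$ is adjacent to $M_{t+1}$: such a chain exists because the staircase lattices $[\pi^aL_i\cap\pi^bL_j]$ are forced into $\Gamma$ by Definition \ref{def:conv} and form a path of adjacent classes, and the composition of the edge maps along it equals the inclusion $L_i\hookrightarrow L_j$ up to a unit. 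Thus compatibility with all edge maps propagates to compatibility with $L_i\subseteq L_j$. (The same interpolation also shows that the path-coherence hypotheses of Definition \ref{def:prelink} genuinely hold for $G(\Gamma)$ — any two directed paths with common endpoints compose, up to powers of $\pi$, to the same map of lattices — so that $\mathrm{LG}(1,\Gamma)$ is a bona fide prelinked Grassmannian.)

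The step I expect to be the main obstacle is not the combinatorial interpolation but the bookkeeping with homothety classes: the ``vector bundle'' at a vertex is only a lattice up to scaling and each edge map is only defined up to a scalar, so one must verify — as asserted in the Remark following the construction — that the submodule conditions above, and hence the functor $\mathrm{LG}(1,\Gamma)(-)$, are independent of all these choices, and correspondingly that the identification of the two ambient products is canonical. Once this is pinned down, the equivalence of the two subfunctors follows from the previous paragraph, and Yoneda's lemma yields $\mathcal{M}(\Gamma)=\mathrm{LG}(1,\Gamma)$ as closed subschemes of $\prod_{L\in\Gamma}\mathbb{P}(L)$, hence as $R$-schemes.
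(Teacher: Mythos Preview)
Your proposal is correct and follows the same route as the paper: identify the two functors via Faltings' moduli description and the observation that the edge maps of $G(\Gamma)$ are exactly the lattice inclusions (up to scalars). The paper compresses this into a single sentence (``since the maps \ldots\ coincide with the inclusion maps, we obtain the result as a corollary''), whereas you spell out both inclusions of subfunctors, the interpolation by adjacent staircase lattices supplied by convexity, and the homothety bookkeeping; this added care is useful but does not constitute a different argument.
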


Moreover, we have thus found a class of reduced and flat linked Grassmannians.

\begin{Corollary}
Let $\Gamma$ be an arbitrary point configuration. The associated linked Grassmannians $\mathrm{LG}(1,\Gamma)$ is a flat scheme with reduced fibers.
\end{Corollary}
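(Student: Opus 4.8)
The plan is to reduce the statement to the convex case, where Theorem~\ref{thm:falt} identifies the linked Grassmannian with a Mustafin variety, and then to read off the assertion from the structural results of Proposition~\ref{prop:must}~(1).

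First I would observe that the prelinked Grassmannian constructed at the start of Section~\ref{sec:mulink} depends on $\Gamma$ only through its convex hull: the underlying graph $G(\Gamma)$, the vector bundles (the lattice classes) at its vertices, and the linking maps along its edges are all extracted from the lattice classes in $\mathrm{conv}(\Gamma)$ together with the adjacency relation in $\mathfrak{B}_d$. Hence, by definition, the linked Grassmannian associated to an arbitrary configuration $\Gamma$ is $\mathrm{LG}(1,\Gamma)=\mathrm{LG}(1,\mathrm{conv}(\Gamma))$. Since the convex hull operation is idempotent, $\mathrm{conv}(\Gamma)$ is itself a convex point configuration; moreover, if $\Gamma$ lies in one apartment then so does $\mathrm{conv}(\Gamma)$, because for $L,L'$ diagonal in a common basis the lattice $\pi^aL\cap\pi^bL'$ is again diagonal in that basis, so that the prelinked Grassmannian conditions of Definition~\ref{def:prelink} are verified exactly as in the original construction. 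Thus Theorem~\ref{thm:falt} is applicable to $\mathrm{conv}(\Gamma)$.

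Next, Theorem~\ref{thm:falt} yields an isomorphism of $R$-schemes $\mathrm{LG}(1,\mathrm{conv}(\Gamma))\cong\mathcal{M}(\mathrm{conv}(\Gamma))$. By Proposition~\ref{prop:must}~(1), the Mustafin variety $\mathcal{M}(\mathrm{conv}(\Gamma))$ is flat and projective over $R$, its generic fiber is isomorphic to $\mathbb{P}^{d-1}_{\mathbb{K}}$ (in particular smooth, hence reduced), and its special fiber is reduced. Transporting these properties across the isomorphism shows that $\mathrm{LG}(1,\Gamma)$ is flat over $R$ with reduced fibers, which is the claim. (Alternatively, one could check the generic fiber directly: over $K$ every linking map becomes an isomorphism, so every point of the generic fiber is determined by a single subbundle, identifying it with a Grassmannian, which is smooth; but routing through Proposition~\ref{prop:must}~(1) is cleaner.)

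The only genuine bookkeeping — and the step I expect to require the most care — is the first one: confirming that the construction of the prelinked Grassmannian is indeed insensitive to replacing $\Gamma$ by $\mathrm{conv}(\Gamma)$, and, if one wishes to treat configurations not contained in a single apartment, checking that the graph, the linking maps, and the compatibility hypotheses of Definition~\ref{def:prelink} still make sense for an arbitrary convex configuration. Once this is settled the conclusion is immediate from Theorem~\ref{thm:falt} and Proposition~\ref{prop:must}~(1), with no further computation needed.
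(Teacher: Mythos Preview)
Your proposal is correct and follows essentially the same route as the paper, which gives no explicit proof beyond the sentence preceding the Corollary: the result is meant to be an immediate consequence of Theorem~\ref{thm:falt} combined with Proposition~\ref{prop:must}~(1). Your only addition is making explicit the observation that the construction of $\mathrm{LG}(1,\Gamma)$ depends only on $\mathrm{conv}(\Gamma)$ (visible already in item~(2a) of the construction, where the vertex set is $\mathrm{conv}(\Gamma)$), which is exactly the bridge needed to pass from ``arbitrary'' to ``convex'' and invoke Theorem~\ref{thm:falt}; the paper leaves this step implicit.
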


Most results on (pre)linked Grassmannians required the study of the simple points as elaborated in Section \ref{sec:pre}. Using Theorem \ref{thm:equ}, we can relate the simple points to Mustafin varieties. We note that it is clear that the simple points are dense in the generic fiber, since the maps between vertices over the generic fiber are isomorphisms. In the next subsection, we prove that the simple points are dense in the special fiber as well.

\subsection{Mustafin varieties and simple points}
\subsubsection{A motivating example: Two lattices}
\label{subsec:mot}
A first example for Theorem \ref{thm:falt} was given in Warning A.16 in \cite{osserman2006limit}. We pick $S=\mathrm{Spec}(k),n=d=2,r=1$. (Note that this corresponds to picking any two adjacent lattices in dimension $2$ as each two adjacent lattices will yield the same maps.) Thus the vector space associated to $v_1$ and $v_2$ is $k^2$ with maps $f_{(v_1,v_2)}=\begin{pmatrix}
1 & 0\\
0 & 0
\end{pmatrix}$ and $f_{(v_2,v_1)}=\begin{pmatrix}
0 & 0\\
0 & 1
\end{pmatrix}$. Thus the linked Grassmannian consists of points $(V_1=\left\langle\begin{pmatrix}X_0\\X_1\end{pmatrix}\right\rangle,V_2=\left\langle\begin{pmatrix}Y_0\\Y_1\end{pmatrix}\right\rangle)$, such that $f_1(V_1)\subset V_2$ and vice versa. The condition of being linked is given by $X_0Y_1=0$. In fact $\mathrm{LG}(1,\Gamma)_k$ is scheme-theoretically cut out by this equation in $\mathrm{P}^1\times\mathrm{P}^1$, which translates to a pair of $\mathbb{P}^1$'s attached at $X_0=Y_1=0$ (which is the only not exact point).\\
Given $\Gamma$ as mentioned above, we can compute the special fiber of the Mustafin variety, which by Theorem \ref{thm:equ} is given by $\overline{\mathrm{Im}(f_1)}\cup\overline{\mathrm{Im}(f_2)}$ for $f_{(v_1,v_2)}:\mathbb{P}^1\dashrightarrow \mathbb{P}^1\times \mathbb{P}^1$ given by \[\begin{pmatrix}
1 & 0\\
0 & 1
\end{pmatrix}\times \begin{pmatrix}
1 & 0\\
0 & 0
\end{pmatrix}\]
and $f_{(v_2,v_1)}:\mathbb{P}^1\dashrightarrow \mathbb{P}^1\times \mathbb{P}^1$  given by \[\begin{pmatrix}
0 & 0\\
0 & 1
\end{pmatrix}\times \begin{pmatrix}
1 & 0\\
0 & 1
\end{pmatrix}.\]
Thus, by Theorem \ref{thm:equ}, $\mathcal{M}(\Gamma)_k$ is a pair of $\mathbb{P}^1$s attached at $X_0=Y_1=0$ as well and we can conclude $\mathrm{LG}(1,\Gamma)_k=\mathcal{M}(\Gamma)_k$ in this case.
\subsubsection{Special fibers of Mustafin Varieties and prelinked Grassmannians}
The key step in proving Theorem \ref{thm:link} is the following proposition.
\begin{Proposition}
 \label{thm:equlinked}
 Let $\Gamma$ be a convex point configuration. Then the closure of the locus of simple points in $\mathrm{LG}(1,\Gamma)_k$ is set-theoretically given by the special fiber of the Mustafin varieties:
 \[\overline{\mathrm{Simp}(\Gamma)}=\mathcal{M}(\Gamma)_k.\]
\end{Proposition}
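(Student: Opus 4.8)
The plan is to establish the set-theoretic equality $\overline{\mathrm{Simp}(\Gamma)} = \mathcal{M}(\Gamma)_k$ by comparing the irreducible components on both sides, using Theorem \ref{thm:equ} (2) to parametrise the components of $\mathcal{M}(\Gamma)_k$ and Proposition \ref{prop:facts} (v) (generalised via \cite{osserman2014limit} to the prelinked setting) to parametrise the components of $\mathrm{LG}(1,\Gamma)_k$. First I would recall from Theorem \ref{thm:falt} that $\mathcal{M}(\Gamma) = \mathrm{LG}(1,\Gamma)$ as schemes, so the two special fibers are literally equal; hence the content of the proposition is really the identification of $\overline{\mathrm{Simp}(\Gamma)}$ with all of $\mathcal{M}(\Gamma)_k$, i.e. that the non-simple locus is nowhere dense, equivalently that every irreducible component of $\mathcal{M}(\Gamma)_k$ contains a simple point. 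Since $\mathcal{M}(\Gamma)_k$ is reduced (Proposition \ref{prop:must} (1)), once we know $\overline{\mathrm{Simp}(\Gamma)}$ meets every component in a dense subset we are done.

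The key step is to show that for each lattice class $[L] \in \mathrm{conv}(\Gamma) = \Gamma$, the component $X_{\Gamma,[L]} = \overline{\mathrm{Im}(\widetilde{f_{\Gamma,[L]}})}$ of $\mathcal{M}(\Gamma)_k$ contains a simple point of $\mathrm{LG}(1,\Gamma)_k$. To see this, recall from Construction \ref{con:var} (and the accompanying remark) that the projection of $X_{\Gamma,[L]}$ to the factor $\mathbb{P}(L)_k$ is the identity map, so a general point $\xi \in X_{\Gamma,[L]}$ is determined by a line $\ell \subset L \otimes k$ together with its images $f_P(\ell)$ under the maps along the tropical convex hull; concretely, the collection of subbundles $(F_v)_{v \in V(\Gamma)}$ attached to $\xi$ satisfies $F_v = \widetilde{f_{\Gamma,[L],v}}(\ell)$ for $\ell$ in the complement of $\mathrm{Ker}(\widetilde{f_{\Gamma,[L],v}})$. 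Taking $r=1$, simplicity of such a point just requires a single vertex $v_1$ (take $v_1 = [L]$) and a section $s_1 \in F_{[L]} = \ell$ such that for every $v \in V(G)$ there is a path $P$ from $[L]$ to $v$ with $f_P(s_1)$ a basis of $F_v$; since all these bundles are lines, this is the condition $f_P(s_1) \neq 0$, which holds precisely when $\ell$ avoids the relevant kernels — an open dense condition on $\ell \in \mathbb{P}(L)_k = X_{\Gamma,[L]}$. Hence a dense open subset of each component consists of simple points, so $\overline{\mathrm{Simp}(\Gamma)} \supseteq X_{\Gamma,[L]}$ for all $[L]$, and by Theorem \ref{thm:equ} (2) the union of these is $\mathcal{M}(\Gamma)_k$. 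The reverse inclusion $\overline{\mathrm{Simp}(\Gamma)} \subseteq \mathrm{LG}(1,\Gamma)_k = \mathcal{M}(\Gamma)_k$ is trivial.

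The main obstacle I expect is being careful about the passage between the ``linked chain'' picture along a single apartment (where $G$ is essentially a poset of lattices and the maps $f_P$ for paths with common endpoints agree up to the scalar of Definition \ref{def:prelink}) and the genuinely two-dimensional combinatorics of a convex configuration in one apartment, where $G(\Gamma)$ is the adjacency graph of a tropical polytope rather than a path. One must check that the characterisation of simple points being used — an $r$-tuple of vertices with sections whose images trivialise every $F_v$ — is correctly applied when $r=1$ and that the choice $v_1 = [L]$ really does reach every other vertex by a directed path realising $\widetilde{f_{\Gamma,[L],v}}$; this is where the explicit description of paths in $\mathrm{tconv}(\Gamma)$ from Proposition \ref{prop:tropline} and the compatibility noted in the remark after the definition of $\mathrm{LG}(1,\Gamma)$ (that the edge maps coincide with the maps of Construction \ref{con:var} with $[L_i]$ as reference lattice) enter. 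A secondary point requiring attention is that the proposition is stated only set-theoretically, so I do not need to match scheme structures beyond what Theorem \ref{thm:falt} already gives; it suffices to track which lines $\ell$ give simple points and verify that locus is dense in $\mathbb{P}(L)_k$ for each $[L] \in \Gamma$.
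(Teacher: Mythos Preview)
Your proposal is correct and follows essentially the same route as the paper: for each $[L]\in\Gamma$ you identify the simple points rooted at $[L]$ with the image of the rational map $\widetilde{f_{\Gamma,[L]}}$, and then apply Theorem~\ref{thm:equ} to conclude. The only cosmetic differences are that the paper establishes the equality $\mathrm{Simp}(\Gamma)=\bigcup_{[L]\in\Gamma}\mathrm{Im}(\widetilde{f_{\Gamma,[L]}})$ directly (so it does not need to invoke Theorem~\ref{thm:falt} for the reverse inclusion) and applies part~(1) of Theorem~\ref{thm:equ} rather than part~(2), which is the more natural citation since convexity gives $\mathrm{conv}(\Gamma)=\Gamma$.
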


\begin{proof}
The statement is about the special fiber, thus we base change to $\mathrm{Spec}(k)$. Moreover, we can choose coordinates on the linked Grassmannian as for Mustafin varieties by performing a linear coordinate change \[\mathrm{LG}(1,\Gamma)\subset\mathbb{P}(L_1)\times\dots\times\mathbb{P}(L_n)\xrightarrow{g_1\times\dots\times g_n}\mathbb{P}(L)\times\dots\times\mathbb{P}(L)\] for a reference lattice $L$ and linear maps $g_i$, such that $g_iL=L_i$. We observe that the maps between adjacent lattices over $k$ coincide with the respective maps in Construction \ref{con:var}. First, we have to check that $G=G(\Gamma)$ together with the associated data at the vertices and edges actually satisfies the conditions in Definition \ref{def:prelink}. The only thing to check is the condition on the paths. Let $\widetilde{g_P}$ be the map obtained by the composition of maps corresponding to edges along the path $P$.\\
We make the following claim from which the required conditions follow immidiately:
\begin{Lemma}
Let $P$ be a path and $P'$ a minimal path between $[L]$ and $[L']$. Then we have \[g_P=g_{P'}\textit{ or }g_{P}\equiv0.\]
\end{Lemma}
\begin{proof}
In order to prove this claim, we consider the maps before base changing. As maps over $R$, we observe that there exists $n\in\mathbb{N}$, such that \[\pi^{n}g_P=g_{P'}.\] If $n=0$, we obtain $g_P=g_{P'}$ over the special fiber and $g_P\equiv0$ if $n>0$. Therefore, $G=G(\Gamma)$ satisfies the conditions in Definition \ref{def:prelink}.
\end{proof}

In order to link the simple points to the special fiber, we need the following lemma.

\begin{Lemma}
The tropical convex hull between two lattice classes $[L]$ and $[L']$ is a minimal path.
\end{Lemma}

In the rank $1$ case, simple points translate to the following situation: Let $(\mathcal{F}_v)_{v\in V(G)}$ be a simple point. Then there exists a $v'\in V(G)$, such that taking minimal paths $P_{v',v}$ from $v'$ to $v\in V(G)$ for each $v$, we obtain $\mathcal{F}_{v}=f_{P_{v',v}}(\mathcal{F}_v')$ and we say $(\mathcal{F}_v)_{v\in V(G)}$ is \textit{rooted} at $v'$. Thus we can classify the set $\mathrm{Simp}(\Gamma)_{v'}$ of all simple points rooted at $v'$ as the image of the following rational map\[\begin{tikzcd}g_{v'}:\mathbb{P}_k^{d-1} \arrow[rr,dashed, "(f_{P_{v',v}})_{v\in V(G)}"]& &\left(\mathbb{P}_k^{d-1}\right)^{\left|\Gamma\right|},\end{tikzcd}\]
since $\mathbb{P}_k^{d-1}$ parametrises one dimensional subvectorspaces of $k^d$. Moreover, we see immediately that \[\mathrm{Simp}(\Gamma)=\bigcup_{v\in V(G)}\mathrm{Simp}(\Gamma)_{v'}.\] It is easy to see, that the map $g_{v'}$ coincides with the map $\widetilde{f_{\Gamma,[L']}}$ constructed in Construction \ref{con:var}, where $[L']$ is the lattice class corresponding to $v'$. Therefore, we obtain \[\mathrm{Simp}(\Gamma)=\bigcup_{v\in V(\Gamma)}\mathrm{Simp}(\Gamma)_v=\bigcup_{[L]\in \mathrm{conv}(\Gamma)} \mathrm{Im}(\widetilde{f_{\Gamma,[L]}})\] and by taking the closure and applying Theorem \ref{thm:equ} we obtain \[\overline{\mathrm{Simp}(\Gamma)}=\mathcal{M}(\Gamma)_k\] as desired.
\end{proof}

Since $\mathcal{M}(\Gamma)=\mathrm{LG}(1,\Gamma)$ by Theorem \ref{thm:falt}, we see that $\mathrm{LG}(1,\Gamma)_k=\mathcal{M}(\Gamma)_k=\overline{\mathrm{Simp}(\Gamma)}$, which proves Theorem \ref{thm:link}.

\section{Mustafin varieties and local models of Shimura varieties}
\label{sec:shi}
We interpret the standard local model $M^{loc}$ described in subsection \ref{subsec:shim} as a Mustafin variety by the following theorem:

\begin{thm}
\label{thm:shim}
Let $\Gamma=\{L_0,\dots,L_{d-1}\}$ the configuration chosen in subsection \ref{subsec:shim}, then \[\mathcal{M}(\Gamma)= M^{loc},\] where $M^{loc}$ is the local model for $r=1$.
\end{thm}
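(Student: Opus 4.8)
The plan is to show that $\mathcal{M}(\Gamma)$ and $M^{loc}$ represent the same moduli functor: we already have Faltings' functorial description of $\mathcal{M}(\Gamma)$ (the theorem of \cite{faltings2001toroidal} recalled above, sharpened by Theorem \ref{thm:falt}), and $M^{loc}$ is defined in Subsection \ref{subsec:shim} as a moduli functor, so it suffices to identify the two. I would first record that $\Gamma=\{L_0,\dots,L_{d-1}\}$ is a convex point configuration lying in one apartment with $\mathrm{conv}(\Gamma)=\Gamma$. All the $L_i$ are diagonal in $e_1,\dots,e_d$, hence lie in the apartment attached to this basis, and their exponent vectors are $\varepsilon^i=(1,\dots,1,0,\dots,0)$ with $i$ leading ones. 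For $i\le j$ and $a,b\in\mathbb{Z}$ the exponent vector of $\pi^aL_i\cap\pi^bL_j$ is the coordinatewise maximum of $a\textbf{1}+\varepsilon^i$ and $b\textbf{1}+\varepsilon^j$; a short case analysis on the sign of $a-b$ shows this equals $c\textbf{1}+\varepsilon^i$ or $c\textbf{1}+\varepsilon^j$ with $c=\max(a,b)$, so $[\pi^aL_i\cap\pi^bL_j]\in\{[L_i],[L_j]\}\subseteq\Gamma$. Thus $\Gamma$ is convex in the sense of Definition \ref{def:conv}, $\mathrm{conv}(\Gamma)=\Gamma$, and Theorem \ref{thm:falt} applies: $\mathcal{M}(\Gamma)=\mathrm{LG}(1,\Gamma)$ represents the functor sending an $R$-scheme $S$ to the set of tuples $(l(L_i))_{i=0}^{d-1}$ of rank-one subbundles $l(L_i)\subseteq L_{i,S}$ compatible with the maps between adjacent lattice classes of $\mathrm{conv}(\Gamma)$.

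The second step is to identify this with $M^{loc}$ at $r=1$. The periodic lattice chain attached to $\Gamma$, namely $\dots\supseteq\pi L_0\supseteq L_{d-1}\supseteq\dots\supseteq L_1\supseteq L_0\supseteq\pi^{-1}L_{d-1}\supseteq\dots$ built from $L_0\supseteq\dots\supseteq L_{d-1}$ via homothety, is — after matching the indexing and normalisation conventions of \cite{gortz2001flatness} — exactly the chain appearing in Subsection \ref{subsec:shim}, and the maps in the defining diagram of $M^{loc}$ are the honest inclusions $L_{i+1}\hookrightarrow L_i$ together with the twisted wrap-around $\pi L_0\hookrightarrow L_{d-1}$ (equivalently $L_0\xrightarrow{\pi}L_{d-1}$). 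On the Mustafin side one checks directly that $[L_i]$ is adjacent to $[L_{i+1}]$ via $\pi L_i\subseteq L_{i+1}\subseteq L_i$, and that $[L_{d-1}]$ is adjacent to $[L_0]$ via $\pi L_0\subseteq L_{d-1}\subseteq L_0$, and that the maps attached to these edges in Construction \ref{con:var} and Section \ref{sec:mulink} are precisely those (possibly $\pi$-twisted) inclusions. Hence every compatibility condition of $M^{loc}$ appears among the conditions on $(l(L_i))_i$. Conversely, $\Gamma$ has further building adjacencies — its adjacency graph is in fact the complete graph on $d$ vertices — but these impose nothing new: for a rank-one subbundle a condition ``$f(\mathcal{F})\subseteq\mathcal{G}$'' depends on $f$ only up to an element of $R$, and every such extra edge-map equals a power of $\pi$ times the composite of chain maps along a path in the periodic chain, whose condition already follows from the chain conditions because $\mathcal{F}\subseteq\mathcal{G}$ forces $\pi^m\mathcal{F}\subseteq\pi^m\mathcal{G}\subseteq\mathcal{G}$. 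The two functors therefore cut out the same subfunctor of $\prod_{i=0}^{d-1}\mathbb{P}(L_i)$, giving $\mathcal{M}(\Gamma)=M^{loc}$.

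\textbf{Main obstacle.} The real work is the bookkeeping in the second step: reconciling the sub-bundle conventions used here with the quotient-bundle (dual projective space) conventions of \cite{faltings2001toroidal} and \cite{gortz2001flatness}, pinning down the orientation of the lattice chain and the location of the $\pi$-twist, and verifying — at the level of the functor of points, not merely set-theoretically on the special fibre — that the surplus adjacencies of $\Gamma$ in the building produce no new equations. None of these is deep, but keeping them mutually consistent is delicate; the $d=2$ computation of Subsection \ref{subsec:mot}, where $\mathrm{LG}(1,\Gamma)_k=\mathcal{M}(\Gamma)_k$ is made explicit, is the prototype for the general check.
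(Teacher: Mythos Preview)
Your proposal is correct and follows essentially the same route as the paper: reduce via Theorem \ref{thm:falt} to showing $M^{loc}=\mathrm{LG}(1,\Gamma)$ as functors, then argue that the chain conditions of $M^{loc}$ and the adjacency conditions of $\mathrm{LG}(1,\Gamma)$ are equivalent because every non-chain map is a composite (up to a $\pi$-power) of the chain maps. Your write-up is in fact more careful than the paper's in two respects: you explicitly verify that $\Gamma$ is convex (which the paper tacitly assumes in invoking Theorem \ref{thm:falt}), and you spell out that the adjacency graph is complete and why the extra edges impose no new conditions, whereas the paper only treats the factorisation $L_i\to L_{i+1}\to\cdots\to L_j$ in one direction.
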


\begin{Remark}
By a similar argument $M^{loc}$ is shown to coincide with a different class of linked Grassmannian for rank $r$ subbundles in \cite{hwang} .
\end{Remark}

\begin{proof}
We prove that $M^{loc}=\mathrm{LG}(1,\Gamma)$ and follow with Theorem \ref{thm:falt} that $M^{loc}=\mathcal{M}(\Gamma)$. Fix an $R-$scheme $S$ and let $(\mathcal{F}_0,\dots,\mathcal{F}_{d-1})$ be a tuple of rank $1$ sub-bundles of $(L_{1,S},\dots,L_{d-1,S})$ parametrised by $\mathrm{LG}(1,\Gamma)$. Since the maps \[L_0\to\cdots\to L_{d-1}\] appear in underlying graph of the linked Grassmannian associated to $\Gamma$, we see that the tuple $(\mathcal{F}_0,\dots,\mathcal{F}_{d-1})$ is parameterised by $M^{loc}$ as well. For the other direction, let $(\mathcal{F}_0,\dots,\mathcal{F}_{d-1})$ be a tuple as above parametrised by $M^{loc}$. In order to see that this tuple is parametrised by $\mathrm{LG}(1,\Gamma)$, we need to prove that for $i,j\in\{0,\dots,d-1\}$, the map $L_i\to L_j$ maps $\mathcal{F}_i$ to $\mathcal{F}_j$. However, the map $L_i\to L_j$ is given by the composition $L_i\to L_{i+1}\cdots\to L_{j-1}\to L_j$. Since the tuple $(\mathcal{F}_i)_i$ is parametrised by $M^{loc}$ the chain of maps $\mathcal{F}_i\to\mathcal{F}_{i+1}\to\cdots\mathcal{F}_{j-1}\to\mathcal{F}_j$ is well defined, $\mathcal{F}_i$ is mapped to $\mathcal{F}_j$ as well.
\end{proof}

Since Mustafin varieties are flat with reduced fibers by Proposition \ref{prop:must}, we can use Theorem \ref{thm:shim} to obtain a new proof of Theorem \ref{thm:gortz} for $r=1$.

\bibliographystyle{alpha}
\bibliography{literature.bib}

\begin{thebibliography}{{Mum}72}

\bibitem[AB08]{buldingsbook}
Peter {Abramenko} and Kenneth~S. {Brown}.
\newblock {\em {Buildings. Theory and applications.}}
\newblock Berlin: Springer, 2008.

\bibitem[ABGJ15]{zbMATH06514534}
Xavier {Allamigeon}, Pascal {Benchimol}, St\'ephane {Gaubert}, and Michael
  {Joswig}.
\newblock {Tropicalizing the simplex algorithm.}
\newblock {\em {SIAM J. Discrete Math.}}, 29(2):751--795, 2015.

\bibitem[AR10]{allermann2010first}
Lars Allermann and Johannes Rau.
\newblock First steps in tropical intersection theory.
\newblock {\em Math. Z.}, 264(3):633--670, 2010.

\bibitem[AST13]{zbMATH06249021}
Chris {Aholt}, Bernd {Sturmfels}, and Rekha {Thomas}.
\newblock {A Hilbert scheme in computer vision.}
\newblock {\em {Can. J. Math.}}, 65(5):961--988, 2013.

\bibitem[CHSW11]{cartwright2011mustafin}
Dustin Cartwright, Mathias H{\"a}bich, Bernd Sturmfels, and Annette Werner.
\newblock {Mustafin Varieties}.
\newblock {\em Selecta Math.}, 17(4):757--793, 2011.

\bibitem[CLZ16]{castillo2016representable}
Federico Castillo, Binglin Li, and Naizhen Zhang.
\newblock {Representable Chow classes of a product of projective spaces}.
\newblock {\em arXiv preprint arXiv:1612.00154}, 2016.

\bibitem[CS10]{cartwright2010hilbert}
Dustin Cartwright and Bernd Sturmfels.
\newblock {The Hilbert scheme of the diagonal in a product of projective
  spaces}.
\newblock {\em IMRN}, 2010(9):1741--1771, 2010.

\bibitem[DS04]{develin2004tropical}
Mike Develin and Bernd Sturmfels.
\newblock Tropical convexity.
\newblock {\em Doc. Math}, 9:1--27, 2004.

\bibitem[DY07]{zbMATH05246540}
Mike {Develin} and Josephine {Yu}.
\newblock {Tropical polytopes and cellular resolutions.}
\newblock {\em {Exp. Math.}}, 16(3):277--291, 2007.

\bibitem[EO13]{MR3031570}
Eduardo Esteves and Brian Osserman.
\newblock Abel maps and limit linear series.
\newblock {\em Rend. Circ. Mat. Palermo (2)}, 62(1):79--95, 2013.

\bibitem[Fal01]{faltings2001toroidal}
Gerd Faltings.
\newblock Toroidal resolutions for some matrix singularities.
\newblock In {\em Moduli of Abelian Varieties}, pages 157--184. Springer, 2001.

\bibitem[G{\"o}r01]{gortz2001flatness}
Ulrich G{\"o}rtz.
\newblock {On the flatness of models of certain Shimura varieties of PEL-type}.
\newblock {\em Math. Ann.}, 321(3):689--727, 2001.

\bibitem[HL]{hwang}
Brian Hwang and Binglin Li.
\newblock {Linked Grassmannians and local models of Shimura varieties:
  Unramified type A}.
\newblock {\em In preparation}.

\bibitem[HO08]{helm2008flatness}
David Helm and Brian Osserman.
\newblock {Flatness of the linked Grassmannian}.
\newblock {\em Proceedings of the American Mathematical Society},
  136(10):3383--3390, 2008.

\bibitem[JSY07]{zbMATH05225139}
Michael {Joswig}, Bernd {Sturmfels}, and Josephine {Yu}.
\newblock {Affine buildings and tropical convexity.}
\newblock {\em {Albanian J. Math.}}, 1(4):187--211, 2007.

\bibitem[JY16]{jensen2016stable}
Anders Jensen and Josephine Yu.
\newblock Stable intersections of tropical varieties.
\newblock {\em Journal of Algebraic Combinatorics}, 43(1):101--128, 2016.

\bibitem[Kat12]{katz2012tropical}
Eric Katz.
\newblock Tropical intersection theory from toric varieties.
\newblock {\em Collect. Math.}, 63(1):29--44, 2012.

\bibitem[KT06]{keel2006geometry}
Sean Keel and Jenia Tevelev.
\newblock {Geometry of Chow quotients of Grassmannians}.
\newblock {\em Duke Math.}, 134(2):259--311, 2006.

\bibitem[Li17]{Li13}
Binglin Li.
\newblock Images of rational maps of projective spaces.
\newblock {\em IMRN}, 2017.

\bibitem[MS15]{MR3287221}
Diane Maclagan and Bernd Sturmfels.
\newblock {\em Introduction to tropical geometry}, volume 161 of {\em Graduate
  Studies in Mathematics}.
\newblock American Mathematical Society, Providence, RI, 2015.

\bibitem[{Mum}72]{zbMATH03362020}
David {Mumford}.
\newblock {An analytic construction of degenerating curves over complete local
  rings.}
\newblock {\em {Compos. Math.}}, 24:129--174, 1972.

\bibitem[Mus78]{mustafin1978nonarchimedean}
G.A. Mustafin.
\newblock Nonarchimedean uniformization.
\newblock {\em Mathematics of the USSR-Sbornik}, 34(2):187, 1978.

\bibitem[Oss06]{osserman2006limit}
Brian Osserman.
\newblock A limit linear series moduli scheme.
\newblock In {\em Annales de l'institut Fourier}, volume~56, pages 1165--1205,
  2006.

\bibitem[Oss14]{osserman2014limit}
Brian Osserman.
\newblock Limit linear series moduli stacks in higher rank.
\newblock {\em arXiv preprint arXiv:1405.2937}, 2014.

\bibitem[PRS13]{PRSLocal}
Georgios {Pappas}, Michael {Rapoport}, and Brian {Smithling}.
\newblock {Local models of Shimura varieties, I. Geometry and combinatorics.}
\newblock In {\em {Handbook of moduli. Volume III}}, pages 135--217.
  Somerville, MA: International Press; Beijing: Higher Education Press, 2013.

\bibitem[{Rau}16]{rau2008intersections}
Johannes {Rau}.
\newblock {Intersections on tropical moduli spaces.}
\newblock {\em {Rocky Mt. J. Math.}}, 46(2):581--662, 2016.

\bibitem[Sha13]{shaw2013tropical}
Kristin~M Shaw.
\newblock A tropical intersection product in matroidal fans.
\newblock {\em SIAM Journal on Discrete Mathematics}, 27(1):459--491, 2013.

\bibitem[{Wer}11]{zbMATH05900629}
Annette {Werner}.
\newblock {A tropical view on Bruhat-Tits buildings and their
  compactifications.}
\newblock {\em {Cent. Eur. J. Math.}}, 9(2):390--402, 2011.

\end{thebibliography}
\Addresses
\end{document}